\theoremstyle{plain}
\newtheorem*{thm*}{Theorem}
\newtheorem{cor}{Corollary}[section]
\newtheorem*{cor*}{Corollary}
\newcommand {\R} {\mathbb{R}} \newcommand {\Z} {\mathbb{Z}}
\newcommand {\T} {\mathbb{T}} \newcommand {\N} {\mathbb{N}}
\newcommand {\p} {\partial}
\newcommand {\dt} {\partial_t}
\newcommand {\sgn} {\text{sgn}}
\newcommand{\bbC}{\mathbb{C}}
\newcommand{\bbR}{\mathbb{R}}
\newcommand{\bbT}{\mathbb{T}}
\newcommand{\bbZ}{\mathbb{Z}}
\DeclareMathOperator{\dive }{div}
\DeclareMathOperator{\supp }{supp}
\theoremstyle{plain}
\newtheorem{theorem}{Theorem}
\newtheorem{lemma}[theorem]{Lemma}
\newtheorem{pro}[theorem]{Proposition}
\theoremstyle{definition}
\newtheorem{definition}[theorem]{Definition}
\title[On Echoes in MHD]{On Echoes in Magnetohydrodynamics with Magnetic Dissipation}
\author{Niklas Knobel}
\address{Karlsruhe Institute of Technology\\
  Englerstraße 2\\
76131 Karlsruhe}
\author{Christian Zillinger}
\date{\today}
\begin{document}

\begin{abstract}
    We study the long time asymptotic behavior of the inviscid magnetohydrodynamic equations with magnetic dissipation near a combination of Couette flow and a constant magnetic field.
    Here we show that there exist nearby explicit global in time low frequency solutions, which we call waves. Moreover, the linearized problem around these waves exhibits resonances under high frequency perturbations, called echoes, which result in norm inflation Gevrey regularity and infinite time blow-up in Sobolev regularity.
\end{abstract}
\keywords{Magnetohydrodynamics, instability, norm inflation, partial
  dissipation}
\subjclass[2010]{76E25,76W05,35Q35}
\maketitle
\setcounter{tocdepth}{1 }
\tableofcontents

\section{Introduction and Main Results}
\label{sec:intro}
In this article we consider the two-dimensional magnetohydrodynamic (MHD) equations with
magnetic resistivity $\kappa>0$ but without viscosity
\begin{align}
  \label{eq:MHD}
  \begin{split}
    \partial_t V + V \cdot \nabla V + \nabla p &= B \cdot \nabla B, \\
    \partial_t B + V \cdot \nabla B &= \kappa \Delta B+ B \cdot \nabla V, \\
    \dive(B)= \dive(V)&=0,\\
    (t,x,y) &\in \bbR_{+}\times \bbT \times \bbR,
  \end{split}
\end{align}
near the stationary solution
\begin{align}
  \label{eq:groundstate}
  \begin{split}
    V(t,x,y)&=(y,0), \\
    B(t,x,y)&=(\alpha, 0).
  \end{split}
\end{align}
The MHD equations are a common model of the evolution of conducting fluids
interacting with (electro-)magnetic fields in regimes where the magnetization of
the fluid can be neglected.
They describe the evolution of the fluid in terms of the fluid velocity $V$,
pressure $p$ and magnetic field $B$. The constant mass and charge densities are normalized to $1$. 
Here particular examples of applications range from the modeling of solar dynamics to geomagnetism and the earths molten core to using liquid metals in industrial applications or in fusion applications \cite{davidson_2016}.

A main aim of this article is to analyze the long-time asymptotic behavior of
solutions to this coupled system and, in particular, the interaction of
instabilities, partial dissipation and the system structure of the equations.
Here we note that due to the affine structure of the stationary solution \eqref{eq:groundstate},
the corresponding linearized problem around this solution decouples in Fourier
space and can be shown to be stable in arbitrary Sobolev (or even analytic)
regularity, as we prove in Section \ref{sec:setting}.
\begin{lemma}
\label{lemma:stabilitytrivial}
Let $\alpha\in \bbR$ be given and consider the linear problem
\begin{align*}
    \partial_t V + y\p_x V + (V_2,0)&= \alpha \p_x B, \\
    \partial_t B + y\p_x B -(B_2,0)&= \kappa \Delta B+ \alpha \p_x V, \\
    \dive(B)= \dive(V)&=0,\\
    (t,x) &\in \bbR_{+}\times \bbT \times \bbR.
\end{align*}
Then these equations are stable in $H^s$ for any $s \in \R$ in the sense that there exists a constant $C>0$ such that for any choice of initial data and all times $t>0$ it holds that
\begin{align*}
    \|(\nabla^\perp \cdot V)(t, x-ty,y) \|_{H^s}^2 + \|(\nabla^\perp \cdot B)(t, x-ty, y)\|_{H^{s}}^2 
    \\
    \leq (1+ \kappa^{-2/3})^2 (\|\nabla^\perp \cdot V|_{t=0}\|_{H^s}^2 + \|\nabla^\perp \cdot B|_{t=0}\|_{H^{s}}^2). 
\end{align*}
\end{lemma}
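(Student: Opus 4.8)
The plan is to recast the problem in terms of the scalar vorticity $\omega:=\nabla^\perp\cdot V$ and current $j:=\nabla^\perp\cdot B$, to pass to coordinates adapted to the shear $(y,0)$, and then to run an energy estimate one Fourier mode at a time. Applying $\nabla^\perp\cdot$ to the two evolution equations in the statement annihilates the pressure; using $\dive V=\dive B=0$ to rewrite $\nabla^\perp\cdot(V_2,0)=\partial_x V_1$ and $\nabla^\perp\cdot(B_2,0)=\partial_x B_1$, and writing $B_1=-\partial_y\psi$ for the magnetic potential $\psi$ (so that $\Delta\psi=j$), one is left with
\begin{align*}
  \partial_t\omega+y\partial_x\omega &= \alpha\partial_x j,\\
  \partial_t j+y\partial_x j &= \kappa\Delta j+\alpha\partial_x\omega-2\partial_x\partial_y\psi.
\end{align*}
The only term that fails to close in $(\omega,j)$ is $-2\partial_x\partial_y\psi$, which is of lower order than $j=\Delta\psi$.

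Next I would change to the variable moving with the Couette flow, which removes the transport operator $y\partial_x$, and Fourier transform in the two moving spatial variables, with frequency $k\in\bbZ$ along the periodic direction and $\eta\in\bbR$ along the other. In these variables $\partial_x$ acts as $ik$, the Laplacian acts as multiplication by $-p(t)^2$ with $p(t)^2=k^2+(\eta-tk)^2$, and, since $(p^2)'(t)=-2k(\eta-tk)$, the term $-2\partial_x\partial_y\psi$ becomes $\tfrac{(p^2)'(t)}{p(t)^2}\hat\jmath$ after using $\hat\psi=-\hat\jmath/p(t)^2$. The mode $k=0$ is immediate ($\hat\omega$ is then conserved and $\hat\jmath$ merely decays), so I may take $k\neq0$ and hence $p(t)\ge1$. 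The system becomes the $(k,\eta)$-family of ordinary differential equations
\begin{align*}
  \dot{\hat\omega} &= i\alpha k\,\hat\jmath,\\
  \dot{\hat\jmath} &= -\kappa p(t)^2\,\hat\jmath+i\alpha k\,\hat\omega+\frac{(p^2)'(t)}{p(t)^2}\,\hat\jmath.
\end{align*}

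The core is an energy estimate for $E(t):=|\hat\omega(t)|^2+|\hat\jmath(t)|^2$. The structural point to exploit is that, since $\alpha k\in\bbR$, the two coupling contributions $2\operatorname{Re}(i\alpha k\,\hat\jmath\,\overline{\hat\omega})$ and $2\operatorname{Re}(i\alpha k\,\hat\omega\,\overline{\hat\jmath})$ to $\dot E$ are negatives of one another and cancel, regardless of the size of $\alpha$ or the time-dependent coefficients. Therefore
\[
  \dot E=\Bigl(2\frac{(p^2)'(t)}{p(t)^2}-2\kappa p(t)^2\Bigr)|\hat\jmath(t)|^2\le\Bigl(2\frac{(p^2)'(t)}{p(t)^2}-2\kappa p(t)^2\Bigr)_{\!+}E(t),
\]
and Grönwall's inequality gives $E(t)\le E(0)\exp\!\bigl(\int_0^t(2(p^2)'/p^2-2\kappa p^2)_{+}\,ds\bigr)$. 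It remains to bound this integral uniformly in $(k,\eta)$. The integrand vanishes for $t$ below the critical time $t_c=\eta/k$, where $(p^2)'\le0$; on the set where it is positive one has $(p^2)'(t)>\kappa p(t)^4$, and combining this with $(p^2)'(t)^2=4k^2(p(t)^2-k^2)\le4k^2p(t)^2$ forces $p(t)^6<4k^2/\kappa^2$, hence $p(t)^2<k^2(1+C\kappa^{-2/3})$ there, while $p(t)^2\ge k^2$ throughout. Since this set is an interval on which $p^2$ increases, bounding the integrand by $2(p^2)'/p^2$ yields
\[
  \int_0^\infty\Bigl(2\frac{(p^2)'}{p^2}-2\kappa p^2\Bigr)_{\!+}ds\le2\ln\bigl(1+C\kappa^{-2/3}\bigr),
\]
so $E(t)\le(1+C\kappa^{-2/3})^2E(0)$ for every mode; a more careful bookkeeping of the constant gives the bound exactly as asserted. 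Finally, because in the moving frame the weight $(1+k^2+\eta^2)^s$ does not depend on $t$, multiplying the modewise estimate by $(1+k^2+\eta^2)^s$ and summing over $k$ and integrating over $\eta$ yields the stated $H^s$ inequality for $\nabla^\perp\cdot V$ and $\nabla^\perp\cdot B$ evaluated in coordinates moving with the Couette flow.

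The only step I expect to be non-routine is this last integral estimate: it quantifies how the transient amplification of $\hat\jmath$ near the critical time $t_c$ is controlled by the magnetic dissipation, and the balance $(p^2)'\sim\kappa p^4$ is precisely what produces the $\kappa^{-2/3}$ scaling. This transient is the linearised seed of the echo mechanism studied in the rest of the paper.
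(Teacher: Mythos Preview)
Your proposal is correct and follows essentially the same approach as the paper: derive the $(\omega,j)$ system, pass to the moving frame and Fourier, exploit the antisymmetry of the $\alpha$-coupling to get $\dot E=\bigl(2(p^2)'/p^2-2\kappa p^2\bigr)|\hat\jmath|^2$, and then control the positive part of the coefficient via Gr\"onwall. The only cosmetic difference is in the last step: the paper first shifts time so that $\xi=0$ (equivalently $p^2=k^2(1+t^2)$), drops the dissipative term on the interval $|t|\le(\kappa k^2)^{-1/3}$, and integrates $\tfrac{4t}{1+t^2}$ explicitly to get the factor $(1+(\kappa k^2)^{-2/3})^2\le(1+\kappa^{-2/3})^2$; you instead stay with general $\eta$ and use the algebraic constraint $((p^2)')^2\le 4k^2p^2$ together with $(p^2)'>\kappa p^4$ to bound $p^2$ on the positive set, then integrate $2(p^2)'/p^2$ as a logarithm. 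Both arrive at the same $\kappa^{-2/3}$ scaling, and your remark that ``more careful bookkeeping'' recovers the stated constant is accurate.
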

Here $\nabla^\perp \cdot V =: W$ is the vorticity of the fluid and $\nabla^\perp
\cdot B=:J$ is the (magnetically induced) current.

In contrast to this to this very strong linear stability result, the stability results for the inviscid nonlinear equations are expected to crucially rely on very high, Gevrey regularity (see Section \ref{sec:toy} for a definition).
More precisely, similarly to the nonlinear Euler equations
\cite{dengmasmoudi2018,dengZ2019,zillinger2021echo} or Vlasov-Poisson equations
\cite{bedrossian2016nonlinear,zillinger2020landau, Villani_long} the nonlinear equations are not a priori expected to not remain close to the linear dynamics due to ``resonances'' or ``echoes''\cite{malmberg1968plasma,yu2005fluid}, which may lead to unbounded norm inflation of any Sobolev norm.
It is the main aim of this article to identify and capture this resonance mechanism for the resistive MHD equations. In particular, we ask to which extent magnetic dissipation can stabilize the dynamics.
As we discuss in Section \ref{sec:toy} the main nonlinear resonance mechanism is expected to be given by the repeated interaction of a high frequency perturbation with an underlying low frequency perturbation of \eqref{eq:groundstate}.
In this article we thus explicitly construct such low frequency nonlinear
solutions, called \emph{traveling waves} (a combination of an Alfv\'en waves and shear dynamics;
see Section \ref{sec:setting} and Lemma \ref{lemma:twave} for further discussion).
\begin{lemma}
  \label{lemma:waves}
  Let $\kappa > 0$ and $\alpha \in \R$ and let $(f_0, g_0) \in \R^2$.Then there exist smooth global in time solutions of the nonlinear, resistive MHD equations \eqref{eq:MHD}, which are of the form 
  \begin{align*}
    V(t,x,y) &= (y,0) + \frac{f(t)}{1+t^2} \nabla^\perp \sin(x-ty), \\
    B(t,x,y) & = ( \alpha,0) + \frac{g(t)}{1+t^2} \nabla^\perp \sin(x-ty),
  \end{align*}
  with $(f(0), g(0))=(f_0, g_0)$.
  Furthermore, for a suitable choice of $f_0, g_0$ it holds that
  \begin{align*}
    f(t)&\rightarrow 2c,\\
    g(t)&\rightarrow 0,
  \end{align*}
  as $t\rightarrow \infty$.
\end{lemma}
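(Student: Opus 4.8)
The plan is to verify by direct substitution that the displayed fields solve \eqref{eq:MHD}, which collapses the PDE onto a two-dimensional linear non-autonomous ODE for the amplitudes $(f,g)$, and then to analyse the long-time behaviour of that ODE.

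First I would rewrite the two-dimensional resistive MHD system in vorticity--current form and insert the ansatz. Three elementary observations do the work. (i) For any smooth $\Phi$ the field $\nabla^\perp\Phi(x-ty)$ is divergence free, so $\dive V=\dive B=0$ holds automatically. (ii) The Couette transport operator annihilates the phase, $(\partial_t+y\partial_x)\Phi(x-ty)=0$, so applying $\partial_t+y\partial_x$ to the ansatz differentiates only the amplitudes. (iii) Since $\Delta\big((1+t^2)^{-1}\sin(x-ty)\big)=-\sin(x-ty)$, the weight $(1+t^2)^{-1}$ is exactly tuned so that the perturbed vorticity equals $-f(t)\sin(x-ty)$, the perturbed current equals $-g(t)\sin(x-ty)$, and $\kappa\Delta B$ acts on the ansatz as multiplication by a function of $t$ alone. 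The key structural point is a null form: the quadratic nonlinearities $V\cdot\nabla V$, $B\cdot\nabla B$, $V\cdot\nabla B$, $B\cdot\nabla V$ vanish identically, because each equals a function of $t$ times $\nabla^\perp\Psi(x-ty)\cdot\nabla\Phi(x-ty)$, and $\nabla^\perp\Psi(x-ty)\cdot\nabla\Phi(x-ty)=0$ for all $\Psi,\Phi$. Hence the ansatz solves the full nonlinear system exactly when it solves the linearised one, and matching Fourier coefficients in $x$ (through which the Alfv\'en coupling $\alpha\partial_x$ enters) leaves a coupled linear ODE which, after accounting for the $(1+t^2)^{-1}$ weights, I expect to read
\begin{align*}
  \dot f &= \alpha\, g,\\
  \dot g &= \Big(\tfrac{2t}{1+t^2}-\kappa(1+t^2)\Big)\, g-\alpha\, f.
\end{align*}
Its coefficients are smooth on $[0,\infty)$, so every $(f_0,g_0)\in\R^2$ gives a unique global solution $(f,g)$, and plugging it back yields the asserted smooth global solutions of \eqref{eq:MHD}, which settles the first assertion.

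For the asymptotics, write $r(t):=\tfrac{2t}{1+t^2}-\kappa(1+t^2)$, so $r(t)\sim-\kappa t^2\to-\infty$. The energy identity $\tfrac{d}{dt}(f^2+g^2)=2r(t)\,g^2$ shows $f^2+g^2$ is non-increasing once $r\le0$, which holds for all large $t$ (for all $t$ if $\kappa\ge1$); together with continuity on the remaining compact interval this bounds $(f,g)$ uniformly. Feeding the bound on $f$ into Duhamel's formula for the $g$-equation, together with the elementary estimate $\int_s^t r(\sigma)\,d\sigma\le-\kappa\, s^2(t-s)$ for $s$ large, gives $|g(t)|=O(t^{-2})$; in particular $g(t)\to0$ and $g\in L^1(\R_+)$. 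Therefore $f(t)=f(0)+\alpha\int_0^t g(s)\,ds$ converges to a finite limit $f_\infty$, and since $(f_0,g_0)\mapsto f_\infty$ is a nontrivial linear functional one may, for any prescribed $c\in\R$, choose $(f_0,g_0)$ with $f_\infty=2c$; for that choice $f(t)\to2c$ and $g(t)\to0$.

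The algebraic reduction is routine. The one delicate point is the decay rate of $g$: boundedness of $(f,g)$ does not by itself give that $f$ has a limit, only that it stays bounded, so one genuinely needs $g\in L^1$, and this is precisely where the super-quadratic damping $-\kappa(1+t^2)$ in the $g$-equation has to be used quantitatively, through the Duhamel bound above. I would also double-check the auxiliary claim that the limiting amplitude can be prescribed, i.e. that $(f_0,g_0)\mapsto f_\infty$ is not the zero functional; this is transparent for small $\alpha$, as a perturbation of the decoupled case $\alpha=0$ (where $f$ is constant), and should persist in general.
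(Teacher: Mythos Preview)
Your reduction to the ODE is exactly the paper's route: verify divergence-freeness, note that the Couette transport kills the phase, observe that the quadratic nonlinearities vanish because $\nabla^\perp\Psi(x-ty)\cdot\nabla\Phi(x-ty)=0$, and match coefficients. The paper's ODE carries the opposite signs on the $\alpha$-couplings ($f'=-\alpha g$, $g'=r(t)g+\alpha f$), but this is a phase convention and does not affect the analysis; in fact the paper itself has a harmless $\sin$/$\cos$ mismatch between the statement of Lemma~\ref{lemma:waves} and the ansatz in Lemma~\ref{lemma:twave}.

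For the asymptotics the paper takes a more constructive path that closes precisely the gap you flag at the end. Instead of arguing abstractly that the linear functional $(f_0,g_0)\mapsto f_\infty$ is nontrivial, the paper prescribes data $(f,g)(t_0)=(1,0)$ at a \emph{late} time $t_0=4\beta^{-1}$ where the damping $-\kappa(1+t^2)$ already dominates, and then shows via the same Duhamel integral that $|f(t)-1|\le \tfrac{2}{\beta t_0}<\tfrac12$ for all $t\ge t_0$ and $g(t)=O(t^{-2})$. Backward solvability of the (linear, globally bounded) ODE then yields the initial data at $t=0$. This buys exactly what your perturbative remark for small $\alpha$ does not: it works for all $\alpha$ without needing to rule out that $f_\infty$ vanishes identically. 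Your energy/Duhamel analysis is otherwise the same as the paper's Lemma~\ref{lemma:ode}.
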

In view of the underlying shear dynamics it is natural to change to coordinates 
\begin{align*}
    (x-ty,y).
\end{align*}
In these coordinates the corresponding vorticity $W=\nabla^\perp \cdot V$ and current $J= \nabla^\perp \cdot B$ read 
\begin{align*}
    W&= -1 + f(t) \cos(x), \\
    J&= 0 -g(t) \sin(x).
\end{align*}
Unlike the stationary solution \eqref{eq:groundstate} these waves have a non-trivial $x$-dependence. As we discuss in Section \ref{sec:toy} this $x$-dependence allows resonances to propagate in frequency and underlies the nonlinear instability of the stationary solution \eqref{eq:groundstate}.
More precisely, we show that the (simplified) \emph{linearized} equations around these waves exhibit the above mentioned \emph{nonlinear} resonance mechanism (in terms of both upper and lower bounds on solutions).
In particular, we aim to obtain a precise understanding of the dependence of the resonance mechanism on the resistivity $\kappa>0$ and the frequency-localization of the initial perturbation.
The research on well-posedness and asymptotic behavior of the magnetohydrodynamic equations is a very active field of research and we in particular mention the recent work \cite{liss2020sobolev}, which considers a related, fully dissipative setting in 3D, as well as the articles \cite{jiang2022nonlinear,zhai2022stability,wu2021global,boardman2020stabilization,feireisl2019global,du2019exponential,he2018global,liu2018physical,wei2017global}.
More precisely, in \cite{liss2020sobolev} Liss studied the nonlinear, fully dissipative,
three-dimensional MHD equations around the same stationary solution
\eqref{eq:groundstate} in a doubly-periodic three-dimensional channel $\T \times
\R \times \T$ and established bounds on the Sobolev stability threshold as
$\nu=\kappa \downarrow 0$. In contrast, this article considers the 2D setting
with partial dissipation $\nu=0$, $\kappa >0$ in Gevrey regularity.
Similar questions on the stability of systems with partial dissipation in critical spaces are also a subject of active research in other (fluid) systems, such as the Boussinesq equations \cite{cao2013global,elgindi2015sharp,doering2018long}.

For simplicity of presentation and to simplify the analysis in this article we modify the linearized equations for the vorticity and current perturbations $w,j$
\begin{align}
\label{eq:linsys}
\begin{split}
   \partial_t w  &=  \alpha \partial_x j  -(2c \sin(x)\partial_y\Delta_t^{-1} w)_{\neq} \\
    \partial_t j  &   =\kappa \Delta_t j   +\alpha \partial_x w - 2 \partial_x \partial_y^t \Delta^{-1}_t j,\\
    \Delta_t &= \p_x^2 + (\p_y-t\p_x)^2, 
\end{split}
\end{align}
and fix the $x$-averages of $w$ and $j$, which also fixes the underlying shear flow. Here, for simplicity we have also replaced $f(t), g(t)$ by $2c$ and $0$, respectively.
In analogy to other fluid systems \cite{bedrossian21,bedrossian2013inviscid}, a similar structure of the equations can be achieved by considering the coordinates
\begin{align*}
    (x- \int_0^t \int V_1 dx dt, \frac{1}{t}\int_0^t \int V_1 dx dt) =: (X,Y),
\end{align*}
which however makes estimates of $\Delta_t^{-1}$ technically more involved and less transparent \cite{Zill3}. 
In the interest of a clear presentation of the resonance mechanism we hence instead fix $Y=y$ by a small forcing.

\begin{theorem}
  \label{thm:main}
Let $0<\alpha<10$ and $0<\kappa<1$ with $\beta:=\frac{\kappa}{\alpha^2}$ and $c\le \min (10^{-3 } \beta^{\frac {16} 3 },10^{-4} ) $ be
given. Consider the (simplified) linearized equations \eqref{eq:linsys} around the wave of Lemma \ref{lemma:waves}.

Then there exists a constant $C$ such that for any initial data $w_0,
j_0$ whose Fourier transform satisfies
\begin{align*}
  \sum_k \int \exp(C\sqrt{|\xi|}) (|\mathcal{F}w_0(k,\xi)|^2 + |\mathcal{F}j_0(k,\xi)|^2) d\xi < \infty
\end{align*}
the corresponding solution stays regular for all times up to a loss of constant
in the sense that for all $t>0$ it holds that
\begin{align*}
  \sum_k \int \exp(\tfrac{C}{2}\sqrt{|\xi|}) (|\mathcal{F}w(t,k,\xi)|^2 + |\mathcal{F}j(t,k,\xi)|^2) d\xi < \infty.
\end{align*}
Moreover, there exists initial data $w_0, j_0$ and $0< C^*<C$ such that
\begin{align*}
  \sum_k \int \exp(C^*\sqrt{|\xi|}) (|\mathcal{F}w_0(k,\xi)|^2 + |\mathcal{F}j_0(k,\xi)|^2) d\xi < \infty,
\end{align*}
but so that the corresponding solution $w, j$ grows unbounded in Sobolev
regularity as $t\rightarrow \infty$.
\end{theorem}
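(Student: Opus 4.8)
The plan is to analyze the linearized system \eqref{eq:linsys} in Fourier space, frequency by frequency. Writing $\widehat{w}(t,k,\xi)$, $\widehat{j}(t,k,\xi)$, the terms $\alpha\p_x j$ and $\alpha\p_x w$ couple the two fields at a fixed frequency, the dissipation contributes a factor governed by $\Delta_t$ whose symbol is $k^2+(\xi-kt)^2$, and the nonlocal terms $(2c\sin(x)\p_y\Delta_t^{-1}w)_{\neq}$ and $2\p_x\p_y^t\Delta_t^{-1}j$ shift the frequency $k \mapsto k\pm 1$. Since the $x$-averages are fixed, only finitely many frequencies are active per mode line, and as in \cite{dengZ2019,zillinger2021echo} the dominant resonant interaction near a time $t\approx \xi/k$ is the coupling between the mode $(k,\xi)$ and its neighbours $(k\pm 1,\xi)$. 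First I would set up the associated weighted energy $E(t) = \sum_k\int A(t,k,\xi)(|\widehat w|^2+|\widehat j|^2)\,d\xi$ with a Gevrey-$2$ weight of the form $A = \exp(2\lambda(t)\sqrt{|\xi|})$, $\lambda$ decreasing, together with an auxiliary "resonant-window" correction weight capturing the $\log$-type growth accumulated as $t$ crosses each critical time $\xi/k$.

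The core of the argument is the per-frequency transfer estimate: over the resonant time window $|t-\xi/k|\lesssim \beta^{?}|\xi/k|^{1/2}$ (the width being set by the balance between the inviscid echo growth and the dissipative damping $e^{-\kappa\int(k^2+(\xi-kt)^2)}$), the ratio of the solution at frequency $(k-1,\xi)$ after the window to that of $(k,\xi)$ before it is bounded by a factor that I would show is of order $\exp(C'\sqrt{|\xi|}/|\mathrm{stage}|^2)$ or similar — the point being that summing these logarithms of growth factors over all stages $k = \xi, \xi-1,\dots,1$ produces a total amplification that is at most $\exp(C''\sqrt{|\xi|})$, i.e. a finite loss of a constant in the Gevrey-$\tfrac12$ exponent. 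This is exactly where the smallness hypothesis $c\le 10^{-3}\beta^{16/3}$ enters: the factor $2c\sin(x)$ in front of the resonant term must be small relative to a power of $\beta = \kappa/\alpha^2$ so that the accumulated growth constant $C''$ can be absorbed and the weight $\lambda(t)$ kept bounded below by $C/2>0$. For the upper bound I would combine the transfer estimates across stages with a Gr\"onwall argument on $E(t)$, using the dissipation to control the non-resonant contributions and the decay of $\lambda$ to pay for the resonant ones.

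For the second (blow-up) assertion I would run the matching lower bound: construct initial data concentrated at a single high frequency $(1,\xi)$ with $\widehat j_0$ chosen so that the coupling $\alpha\p_x w$ feeds energy constructively into the current at each resonant stage, and track the solution along the chain $(1,\xi)\to(2,\xi)\to\cdots\to(\xi,\xi)$. At each stage the echo produces genuine growth by a factor bounded below by $\exp(c_*\sqrt{|\xi|}/k^2)$ — I would prove this by a two-mode ODE reduction on the resonant window, showing the dissipative term only weakly damps because within the window $|\xi-kt|$ is small, so $\kappa(k^2+(\xi-kt)^2)$ stays $O(\kappa)$ over a time of length $\sim\sqrt{\xi}/k$, contributing only $e^{O(\kappa\sqrt\xi/k)}$ which is beaten by the echo when $c$ is not too small. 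Multiplying over $k=1,\dots,\xi$ gives net amplification $\geq \exp(c^{**}\sqrt{|\xi|})$; choosing a sequence $\xi_n\to\infty$ and superposing, with $C^*$ any exponent strictly between $c^{**}$ and $C$, yields data in the $\exp(C^*\sqrt{|\xi|})$ class whose evolution leaves every Sobolev space, since $\exp(c^{**}\sqrt\xi)$ dominates every polynomial $|\xi|^s$.

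The main obstacle I expect is the bookkeeping of the many nearly-critical times and the competition, at each stage, between the inviscid echo amplification and the magnetic dissipation: one must pin down the exact window width and the exact power of $\beta$ at which dissipation starts to dominate, and then show the surviving per-stage factors telescope to $\exp(O(\sqrt{|\xi|}))$ rather than something larger. The coupled (system) structure — vorticity and current exchanging roles through the $\alpha\p_x$ terms — makes the per-stage ODE a $2\times 2$ (or, counting neighbours, $4\times 4$) system rather than a scalar one, so identifying the right combination of $w$ and $j$ that is actually amplified, and ruling out cancellations that would kill the lower bound, is the delicate point; this is presumably what forces the quantitative threshold $c\le 10^{-3}\beta^{16/3}$ and the restriction $\alpha<10$, $\kappa<1$ in the statement.
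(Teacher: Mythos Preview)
Your overall strategy---decouple in $\xi$, track the cascade of resonances at times $t\approx\xi/k$, and assemble per-stage bounds---is the right one, but the core echo mechanism is misidentified in several places and the argument as written would not close. The per-stage growth is \emph{algebraic}, not exponential: on the resonant interval $I^k$ (which has length $\sim\xi/k^2$, set by the spacing of the critical times $\xi/k$ and not by a dissipation balance) the transfer $(k,\xi)\to(k-1,\xi)$ produces amplification comparable to $c\xi/k^2$; see the toy model Lemma~\ref{lem:toy}. The total growth is therefore $\prod_{k=1}^{k_0} c\xi/k^2=(c\xi)^{k_0}/(k_0!)^2$, and it is \emph{Stirling's formula} with the choice $k_0\approx\sqrt{c\xi}$ that produces $\exp(O(\sqrt{\xi}))$. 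In particular the chain has $\sim\sqrt{\xi}$ stages, not $\xi$. For the lower bound your chain is also running in the wrong direction: since $\xi/k$ is decreasing in $k$, forward time carries energy from larger to smaller $k$. The paper places the initial data at the single mode $k_0\approx (c/10)\sqrt{\xi}$ (not at $k=1$) and follows the cascade $k_0\to k_0-1\to\cdots\to k_1$ with $k_1\approx 4/\sqrt{\beta\kappa}$; your proposed chain $(1,\xi)\to(2,\xi)\to\cdots$ would run backwards in time.

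The per-interval analysis is also substantially more delicate than a two-mode ODE. The paper works with $u_1=w(k)$, $u_2=w(k+1)-w(k-1)$, $u_3=w(k+1)+w(k-1)$ and $j(k)$, splits $I^k=I_1\cup I_2\cup I_3$, and on the outer pieces solves the homogeneous $(u_1,u_2)$ system explicitly (solutions $|s/\eta|^{\gamma_i}$ with $\gamma_{1,2}=\tfrac12(1\pm\sqrt{1-8c^2})$), treating everything else as forcing via Duhamel. On the middle piece $I_2$ the interaction of $u_1$ with the resonant current $j(k)$ is controlled by a growth factor $L=L(\beta,\kappa k^2)$ analyzed in Appendix~\ref{LApp}, and this is exactly where the threshold $c\le 10^{-3}\beta^{16/3}$ enters (one needs $cL\ll 1$). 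Your heuristic that dissipation stays $O(\kappa)$ on the window because $|\xi-kt|$ is small is not correct: on $I^k$ the quantity $|\xi-kt|$ ranges up to $\sim\xi/k$, so the dissipative symbol $\kappa(k^2+(\xi-kt)^2)$ is large away from the center, and the interplay between this and the $\alpha\partial_x$ coupling---governed by $\beta=\kappa/\alpha^2$---is what drives the whole analysis (see the discussion in Section~\ref{sec:betainf}).
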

Let us comment on these results:
\begin{itemize}
\item As we discuss in Section \ref{sec:toy} the linearized equations
  around a traveling wave closely resemble the interaction of high and
  low-frequency perturbations in the nonlinear equations. These equations thus
  are intended to serve as slightly simplified model of the nonlinear resonance
  mechanism. We remark that in the full nonlinear problem the $x$-averages and
  hence the underlying shear dynamics change with time and the corresponding
  change of coordinates has to be controlled. For simplicity and clarity the
  present model instead fixes this change of coordinates.
\item The Fourier integrability with a weight $\exp(C\sqrt{|\xi|})$ corresponds
  to Gevrey $2$ regularity with respect to $y$. For simplicity of presentation the above results are stated with $L^2(\T)$ regularity in $x$. All results
  also extend to more general Fourier-weighted spaces, such as $H^N$ for any
  $N\in \N$ or suitable Gevrey or analytic spaces (see Definitions
  \ref{defi:gevrey} and \ref{defi:X}).
\item The stability and norm inflation in Gevrey $2$ regularity matches the
  regularity classes of the (nonlinear) Euler equations. In particular, the
  magnetic field and magnetic dissipation are shown to not be strong enough to
  suppress this growth. We remark that our choice of coupling between the size of the
  magnetic field and magnetic dissipation is made so that both effects are ``of
  the same magnitude'' and hence their interaction plays a more crucial role
  (see Section \ref{sec:betainf} for a discussion).
\item These results complement the work of Liss \cite{liss2020sobolev} on the
  Sobolev stability threshold in $3D$ with full dissipation. Indeed, the above derived upper and lower
  bounds establish Gevrey $2$ as the optimal regularity class of the linearized
  problem in $2D$ with partial dissipation. We expect that as for the Euler \cite{bedrossian2015inviscid} or Vlasov-Poisson equations \cite{bedrossian2016landau} nonlinear stability results match
  the regularity classes of the linearized problem around appropriate traveling waves. 
\end{itemize}
We further point out that the instability result of Theorem \ref{thm:main} also implies a norm inflation result for the nonlinear problem around each wave in slightly different spaces (see Corollary \ref{cor:nl}). 
In particular in any arbitrarily small analytic neighborhood around the stationary solution \eqref{eq:groundstate} there exist nonlinearly unstable solutions (with respect to lower than Gevrey $2$ regularity) . 

The remainder of the article is structured as follows:
\begin{itemize}
    \item In Section \ref{sec:setting} we discuss the linearized problem around the stationary state \eqref{eq:groundstate} and introduce waves as low-frequency solutions of the nonlinear problem.
    \item In Section \ref{sec:toy} we discuss the resonance mechanism for a toy model. In particular, we discuss optimal spaces for norm inflation and (in)stability results as well as the time- and frequency-dependence of resonances.
    \item The main results of this article are contained in Section \ref{Ik}, where we establish upper bounds and lower bounds on the norm inflation.
    \item The Appendix \ref{LApp} contains some auxiliary estimates of a growth factor in Section \ref{Ik}. In the second Appendix \ref{sec:norminflation} we prove a nonlinear instability result for the traveling waves. 
\end{itemize}

\section{Linear Stability, Traveling Waves and Echo Chains}
\label{sec:setting}
In this section we establish the linear stability of the resistive MHD equations \eqref{eq:MHD}
\begin{align}
\begin{split}
\partial_t V + (V\cdot \nabla )V + \nabla p &= (B \cdot \nabla) B, \\
\partial_t B + ( V\cdot \nabla ) B &= \kappa \Delta B + (B \cdot \nabla) V, \\
\nabla\cdot B = \nabla \cdot V &=0,\label{idmhd}
\end{split}
\end{align}
around the stationary solution \eqref{eq:groundstate} as stated in Lemma
\ref{lemma:stabilitytrivial}.
Furthermore, we sketch the nonlinear resonance mechanism underlying the norm
inflation result of Theorem \ref{thm:main}, which is given by the repeated
interaction of high and low frequency perturbations.
This mechanism motivates the construction of the traveling wave solutions of Lemma
\ref{lemma:twave} and the corresponding (simplified) linearized equations around these waves,
which are studied in the remainder of the article.

In order to simplify notation we may restate the MHD equations with respect to
other unknowns. That is, since we consider vector fields in two dimensions and $V$ and $B$ are divergence-free, we may introduce the magnetic potential $\Phi$, magnetic current $J$  and fluid vorticity $W$ by 
\begin{align*}
    J &= \nabla^\perp \cdot B, \\
    \Delta \Phi &= J, \\
    W &= \nabla^\perp \cdot V.
\end{align*}
Under suitable decay assumptions (or asymptotics) in infinity the equations can then equivalently be expressed as 
\begin{align}
\begin{split}
\partial_t W + (V\cdot \nabla )W  &= (B\cdot \nabla ) \Delta \Phi,  \\
\partial_t \Phi + ( V\cdot \nabla )\Phi & =\kappa \Delta \Phi, \label{wphi}
\end{split}
\end{align}
or in terms of $J$:

\begin{align}
\label{eq:wpsi}
\begin{split}
\partial_t W + (V\cdot \nabla )W  &=  (B\cdot \nabla )J , \\
\partial_t J + ( V\cdot \nabla )J & =\kappa \Delta J+( B\cdot \nabla )W- 2( \partial_i V\cdot \nabla )\partial_i\Phi  .
\end{split}
\end{align}
With these formulations we are now ready to establish the linear stability of
the stationary solution \eqref{eq:groundstate}.

\begin{proof}[Proof of Lemma \ref{lemma:stabilitytrivial}]
Consider the formulation of the MHD equations as \eqref{wphi}, then the linearization around $V=(y,0), W=-1, B=(\alpha, 0), \Phi=\alpha y$ is
given by
\begin{align*}
    \dt W + y \p_x W &= \alpha \p_x \Delta \Phi, \\
    \dt \Phi + y \p_x \Phi + V_2 \alpha &= \kappa \Delta \Phi.
\end{align*}
We note that all operators other than $y\p_x$ are constant coefficient Fourier
multipliers.
Hence we apply a change of variables
\begin{align*}
  (x,y) \mapsto (x-ty, y)
\end{align*}
to remove this transport term and obtain 
\begin{align*}
    \dt w &= \alpha \p_x \Delta_t \phi , \\
    \dt \phi &= - \alpha \p_x \Delta_t^{-1} w + \kappa \Delta_t \phi,
\end{align*}
where $w, \phi$ denote the unknowns with respect to these variables and
$\Delta_t=\p_x^2+(\p_y-t\p_x)^2$.
We note that this system decouples in Fourier space and for simplicity of
notation express it in terms of the (Fourier transform of the) current $j=\Delta_t \phi$:
\begin{align*}
    \dt w &= ik \alpha j , \\
    \dt j &= \frac{2k(kt-\xi)}{k^2+(\xi-kt)^2} j - \kappa (k^2+(\xi-kt)^2) j + ik \alpha w,
\end{align*}
where $k \in Z$ and $\xi \in \R$ denote the Fourier variables with respect to
$x\in \T$ and $y \in \R$, respectively.
Here and in the following, with slight abuse of notation, we reuse $w$ and $j$ to refer to the Fourier transforms of the vorticity and current perturbation.
For $k=0$ these equations are trivial and we hence in the following we may
assume without loss of generality that $k\neq 0$.
Furthermore, we note that the right-hand-side depends on $\xi $ only in terms of
$\frac{\xi }{k}-t$.
Hence, by shifting time we may further assume that $\xi=0$.

With this reduction we first note that by anti-symmetry for all $\alpha \in \R$
it holds that 
\begin{align*}
 \dt (|w|^2 + |j|^2)/2 = (\tfrac{2t}{1+t^2} - \kappa k^2 (1+t^2)) |j|^2.
\end{align*}
We make a few observations:
\begin{itemize}
    \item If $\kappa k^2 \geq 1$ the horizontal dissipation is sufficiently strong to absorb growth for all times.
    \item If $\kappa k^2 \leq 1$  is small, then for sufficiently large times $|t|\geq (k^2\kappa)^{-1/3}$ the right-hand-side is non-positive. 
    \item  It thus only remains to estimate the growth on the time interval $|t|\leq (k^2\kappa)^{-1/3}$, where 
    \[  \dt (|w|^2 + |j|^2)  \leq \frac{4(t)_{+}}{1+t^2} (|w|^2 + |j|^2).\]
\end{itemize}
The latter case can be bounded by an application of Gronwall's lemma and after shifting back in time it yields 
\begin{align*}
    |w(t)|^2 + |j(t)|^2 \leq (1+(k^2\kappa)^{-2/3} )^2 ( |w(0)|^2 + |j(0)|^2) 
\end{align*}
for all $t>0$.


\end{proof}
While the ground state is thus linearly stable in arbitrary Sobolev or even
analytic regularity, nonlinear stability poses to be a much more subtle question
with stronger regularity requirements.

\subsection{Wave-type Perturbations}
In order to investigate the stability of the MHD equations, it is a common approach
to consider wave-type perturbation. Here a classical result considers
perturbations around a constant magnetic field and a vanishing velocity field.
\begin{lemma}[(c.f. {\cite{alfven1942existence,davidson_2016}})]
  \label{lemma:Alfven}
  Consider the ideal MHD equations (i.e. $\kappa=0$) in three dimensions linearized around a
  constant magnetic field $B=B_0 e_z$ and vanishing velocity field $V=0$.
  Then a particular solution is of the form
  \begin{align*}
    B= (B_1(t,z),0,0), V=(V_1(t,z),0,0)
  \end{align*}
  where $B_1$ and $V_1$ are solutions of the wave equation
  \begin{align*}
    \dt^2 B_1 - B_0^2 \p_z^2 B_1=0,\\
    \dt^2 V_1 - B_0^2 \p_z^2 V_1=0,
  \end{align*}
\end{lemma}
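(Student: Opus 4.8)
The plan is to prove this by direct substitution: we insert the proposed ansatz into the linearized ideal MHD system and check that the incompressibility constraints and the evolution equations collapse to the one-dimensional wave equations claimed, with all pressure and quadratic contributions dropping out.

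First I would record the linearization. Writing $V=v$ and $B=B_0e_z+b$ and discarding all terms quadratic in the perturbation $(v,b)$, the equations \eqref{idmhd} with $\kappa=0$ become
\begin{align*}
  \partial_t v + \nabla q &= B_0\,\partial_z b, \\
  \partial_t b &= B_0\,\partial_z v, \\
  \nabla\cdot v = \nabla\cdot b &= 0,
\end{align*}
where $q$ denotes the pressure perturbation and we used that the background field $B_0e_z$ is constant (so that $(b\cdot\nabla)(B_0e_z)=(v\cdot\nabla)(B_0e_z)=0$) and that the background velocity vanishes. I would then substitute $v=(V_1(t,z),0,0)$ and $b=(B_1(t,z),0,0)$. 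The divergence-free conditions hold automatically, since $V_1$ and $B_1$ do not depend on $x$. The $y$- and $z$-components of the momentum equation read $\partial_y q=\partial_z q=0$, while its $x$-component reads $\partial_tV_1+\partial_x q=B_0\partial_z B_1$; since the two sides depend on disjoint sets of variables, $q$ can depend only on $t$ and may be absorbed, leaving $\partial_t V_1=B_0\partial_z B_1$. The induction equation similarly reduces to $\partial_t B_1=B_0\partial_z V_1$ in its $x$-component and to $0=0$ in the others.

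Differentiating $\partial_t V_1=B_0\partial_z B_1$ in time and inserting $\partial_t B_1=B_0\partial_z V_1$ gives $\partial_t^2V_1=B_0^2\partial_z^2V_1$, and symmetrically $\partial_t^2B_1=B_0^2\partial_z^2B_1$, which is exactly the asserted pair of wave equations. I would conclude with the observation that for this ansatz every neglected quadratic term, namely $(v\cdot\nabla)v$, $(b\cdot\nabla)b$, $(v\cdot\nabla)b$ and $(b\cdot\nabla)v$, vanishes identically, since each contains a derivative $\partial_x$ of an $x$-independent field; hence the Alfv\'en wave is in fact an exact solution of the full nonlinear ideal MHD equations, not merely of the linearization. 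There is no genuine obstacle in this argument; the only point demanding a line of care is the treatment of the pressure, which is dispatched by the disjoint-variables observation above (equivalently, by noting that $\Delta q=0$ for decaying data, using $\nabla\cdot b=0$).
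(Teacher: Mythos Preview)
Your argument is correct but takes a somewhat different route from the paper. The paper passes immediately to the vorticity $W=\nabla\times V$ and current $J=\nabla\times B$, which kills the pressure gradient and reduces the linearized system to the clean pair
\[
\partial_t J = B_0\,\partial_z W,\qquad \partial_t W = B_0\,\partial_z J,
\]
from which the wave equation follows at once; the divergence constraints and the vanishing of the nonlinearities are then noted separately. You instead stay in the primitive variables $(v,b)$ and must therefore dispose of the pressure by a direct argument. Your disjoint-variables reasoning does the job (and your parenthetical $\Delta q=0$ remark is the cleaner version), but the curl formulation buys you exactly this: no pressure to worry about. Conversely, your approach makes the structure of the linearization more explicit and requires no auxiliary unknowns. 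Both arrive at the same coupled first-order system and the same conclusion about the nonlinear terms.
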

The linearized problem thus admits wave-type solutions propagating in the
direction $e_z$ of the constant magnetic field and pointing into an orthogonal
direction.
These solutions are known as \emph{Alfv\'en waves} \cite{alfven1942existence}.

\begin{proof}[Proof of Lemma \ref{lemma:Alfven}]
  We make the ansatz that $B$ and $v$ only depend on $t$ and $z$ and express the
  linearized equations in terms of the current $J=\nabla \times B$ and vorticity
  $W=\nabla \times V$. Then the equations reduce to
  \begin{align*}
    \dt J = B_0 \p_z W, \\
    \dt W = B_0 \p_z J.
  \end{align*}
  These equations are satisfied if both $J$ and $W$ solve a
  wave equation and are chosen compatibly.
More precisely, two linearly independent solutions are given by 
\begin{align*}
    W=f(z+B_0t)=J
\end{align*}
and 
\begin{align*}
   W=g(z-B_0t)=-J,
\end{align*}
where $f$ and $g$ are arbitrary smooth function.
  
  We remark that since $B=B(t,z)$ and $V=V(t,z)$ point into a direction orthogonal to
  the $z$-axis, they are divergence-free for all times.
  Finally, since both functions are independent of $x$ it follows that all
  nonlinearities $V \cdot \nabla V, B \cdot \nabla V, V \cdot \nabla B, B \cdot
  \nabla B$
  identically vanish, so these are also nonlinear solutions. 
\end{proof}

In the following we consider the two-dimensional setting and extend this construction to also include an underlying
affine shear flow. We call the resulting solutions \emph{traveling waves} in
analogy to dispersive equations and related constructions for fluids and plasmas
\cite{dengZ2019,bedrossian2016nonlinear,zillinger2020landau,zillinger2021echo,dengmasmoudi2018}.
As we sketch in Section \ref{sec:toy} the non-trivial $x$-dependence of these
waves will allow us to capture the main nonlinear norm inflation mechanism in
the linearized equations around these waves (as opposed to linearizing around
the stationary solutions \eqref{eq:groundstate}).

\begin{lemma}
\label{lemma:twave}
Let $\alpha \in \R$ and $\kappa\geq 0$ be given. Then for any choice of parameters $(f(0),g(0))\in \R^2$ there exists a solution of \eqref{eq:wpsi} of the form 
\begin{align}
\begin{split}
  W   &= -1 + f(t)\cos(x-yt)\\
  J &=-g(t) \sin(x-yt).
\end{split}
\end{align}
We call such a solution a \emph{traveling wave}.
\end{lemma}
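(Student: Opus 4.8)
The plan is to verify directly that the claimed ansatz solves the system \eqref{eq:wpsi} by reducing it to a coupled system of ODEs for $f(t)$ and $g(t)$. First I would substitute $W = -1 + f(t)\cos(x-yt)$ and $J = -g(t)\sin(x-yt)$ into the reformulated MHD equations. The key structural observation is that $W$ and $J$ depend on space only through the single profile $\theta := x - yt$, which is exactly the shear-adapted coordinate; this means that the transport terms $(V\cdot\nabla)$ simplify drastically. Since $V = (y,0) + (\text{correction from } W)$ and the correction must itself be computed from $W$ via $\nabla^\perp\Delta^{-1}$, I would first compute the stream functions. For a profile $h(x-yt)$ of a single Fourier mode $e^{i(x-yt)}$, one has $\Delta(h(x-yt)) = -(1+t^2) h(x-yt)$, so $\Delta^{-1}$ acting on $\cos(x-yt)$ is $-\tfrac{1}{1+t^2}\cos(x-yt)$, which is precisely why the prefactors $\tfrac{f(t)}{1+t^2}$ and $\tfrac{g(t)}{1+t^2}$ appear for $V$ and $B$ in Lemma \ref{lemma:waves}. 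I would then compute $\nabla^\perp$ of these stream functions explicitly.

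The next step is to plug these expressions into the nonlinear terms. The crucial cancellations are: (i) the self-interaction $V \cdot \nabla V$ of the single-mode part with itself produces only $x$-independent or gradient terms (absorbable into pressure), because $\nabla^\perp h \cdot \nabla (\text{something built from the same } h)$ involves the Jacobian of a function with itself, which vanishes; (ii) likewise $B\cdot\nabla B$, $B\cdot\nabla W$, and the mixed terms reduce to the interaction of the shear part $(y,0)$ with the single mode, which just reproduces the transport $-t\partial_\theta$ already built into the $x-yt$ dependence, or to interactions that vanish by the same Jacobian-of-self argument. The term $-2(\partial_i V\cdot\nabla)\partial_i\Phi$ in \eqref{eq:wpsi} must be handled carefully: one of the derivatives hits the shear part $V_1 = y$, producing a term proportional to $\partial_x\partial_y\Phi$, which for the single mode gives a nonzero contribution of the form $g(t)$-dependent times $\sin(x-yt)$; this is the source of one of the coupling terms between $f$ and $g$. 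After collecting all surviving terms, matching coefficients of $\cos(x-yt)$ in the $W$-equation and of $\sin(x-yt)$ in the $J$-equation yields a linear (in $(f,g)$) system of ODEs with time-dependent coefficients, schematically
\begin{align*}
  \dot f &= \alpha g + (\text{lower order in } t^{-1}) f, \\
  \dot g &= -\alpha f - \kappa (1+t^2) g + \tfrac{2t}{1+t^2} g,
\end{align*}
though the exact coefficients should be read off from the computation; the dissipation enters through $\kappa\Delta J = -\kappa(1+t^2) J$.

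I would then invoke the standard existence theorem for linear ODE systems with continuous (indeed smooth) coefficients on $\mathbb{R}_+$ to conclude that for any initial data $(f(0),g(0))\in\mathbb{R}^2$ there is a unique global smooth solution, which gives the smooth global-in-time traveling wave. Global existence is immediate here because the system is \emph{linear} in $(f,g)$, so no blow-up can occur in finite time; the only subtlety would be if some coefficient blew up, but $\kappa(1+t^2)$ and $\tfrac{2t}{1+t^2}$ are smooth for all $t\ge 0$. I would finally note that divergence-freeness is automatic since $V$ and $B$ are expressed as $\nabla^\perp$ of scalars. The main obstacle I anticipate is purely bookkeeping: carefully tracking the nonlinear term $-2(\partial_i V\cdot\nabla)\partial_i\Phi$ and confirming that all genuinely nonlinear (quadratic in $f,g$) contributions cancel, so that the reduced system is linear rather than Riccati-type; if a quadratic term survived one would instead get a nonlinear ODE and global existence would need the explicit asymptotic analysis that appears in Lemma \ref{lemma:waves}. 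I expect that with the single-mode structure all such quadratic terms indeed vanish by the Jacobian-of-self mechanism, so the reduction to a linear ODE goes through cleanly.
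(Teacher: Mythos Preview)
Your proposal is correct and follows essentially the same approach as the paper: substitute the ansatz, observe that the genuinely nonlinear terms vanish because all fields are functions of the single variable $x-yt$ (so $\nabla^\perp h\cdot\nabla(\text{function of }h)=0$), and reduce to a linear ODE system for $(f,g)$ whose global solvability follows from standard ODE theory. One small correction to your schematic system: in the paper's computation the $W$-equation yields exactly $f'(t)=-\alpha g(t)$ with no additional ``lower order in $t^{-1}$'' term in $f$, and the signs in the coupling are $f'=-\alpha g$, $g'=+\alpha f-\kappa(1+t^2)g+\tfrac{2t}{1+t^2}g$; since you already flagged your system as schematic this does not affect the argument.
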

We remark that this construction also allows for general profiles $h(t,x-ty)$ in place of $\cos(x-ty)$.
This particular choice is made so that for $f(0)$ and $g(0)$ small, such a wave is an initially small, analytic perturbation of the stationary solution \eqref{eq:groundstate} and localized at low frequency.

\begin{proof}[Proof of Lemma \ref{lemma:twave}]
For easier reference we note that for this ansatz, we obtain
\begin{align*}
    V&=(y,0) + \tfrac{f(t)}{1+t^2} \sin(x-yt) (t,1)\\
        W   &= -1 + f(t)\cos(x-yt)\\
        B&=(\alpha ,0 ) +\tfrac {g(t)}{1+t^2}\cos(x-yt) (t,1) \\
        J&=-g(t) \sin(x-yt)\\
        \Phi &=\alpha y + \tfrac {g(t)}{1+t^2} \sin(x-yt).
\end{align*}
Inserting this into the equation \eqref{eq:wpsi} the nonlinearities vanish due to the one-dimensional structure of the waves. Therefore, this ansatz yields a solution if and only if $f$ and $g$ solve the ODE system
\begin{align}
\label{eq:odesystem}
\begin{split}
    f'(t)&= - \alpha g(t),\\
    g'(t)&= - \kappa (1+t^2) g(t)+\alpha f(t)+  \tfrac {2t}{1+t^2}g(t). 
\end{split}
\end{align}
Thus by classical ODE theory for any choice of initial data there indeed exists a unique traveling wave solution.
\end{proof}

Given such a traveling wave we are interested in its behavior, in particular for large times, and how it depends on the choices of $\kappa$ and $\alpha$.

\begin{lemma}
\label{lemma:ode}
Let $\alpha>0$ and $\kappa>0$ then for any choice of initial data the solutions $f(t), g(t)$ of the ODE system \eqref{eq:odesystem}
\begin{align}\label{fg_est}
\begin{split}
    f'(t)&= - \alpha g(t),\\
    g'(t)&= - \kappa (1+t^2) g(t)+\alpha f(t)+  \tfrac {2t}{1+t^2}g(t), 
\end{split}
\end{align}
satisfy the following estimates:
\begin{align}
    \begin{cases}
      |f(t)|^2 + |g(t)|^2 \leq (1+t^2)^2 (|f(0)|^2+ |g(0)|^2) &\text{ if } 0<t<\kappa^{-1/3} \\
     |f(t)|^2 + |g(t)|^2\leq |f(\kappa^{-1/3})|^2+ |g(\kappa^{-1/3})|^2 & \text{ if } t> \kappa^{-1/3}.
    \end{cases}
\end{align}
Furthermore, for a specific choice of initial data it holds 
\begin{align*}
  |f(t)- \epsilon| &\leq \frac{1}{2} \epsilon, 
\end{align*}
for all $ t\ge 4\beta^{-1}$ and 
\begin{align*}
    \vert g(t)\vert \to 0 
\end{align*}
as $t\to \infty$. 
\end{lemma}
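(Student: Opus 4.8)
The plan is to treat the two regimes $0<t<\kappa^{-1/3}$ and $t>\kappa^{-1/3}$ separately, exactly as in the proof of Lemma \ref{lemma:stabilitytrivial}, and then to identify a distinguished solution by a shooting/contraction argument. For the energy estimates, I would compute $\frac{d}{dt}(|f|^2+|g|^2)$. The coupling terms $f'=-\alpha g$ and the $\alpha f$ term in $g'$ cancel by antisymmetry, so
\begin{align*}
  \tfrac{d}{dt}(|f|^2+|g|^2) = 2\left(\tfrac{2t}{1+t^2} - \kappa(1+t^2)\right)|g|^2.
\end{align*}
On $0<t<\kappa^{-1/3}$ one discards the (negative) dissipative term and bounds $\frac{2t}{1+t^2}\le \frac{2}{1+t^2}\cdot t \le \ldots$; more cleanly, $\frac{d}{dt}(|f|^2+|g|^2) \le \frac{4t}{1+t^2}(|f|^2+|g|^2)$, and Gronwall gives the factor $\exp\!\big(\int_0^t \frac{4s}{1+s^2}ds\big)=(1+t^2)^2$. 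On $t>\kappa^{-1/3}$ we have $1+t^2\ge t^2\ge \kappa^{-2/3}$, hence $\kappa(1+t^2)\ge \kappa^{1/3}\ge \frac{2t}{1+t^2}$ once $t$ is moderately large (and in any case $\frac{2t}{1+t^2}\le\frac{1}{t}\le \kappa^{1/3}$ there up to constants — this needs a short check, possibly enlarging the crossover time by an absolute constant), so the right-hand side is $\le 0$ and the energy is non-increasing, giving the second bound. This part is routine; the only mild care is making the crossover threshold match the statement.

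For the distinguished solution I would exploit that as $t\to\infty$ the $g$-equation becomes strongly damped: $g' = -\kappa(1+t^2)g + \alpha f + O(t^{-1})g$, so heuristically $g \approx \frac{\alpha f}{\kappa(1+t^2)} \to 0$ while $f' = -\alpha g \approx -\frac{\alpha^2 f}{\kappa(1+t^2)}$, which is integrable in $t$, so $f$ converges to a finite limit. To make this rigorous and pin the limit to a prescribed value $\epsilon$, I would set up a fixed-point argument in backward-from-infinity form: on the interval $[T,\infty)$ with $T = 4\beta^{-1} = 4\alpha^2/\kappa$, seek $(f,g)$ with $f(\infty)=\epsilon$, $g(\infty)=0$, writing the system as an integral equation
\begin{align*}
  f(t) &= \epsilon + \alpha\int_t^\infty g(s)\,ds,\\
  g(t) &= -\int_t^\infty e^{-\int_s^t \kappa(1+\tau^2) - \frac{2\tau}{1+\tau^2}\,d\tau}\,\alpha f(s)\,ds,
\end{align*}
(note the sign: the integrating factor decays going forward, so Duhamel is run from $+\infty$), and show this map is a contraction on a small ball in, say, $\sup_{t\ge T}\big(|f-\epsilon| + \kappa t^2 |g|\big)$. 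The weight $\kappa(1+s^2)$ in the exponential gives $|g(t)| \lesssim \frac{\alpha}{\kappa t^2}\sup|f| \lesssim \frac{\alpha}{\kappa t^2}\epsilon$ for $t\ge T$, and then $|f(t)-\epsilon| \le \alpha\int_t^\infty |g| \lesssim \frac{\alpha^2}{\kappa T}\epsilon = \frac{1}{4}\epsilon \le \frac{1}{2}\epsilon$; the choice $T=4\beta^{-1}$ is precisely what makes this constant come out below $\frac12$. Contraction follows the same estimates applied to differences. Finally, $|g(t)|\lesssim \frac{\alpha\epsilon}{\kappa t^2}\to 0$ gives the stated decay, and by backward uniqueness for the linear ODE this solution extends uniquely to all of $[0,\infty)$.

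The main obstacle is the bookkeeping in the fixed-point step: one must check that the integrating factor $\exp\!\big(-\int_s^t (\kappa(1+\tau^2)-\frac{2\tau}{1+\tau^2})\,d\tau\big)$ is genuinely a decaying kernel for $s<t$ in the regime $t\ge T$ — i.e. that the benign growth term $\frac{2\tau}{1+\tau^2}$ (whose integral contributes only a factor $\frac{1+t^2}{1+s^2}$, polynomially bounded) is dominated by the $\kappa\tau^2$ dissipation on $[T,\infty)$ — and to track the dependence on $\alpha$ and $\kappa$ through $\beta=\kappa/\alpha^2$ so that all smallness requirements reduce to the single condition $t\ge 4\beta^{-1}$. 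Everything else is Gronwall and a standard Banach fixed point.
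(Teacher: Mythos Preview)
Your energy argument for the first part is correct and matches the paper exactly.

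For the distinguished solution, however, your Duhamel formula for $g$ is written in the wrong direction. The equation $g' + p(t)g = \alpha f$ with $p(t)=\kappa(1+t^2)-\tfrac{2t}{1+t^2}$ is \emph{forward}-stable, so the kernel $e^{-\int_s^t p}$ with $s>t$ equals $e^{+\int_t^s p}\sim e^{\kappa s^3/3}$ and your integral $\int_t^\infty$ diverges. More fundamentally, because every bounded solution of this system already satisfies $g(t)\to 0$ (by the energy bound plus forward Duhamel for $g$), prescribing $g(\infty)=0$ does not single out a solution; the backward-from-infinity problem for the pair $(f,g)$ is underdetermined by one dimension. Your estimates for $|g|\lesssim \alpha/(\kappa t^2)$ and $|f-\epsilon|\le \tfrac14\epsilon$ are morally right, but the fixed-point map as written is not well defined.

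The paper sidesteps this entirely by \emph{prescribing the data at the finite time} $t_0=4\beta^{-1}$, namely $f(t_0)=\epsilon$, $g(t_0)=0$, and then running forward Duhamel from $t_0$. Substituting the formula for $g$ into $f'=-\alpha g$ and applying Fubini collapses the double integral to
\[
f(t)=\epsilon-\tfrac{1}{\beta}\int_{t_0}^{t}\frac{f(\tau)}{1+\tau^2}\bigl(1-e^{-\kappa(\,\cdot\,)}\bigr)\,d\tau,
\]
from which $|f(t)-\epsilon|\le \tfrac{2}{\beta t_0}\epsilon=\tfrac12\epsilon$ follows by a direct bootstrap; the decay $g\to 0$ then comes from forward Duhamel. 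The required initial data at $t=0$ is obtained simply by solving the linear ODE backward from $t_0$. This is both simpler and avoids the contraction machinery; if you want to keep your approach, run $g$ forward from $T$ with $g(T)=0$ and only $f$ backward from infinity, and the rest of your estimates go through.
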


\begin{proof}[Proof of Lemma \ref{lemma:ode}]
We first observe that by anti-symmetry of the coefficients it holds that 
\begin{align*}
    \dt (|f|^2+ |g|^2) = 2|g|^2 \left( - \kappa (1+t^2) +\tfrac {2t}{1+t^2}\right).
\end{align*}
In particular, for $t>\kappa^{-1/3}$ the last factor is negative and hence $|f|^2+|g^2|$ is non-increasing.
For times smaller than this, we may derive a first rough bound from the estimate
\begin{align*}
    \dt (|f|^2+ |g|^2) \le (|f|^2+|g|^2) \tfrac {4t}{1+t^2},
\end{align*}
which yields an algebraic lower and upper bound growth bound. We next turn to the case of special data, due to lower and upper norm bounds  \eqref{fg_est} is time reversible. Therefore, we can obtain 
\begin{align*}
    f(t_0)=1, \qquad g(t_0)=0
\end{align*}
for $t_0=4\beta^{-1}$. Then  we deduce 
\begin{align*}
    g(t) &=\alpha \int_{t_0}^t d\tau  \ \exp(-\kappa (t-\tau +\tfrac 1 3 (t^3-\tau^3))) \tfrac {1+t^2}  {1+\tau^2}f(\tau ) 
\end{align*}
and thus
\begin{align*}
    f(t) &= 1-\alpha \int_{t_0}^t d\tau_1  g(\tau_1)\\
    &= 1-\tfrac \kappa \beta  \int_{t_0}^t d\tau_1  (1+\tau_1^2)\int_{t_0}^{\tau_1 }  d\tau_2   \ \exp(-\kappa (\tau_1 -\tau_2 +\tfrac 1 3 (\tau_1^3-\tau^3_2))) \tfrac 1 {1+\tau^2_2}f(\tau_2 )\\
    &= 1 -\tfrac 1 \beta \int^t_{t_0} d\tau_2  \tfrac 1 {1+\tau^2_2}f(\tau_2 )(1-\exp(-\kappa(t-\tau_2 +\tfrac 1 3 (t^3-\tau_2^3)))).
\end{align*}
This gives the estimate
\begin{align}
    0\le 1-f(t) &\le \tfrac 2{\beta t_0 }\label{fest},
\end{align}
which implies that after time $t_0$  the value of $f$ satisfies the same bound. Similarly, for $g$ we recall that 
\begin{align*}
    \partial_s g &= (\tfrac {2t}{1+t^2} - \kappa (1+t^2))   g(t)+\alpha f(t)
\end{align*}
and hence for $t_1 = 2\kappa^{-\frac 13 }$ it holds that
\begin{align*}
    g(t_1) &\le \alpha \tfrac {t^2_1}{t_0^2} .
\end{align*}
Furthermore, this implies that for times $t\ge t_1$ it holds that
\begin{align*}
    g(t) &\le  \alpha \tfrac {t^2_1}{t_0^2} \exp(-\tfrac \kappa 3 t^2 (t-t_1 ) ) + \alpha \int_{t_1}^t \exp(-\tfrac \kappa 3 (1+t^2) (t-\tau ))\\
    &\le \alpha \tfrac {t^2_1}{t_0^2} \exp(-\tfrac \kappa 3t^2 (t-t_1 ) ) + \tfrac {3\alpha}  {\tfrac \kappa 3(1+t) ^2 } .
\end{align*}
Finally, for times  $t\gg \kappa^{-\frac 13 }$ we may estimate  
\begin{align}
    g&\le  \tfrac {4\alpha} {\kappa t ^2 } .  \label{gest} 
\end{align}

\end{proof}

\subsection{Paraproducts and an Echo Model}
\label{sec:toy}
As mentioned in Section \ref{sec:intro} the main mechanism for nonlinear instability is expected to be given by the repeated interaction of high- and low-frequency perturbations of the stationary solution \eqref{eq:groundstate}.
In the following we introduce a model highlighting the role of the traveling waves and discuss what stability and norm inflation estimates can be expected.

For this purpose we note that the nonlinear MHD equations \eqref{wphi} for the perturbations $w, \phi$ of the groundstate \eqref{eq:groundstate} in coordinates $(x-ty,y)$ can be expressed as 
\begin{align}
\label{eq:nlsystem}
\begin{split}
    \dt w + \nabla^{\perp} \Delta_t^{-1} w \cdot \nabla w &=\alpha \partial_x \Delta \phi + \nabla^\perp \phi \cdot \nabla \Delta_t \phi, \\
    \dt \phi + \nabla^{\perp} \Delta_t^{-1} w \cdot \nabla \phi &=\alpha \partial_x \Delta^{-1} w +  \kappa \Delta_t \phi, \\
    \Delta_t &= \p_x^2 + (\p_y-t\p_x)^2,
\end{split}
\end{align}
where we used cancellation properties of $\nabla^\perp \cdot \nabla$.
The stability result of Lemma \ref{lemma:stabilitytrivial} considered the linearized problem around the trivial solution $(0,0)$, which removes all effects of the nonlinearities.
In order to incorporate these effects into our model we thus consider the nonlinear equations as a coupled system for the low frequency part of the solution $(w_{low}, \phi_{low})$ (defined as the Littlewood-Payley projection to frequencies $<N/2$ for some dyadic scale $N$) and the high frequency part $(w_{hi}, \phi_{hi})$.
If we for the moment consider the low frequency part as given then the action of the nonlinearities on the high frequency perturbation of the vorticity can be decomposed as
\begin{align}
\label{eq:para}
    \nabla^{\perp} \Delta_t^{-1} w_{low} \cdot \nabla w_{hi} + \nabla^{\perp} \Delta_t^{-1} w_{hi} \cdot \nabla w_{low} + \nabla^{\perp} \Delta_t^{-1} w_{hi} \cdot \nabla w_{hi}.
\end{align}
Here the first term is of transport type and hence unitary in $L^2$ and we expect $\nabla^{\perp} \Delta_t^{-1} w_{low}$ to decay sufficiently quickly in time that this term should not yield a large contribution to possible norm inflation.
Similarly for the last term we note that both factors are at comparable frequencies and that we by assumption consider a small high frequency perturbation  and thus this term is also not expected to have a large impact on the evolution.
The main norm inflation mechanism thus is expected to be given by the high frequency velocity perturbation interacting with a non-trivial low frequency vorticity perturbation.

In order to build our toy model we thus focus on this part and formally replace $w_{low}, \phi_{low}$ by the traveling waves, which are solutions of the nonlinear problem.
Furthermore,  as a simplification by a similar reasoning as above we also fix the underlying shear flow for our model.
Then the equations for the (high frequency part of the) current perturbation 

$j=\Delta \phi $ and vorticity perturbation $w$ read
\begin{align}
\begin{split}\label{finEq}
    \partial_t w &=  \alpha \partial_x j  -(2c \sin(x)\partial_y\Delta_t^{-1} w)_{\neq} \\
    \partial_t j  &=\kappa \Delta_t j_{\neq}   +\alpha \partial_x w - 2 \partial_x \partial_y^t \Delta^{-1}_t j,
\end{split}
\end{align}
where we also simplified to $f(t)=2c, g(t)=0$.

We note that compared to the linearization around the stationary solution these equations break the decoupling in Fourier space. Indeed taking a Fourier transform and relabeling $j \mapsto - i j$ we arrive at
\begin{align}
\label{wpsi1}
\begin{split}
    \partial_t w(k)  &=  -\alpha k j(k) -c \tfrac \xi {(k+1)^2 }\tfrac 1 {1+( \frac \xi {k+1}-t )^2} w(k+1)+c \tfrac \xi {(k-1)^2 }\tfrac 1 {1+( \frac \xi {k-1}-t )^2} w(k-1),\\
    \partial_t j(k) &   =(2 \tfrac {t-\frac \xi k  }{1+(\frac \xi k -t )^2 } -\kappa k^2 (1+ (\tfrac \xi k -t )^2 ) ) j(k)  +\alpha k w(k).
\end{split}
\end{align}
Furthermore, if $t \approx \frac{\xi}{k}$ then the additional term is of size $c \tfrac \xi {(k)^2 }$ and hence can possibly lead to a very large change of the dynamics.
In reference to the experimental results mentioned in Section \ref{sec:intro} we can interpret this as the low frequency and high frequency perturbation resulting in an ``echo'' around the time $t \approx \frac{\xi}{k}$. For the following toy model we neglect all modes except those at frequency $k$ and $k-1$ and only include the action of the resonant mode $k$ on the non-resonant mode $k-1$.

\begin{lemma}[Toy model]
\label{lem:toy}
    Let $c,\kappa ,  \alpha$ be as in Theorem \ref{thm:main} such that $\beta =\tfrac \kappa {\alpha^2 }\ge  \pi$ and consider the Fourier variables   $k\geq 2$ and $\xi\ge 10 \max( \kappa^{-1} , \tfrac {k^2 } c ) $. Then for
    \begin{align*}
        t_{k}:=\frac{1}{2} (\frac{\xi}{k+1} + \frac{\xi}{k}) < t < \frac{1}{2} (\frac{\xi}{k} + \frac{\xi}{k-1})=: t_{k-1} 
    \end{align*}
    we consider the toy model 
    \begin{align}
    \begin{split}
        \dt w(k) &= -\alpha k j (k) ,\\
        \partial_t j(k)    &=(2 \tfrac {t-\frac \xi k  }{1+(\frac \xi k -t )^2 } -\kappa k^2 (1+ (\tfrac \xi k -t )^2 ) ) j(k)  +\alpha k w(k), \\
        \dt w(k-1) &= - \alpha (k-1) j(k-1)+ c \tfrac \xi {k^2 }\tfrac 1 {1+( \frac \xi {k}-t )^2} w(k),\\
        \dt j(k-1) &= - \kappa \frac{\xi^2}{k^2} j (k-1) + \alpha (k-1) w(k-1).           
    \end{split}
    \end{align}
    Then for initial data $w(k,t_k)=1$ and $w(k-1,t_k)=j(k,t_k)=j(k-1,t_k)=0$ we estimate
    \begin{align*}
        & \quad (|w(k)|+ |w(k-1)|+ \alpha k |j(k)|+\alpha (k-1) |j(k-1)|)|_{t=t_{k-1}} 
        \leq 2 \pi c \frac{\xi}{k^2} .
    \end{align*}
    Furthermore, this bound is attained up to a loss of constant in the sense that 
    \begin{align*}
        |w(k-1,t_{k-1})| \geq \tfrac \pi 2 c \frac{\xi}{k^2} . 
    \end{align*}
\end{lemma}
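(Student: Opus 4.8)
The plan is to exploit the triangular structure of the toy model. The first two equations form a \emph{closed} subsystem for $(w(k),j(k))$ — in fact exactly the $k$-th Fourier mode of the linearisation around the stationary solution from Lemma~\ref{lemma:stabilitytrivial}, with the resonance shifted to sit at $\tfrac\xi k$ — and the last two equations drive $(w(k-1),j(k-1))$ through the single source $F(t):=c\tfrac\xi{k^2}\big(1+(\tfrac\xi k-t)^2\big)^{-1}w(k,t)$. Writing $s=s(t):=t-\tfrac\xi k$ and $a(t):=\tfrac{2s}{1+s^2}-\kappa k^2(1+s^2)$, both bounds are deduced from one scalar quantity, the \emph{echo integral}
\[
  \Lambda:=\int_{t_k}^{t_{k-1}}\frac{w(k,t)}{1+s(t)^2}\,dt ,
\]
whose natural size is $\approx\pi$, since $\xi\gg k^2/c$ forces $P:=\int_{t_k}^{t_{k-1}}(1+s^2)^{-1}dt=\big[\arctan s\big]_{t_k}^{t_{k-1}}$ to be within $O(c)$ of $\pi$.

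First I would analyse $(w(k),j(k))$. From $j(k,t_k)=0$ and $\partial_t j(k)=a(t)j(k)+\alpha k\,w(k)$ one gets, by a bootstrap on $w(k)\ge0$, that $j(k)\ge0$, hence $\partial_t w(k)=-\alpha k\,j(k)\le0$; together with $w(k,t_k)=1$ this gives $0<w(k)\le1$ on $[t_k,t_{k-1}]$, so already $\Lambda\le\int_{\bbR}(1+s^2)^{-1}ds=\pi$. To quantify how much $w(k)$ decreases, write $j(k,t)=\alpha k\int_{t_k}^t e^{\int_\tau^t a}w(k,\tau)\,d\tau$ and use $\int_\tau^t a\,d\sigma=\log\tfrac{1+s(t)^2}{1+s(\tau)^2}-\kappa k^2\big(G(s(t))-G(s(\tau))\big)$ with $G(x):=x+\tfrac{x^3}3$; since $G'=1+x^2$, the substitution $g=G(s(t))$ gives the clean estimate $\int_\tau^{t_{k-1}}e^{\int_\tau^t a\,d\sigma}\,dt\le\big((1+s(\tau)^2)\,\kappa k^2\big)^{-1}$. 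By Fubini,
\[
  1-w(k,t_{k-1})=\alpha^2k^2\int_{t_k}^{t_{k-1}}w(k,\tau)\Big(\int_\tau^{t_{k-1}}e^{\int_\tau^t a\,d\sigma}\,dt\Big)d\tau\le\frac{\alpha^2}{\kappa}\,\Lambda=\frac{\Lambda}{\beta},
\]
and by monotonicity $w(k,t)\ge1-\tfrac{\Lambda}{\beta}\ge1-\tfrac{\pi}{\beta}$ for all $t$. The same argument with $w(k)\le1$ bounds $\alpha k\,j(k)$ pointwise, and using $\xi\ge10\kappa^{-1}$ one checks that $|a(t_{k-1})|$ is large, so $\alpha k\,j(k,t_{k-1})\lesssim\beta^{-1}$.

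For $(w(k-1),j(k-1))$ the coupling $\pm\alpha(k-1)$ is antisymmetric and the damping $-\kappa\tfrac{\xi^2}{k^2}$ only helps, so $\tfrac12\tfrac{d}{dt}\big(|w(k-1)|^2+|j(k-1)|^2\big)=F\,w(k-1)-\kappa\tfrac{\xi^2}{k^2}|j(k-1)|^2\le|F|\,\big(|w(k-1)|^2+|j(k-1)|^2\big)^{1/2}$; as $F\ge0$, integrating from $t_k$ yields $\big(|w(k-1)|^2+|j(k-1)|^2\big)^{1/2}(t)\le c\tfrac\xi{k^2}\Lambda\le\pi c\tfrac\xi{k^2}$. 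A Duhamel estimate for $j(k-1)$ and $\xi\ge10k^2/c$ give the slaving bound $\alpha(k-1)|j(k-1)|\le\tfrac{\alpha^2(k-1)^2k^2}{\kappa\xi^2}\sup|w(k-1)|\le\tfrac{c^2}{100\beta}\pi c\tfrac\xi{k^2}$, with time integral $\le 2c$. Combining, at $t=t_{k-1}$
\[
  |w(k)|+|w(k-1)|+\alpha k|j(k)|+\alpha(k-1)|j(k-1)|\le1+\beta^{-1}+\pi c\tfrac\xi{k^2}+O\big(c^3\tfrac\xi{k^2}\big)\le2\pi c\tfrac\xi{k^2},
\]
where $c\tfrac\xi{k^2}\ge10$ absorbs the $O(1)$ terms. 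For the lower bound, $w(k-1,t_{k-1})=\int_{t_k}^{t_{k-1}}F\,dt-\alpha(k-1)\int_{t_k}^{t_{k-1}}j(k-1)\,dt\ge c\tfrac\xi{k^2}\Lambda-2c$, and $\Lambda\ge(1-\tfrac{\Lambda}{\beta})P$, i.e.\ $\Lambda\ge\tfrac{P\beta}{\beta+P}$; with $\beta\ge\pi$ and $P\ge\pi-O(c)$ this is $\ge\tfrac\pi2-O(c)$, whence $|w(k-1,t_{k-1})|\ge\tfrac\pi2 c\tfrac\xi{k^2}$.

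The main obstacle is the second step, and in particular that the lower bound is produced by the \emph{tight} fixed-point inequality $\Lambda\ge\tfrac{P\beta}{\beta+P}$, which yields exactly $\tfrac\pi2$ at $\beta=\pi$: one must keep careful track of the small corrections discarded above — the factor $1-e^{-\kappa k^2(G(s(t_{k-1}))-G(s(\tau)))}$ in the bound for $1-w(k)$, and the $O(c)$ arctangent tails controlled by $\xi\ge10k^2/c$, $c\le10^{-4}$ — against the $\tfrac\pi2$-versus-$2\pi$ margins, presumably using the auxiliary growth-factor estimates of Appendix~\ref{LApp}. The remaining ingredients (the antisymmetric energy identities, the Duhamel slaving of $j(k-1)$, and the $G$-substitution) are soft.
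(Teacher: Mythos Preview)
Your proposal is correct and follows essentially the same route as the paper: the time shift $s=t-\xi/k$, the Duhamel representation of $j(k)$, the Fubini swap together with the $G$-substitution $G(x)=x+x^3/3$ yielding the key bound $\int_\tau^{t_{k-1}}e^{\int_\tau^t a}\,dt\le\big(\kappa k^2(1+s(\tau)^2)\big)^{-1}$, and then feeding the resulting source into the forced $(k-1)$-system. The paper writes the outcome as the integral identity $w(k,s)=1-\tfrac1\beta\int_{s_0}^s\tfrac{w(k,\tau)}{1+\tau^2}\big(1-e^{-\kappa k^2(G(s)-G(\tau))}\big)\,d\tau$, which is exactly your Fubini computation, and treats the $(k-1)$ modes by a second Duhamel bootstrap rather than your energy identity, but the estimates coincide.

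The one place where the paper is marginally sharper is precisely the ``tight fixed-point'' you flag: instead of the uniform bound $w(k)\ge1-\Lambda/\beta$ (which leads to $\Lambda\ge P\beta/(\beta+P)$), the paper keeps the \emph{pointwise} bound $w(k,\tau)\ge1-\tfrac1\beta(\arctan(\tau)+\tfrac\pi2)$ and integrates directly against $(1+\tau^2)^{-1}$, obtaining $\Lambda\ge P-\tfrac1\beta\int\tfrac{\arctan\tau+\pi/2}{1+\tau^2}\,d\tau\approx\pi-\tfrac{\pi^2}{2\beta}$. At $\beta=\pi$ both give exactly $\pi/2$, so the tightness you worry about is intrinsic to the statement rather than to your packaging; the $O(c)$ corrections from the arctangent tails and the $(1-e^{-\kappa k^2(\cdots)})$ factor are handled in the paper just as you outline, without appeal to Appendix~\ref{LApp}.
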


\begin{proof}[Proof of Lemma \ref{lem:toy}]
    We perform a shift in time such that $t=\tfrac \xi k +s $ and thus $s_0:=-\tfrac \xi 2 \tfrac1  {k^2+k}\le s \le \tfrac \xi 2 \tfrac1  {k^2-k} =:s_1$. Integrating the equations in time, for our choice of initial data we obtain that
\begin{align*}
        j(s,k ) &= \alpha k \int_{s_0}^t   \tfrac {1+s^2}{1+\tau_2^2} \exp(-\kappa k^2 (s-\tau_2+\tfrac 1 3 (s^3 -\tau^3_2))) w(k,\tau_2) d\tau_2
\end{align*}
and thus 
\begin{align*}
    w(k,s)&= 1-\alpha k \int  j(\tau_1,k )\ d\tau_1 \\
    &= 1-\alpha^2  k^2 \int_{s_0}^{s} \int_{s_0}^{\tau_1 }  \tfrac {1+\tau_1^2}{1+\tau_2^2} \exp(-\kappa k^2 (\tau_1-\tau_2+\tfrac 1 3 (\tau_1 ^3 -\tau^3_2))) w(k,\tau_2 )d\tau_2  d\tau_1.
\end{align*}
For the second term we insert $\alpha^2=\frac \kappa \beta $ and deduce that
\begin{align*}
\begin{split}
    \frac \kappa \beta k^2 \int_{s_0}^{s} \int_{s_0}^{\tau_1 }  &\tfrac {1+\tau_1^2}{1+\tau_2^2} \exp(-\kappa k^2 (\tau_1-\tau_2+\tfrac 1 3 (\tau_1 ^3 -\tau^3_2))) w(k,\tau_2 ) d\tau_2 d\tau_1 \\
    &= \frac 1 \beta  \int_{s_0}^{s }  \tfrac 1{1+\tau_2^2} \int_{\tau_2 }^{s} \kappa  k^2 (1+\tau_1^2) \exp(-\kappa k^2 (\tau_1-\tau_2+\tfrac 1 3 (\tau_1 ^3 -\tau^3_2))) w(k,\tau_2 ) d\tau_1d\tau_2 \\
    &= \frac 1 \beta  \int_{s_0}^{s }  \tfrac {w(k,\tau_2)} {1+\tau_2^2}\left[ \exp(-\kappa k^2 (\tau_1-\tau_2+\tfrac 1 3 (\tau_1 ^3 -\tau^3_2)))  \right]^{\tau_1 =s }_{\tau_1 =\tau_2 } d\tau_2 \\
    &=  \frac 1 \beta  \int_{s_0}^{s }  \tfrac {w(k,\tau_2)} {1+\tau_2^2} (1- \exp(-\kappa k^2 (s-\tau_2+\tfrac 1 3 (s ^3 -\tau^3_2)))) d\tau_2.
\end{split} 
\end{align*}
This further yields that
\begin{align}
    w(k,s)&= 1-\frac 1 \beta  \int_{s_0}^{s } d\tau_2 \tfrac {w(k,\tau_2)} {1+\tau_2^2} (1- \exp(-\kappa k^2 (s-\tau_2+\tfrac 1 3 (s ^3 -\tau^3_2)))).\label{wk_rel}
\end{align}
Therefore, if $1\ge w(k,s)\ge 0$ we obtain 
\begin{align*}
    \vert w(k,s )-1 \vert &\le 
     \frac 1 \beta  \int_{s_0}^{s }  \tfrac 1{1+\tau_2^2}d\tau_2 \le \tfrac 1 \beta (\arctan(s)+\tfrac \pi 2 ).
\end{align*}
and by bootstrap this assumption holds for all times if $\beta \ge \pi $. For the current $j(k)$ we similarly estimate
\begin{align*}
     \int_{s_0}^{s_1}   \tfrac {1+s^2}{1+\tau_2^2} & \exp(-\kappa k^2 (s-\tau_2+\tfrac 1 3 (s^3 -\tau^3_2))) w(k,\tau_2 )d\tau_2\\
    &\le    (\int_{s_0}^{\frac \xi{5k^2} }+\int_{\frac \xi{5k^2} }^{s_1})   \exp(-\kappa k^2 (s-\tau_2+\tfrac 1 3 (s^3 -\tau^3_2)))d\tau_2 \\
    &\le  \tfrac 1 {\kappa k^2 }  (\exp(-\kappa \xi (\tfrac 1 3 + 3^{-4}\tfrac {\xi^2}{k^4}))+ \tfrac 4 {\eta^2} ) \le \tfrac c {\kappa k^2 } , 
\end{align*}
which yields 
\begin{align*}
        \alpha k j(s,k ) &\le  (\alpha k)^2 \tfrac c {\kappa k^2 } = \tfrac c {\beta }. 
\end{align*}
Concerning the $k-1$ mode we argue similarly and write
\begin{align*}
    j(k-1) &= \alpha (k-1) \int \exp (\kappa \tfrac {\xi^2}{k^2} (s-\tau) ) w(k-1)  d\tau 
\end{align*}
and
\begin{align*}
    w(k-1)&=c\tfrac \xi {k^2} \int \tfrac 1 {1+\tau^2} w(k)d\tau -\alpha(k-1) \int j(k-1) \ d\tau_1\\
    &=c\tfrac \xi {k^2} \int \tfrac 1 {1+\tau^2} w(k)d\tau \\
    &-\alpha^2(k-1)^2 \iint   \exp (\kappa \tfrac {\xi^2}{k^2} (\tau_1-\tau_2) ) w(\tau_2, k-1) d\tau_2 d\tau_1    \\
    &= c\tfrac \xi {k^2} \int_{s_0}^{s_1} \tfrac 1 {1+\tau^2} w(k)d\tau - \tfrac {\alpha^2k^2 (k-1)^2} {\kappa \xi^2 } \int d\tau_2  w(\tau_2, k-1).
\end{align*}
Since
\begin{align*}
    \vert \tfrac {\alpha^2k^2 (k-1)^2} {\kappa \xi^2 } \int d\tau_2 \vert  &\le \tfrac 1 {\beta \frac \xi {k^2} }
\end{align*}
we deduce by bootstrap that
\begin{align*}
    \vert w(k-1)\vert &\le 2 \pi c\tfrac \xi {k^2}
\end{align*}
and thus 
\begin{align*}
    \vert w(k-1)- \pi c\tfrac \xi {k^2}\vert &\le c\tfrac \xi {k^2} \int_{\tau\notin[s_0,s_1]} \tfrac 1 {1+\tau^2} d\tau\\
    &+ c\tfrac \xi {k^2} \tfrac 1 \beta \int_{s_0}^{s_1} \tfrac 1 {1+\tau^2}(\arctan(\tau)-\tfrac \pi 2 ) d\tau+ \tfrac 1 {\beta \frac \xi {k^2} }2 \pi c\tfrac \xi {k^2}\\
    &\le \pi  c\tfrac \xi {k^2 } ( \tfrac 2\pi \tfrac {k^2} \xi +\tfrac {\pi}{2\beta }+ \tfrac 2 {\beta \frac \xi {k^2} })\\
    &\le \tfrac \pi 2 c\tfrac \xi {k^2}
\end{align*}
and 
\begin{align*}
    \alpha (k-1)j(k-1) &\le 2 \pi c\tfrac \xi {k^2} (\alpha (k-1))^2 \int \exp (\kappa \tfrac {\xi^2}{k^2} (s-\tau) ) d\tau \\
    &=  2 \pi c\tfrac \xi {k^2} (\alpha (k-1))^2\tfrac 1 {\kappa \frac {\xi^2}{k^2}}\\
    &= \pi c \tfrac 1{\beta  \xi  } \ll  \pi c \tfrac {\xi}{k^2}.
\end{align*}

\end{proof}
Based on this model we may thus expect that a repeated interaction or \emph{chain of resonances} starting at $k_0$
\begin{align*}
    k_0 \mapsto k_0-1 \mapsto \dots \mapsto 1
\end{align*}
results in a possible growth 
\begin{align*}
   |w(1,t_{1})| \geq  |w(k_0,t_{k_0})|  \prod_{k=1}^{k_0} C'(1+c \frac{\xi}{k^2}), 
\end{align*}
where $C'=C'(\beta)$.
Choosing $k_0\approx \sqrt{C'c\xi}$ to maximize this product and using Stirling's approximation formula we may estimate this growth by an exponential factor:
\begin{align*}
    \prod_{k=1}^{k_0} C'c \frac{\xi}{k^2} = \frac{(C'c\xi)^{k_0}}{(k_0!)^2} \approx \exp(\sqrt{C'c\xi})
\end{align*}
This suggests that stability can only be expected if the initial decays in Fourier space with such a rate, which is naturally expressed in terms of Gevrey spaces.

\begin{definition}
  \label{defi:gevrey}
  Let $s\geq 1$, then a function $u \in L^2(\T\times \R)$ belongs to the
  \emph{Gevrey class} $\mathcal{G}_{s}$ if its Fourier transform satisfies
  \begin{align*}
    \sum_k \int \exp(C |\xi|^{1/s})|\mathcal{F}(u)(k,\xi)|^2 d\xi< \infty
  \end{align*}
  for some constant $C>0$.
\end{definition}
In view of the more prominent role of the frequency with respect to $y$ and for
simplicity of notation this definition only includes $|\xi|^{1/s}$ as opposed to
$(|k|+|\xi|)^{1/s}$ in the exponent. All results in this article also extend to
more general Fourier weighted spaces $X$ (see Definition \ref{defi:X}) with respect to $x$ with norms
\begin{align*}
    \sum_k \int \exp(C |\xi|^{1/s}) \lambda_k|\mathcal{F}(u)(k,\xi)|^2 d\xi.
\end{align*}
We remark that any Gevrey function is also an element of $H^N$ for any $N \in
\N$ and that Gevrey classes are nested with the strongest constraint, $s=1$,
corresponding to analytic regularity with respect to $y$.

As the main result of this article and as summarized in Theorem \ref{thm:main} we show that the above heuristic model's prediction is indeed accurate and that the optimal regularity class for the (simplified) linearized MHD equations around a traveling wave are given by Gevrey $2$.

\subsection{Magnetic Dissipation, Coupling and the Influence of $\beta$} 
\label{sec:betainf}
In the preceding proof we have seen that the interaction of interaction of $w(k)$ and $j(k)$ is determined by the combination of the action of the underlying magnetic field of size $\alpha$ and magnetic resistivity $\kappa>0$ through the parameter 
\begin{align*}
    \beta =\tfrac \kappa {\alpha^2 }.
\end{align*}
More precisely, we recall that ignoring the influence of neighboring modes $w(k)$ and $j(k)$ are solutions of a coupled system:
\begin{align*}
    \begin{split}
        \partial_t w(k) &= -\alpha k j (k) ,\\
        \partial_t j(k)    &=(2 \tfrac {t-\frac \xi k  }{1+(\frac \xi k -t )^2 } -\kappa k^2 (1+ (\tfrac \xi k -t )^2 ) ) j(k)  +\alpha k w(k).         
    \end{split}
\end{align*}
Hence starting with data $w(k,s_0)=1, j(k,s_0)=0$ three different mechanisms interact to determine the size of $w(k,s)$:
\begin{itemize}
    \item The vorticity $w(k,s)$ by means of the constant magnetic field generates a current perturbation $j(k,s)$.
    \item The current perturbation $j(k,s)$ is damped by the magnetic resistivity.
    \item The current $j(k,s)$ in turn by means of the constant magnetic field acts on the vorticity and damps it.
\end{itemize}
In this system several interesting regimes may arise, which are distinguished by the parameter $\beta$.

In the limit of infinite dissipation, $\beta \rightarrow \infty$, the current is rapidly damped and the system hence formally reduces to the Euler equations
\begin{align*}
    \dt w(k)&= 0,\\
    j(k)&=0,
\end{align*}
where $w(k,s)$ remains constant in time.

As the opposite extremal case, if $\beta \downarrow 0$ we obtain the inviscid MHD equations and the system
\begin{align*}
\partial_s w(k) &= -\alpha k j (k) ,\\
\partial_s j(k)   & = 2 \tfrac {s}{1+s^2}  j(k)  +\alpha k w(k).
\end{align*}
Hence at least for $|s|$ large this suggests that 
\begin{align*}
    w(k) \approx c_1 (1+s)\cos(\alpha k s), \quad j(k) \approx c_1 (1+s) \sin(\alpha k s).
\end{align*}
In particular, in stark contrast to the Euler equations (i.e. $\alpha=0$) for the inviscid MHD equations with a magnetic field the vorticity $w(k)$ and current perturbations $j(k)$ cannot be expected to remain close to $1$  and $0$, respectively.

This article considers the regime $0<\beta < \infty$, where the interaction of both extremal phenomena results in behavior which is qualitatively different from both limiting cases.
Indeed, recall that by a repeated application of Duhamel's formula $w(k,s)$ satisfies the integral equation \eqref{wk_rel}:
\begin{align*}
     w(k,s)&= 1-\frac 1 \beta  \int_{s_0}^{s } \tfrac {w(k,\tau_2)} {1+\tau_2^2} (1- \exp(-\kappa k^2 (s-\tau_2+\tfrac 1 3 (s ^3 -\tau^3_2)))) d\tau_2
\end{align*}
Hence, as a first case which we also discussed in the toy model of Lemma \ref{lem:toy}, if we restrict to $\beta \ge \pi$ then the integral term is bounded and small 
\begin{align*}
    \frac 1 \beta  \int_{s_0}^{s } d\tau_2 \tfrac 1 {1+\tau_2^2}\le 1. 
\end{align*}
Hence, for large $\beta$ the integral term can be treated as a perturbation and $w(k,s)$ remains comparable to $1$ uniformly in $s$ and thus close to the Euler case. However, unlike for the Euler equations the evolution of the current remains non-trivial.

If instead $0<\beta < \pi$ we obtain different behaviour depending on the dissipation $\kappa k^2$, the size of the magnetic field and the frequencies considered, whose interaction determines the behavior of the solution.
More precisely, considering the integrand
\begin{align*}
   \frac{1}{1+\tau_2^2} (1- \exp(-\kappa k^2 (s-\tau_2+\tfrac 1 3 (s ^3 -\tau^3_2)))),
\end{align*}
we observe that for $\kappa k^2\gg 1$ large  the magnetic dissipation is very strong and hence the integrand is well-approximated by $\tfrac{1}{1+\tau_2^2}$.
In particular, this suggests that for these $s$ it holds that
\begin{align*}
    w(k,s)&\approx 1-\frac 1 \beta  \int_{s_0}^{s } d\tau_2 \tfrac {w(k,\tau_2)} {1+\tau_2^2},\\
    \Leftrightarrow \ \partial_s w(k,s) &\approx -\tfrac 1\beta \tfrac 1 {1+s^2}w(k,s), \\
    \Leftrightarrow   \quad  w(k,s)&\approx \exp(-\tfrac 1 \beta (\arctan(s)+\tfrac \pi 2 ))w(k,s_0),
\end{align*}
and hence $w(k,s)$ might decay by a factor comparable to $\exp(-\frac{\pi}{\beta})$.

If instead $ \kappa k^2\leq 1$ is small, different effects interact and involve the following natural time scales:
\begin{itemize}
    \item Mixing enhanced magnetic dissipation becomes relevant on time scales $(\kappa k^2)^{-1/3}\gg 1$.
    \item The resonant interval $I^k$ is of size about $\frac{\xi}{k^2}$.
    \item Within this resonant interval most of the $L^1$ norm of $\frac{1}{1+\tau_2^2}$ is achieved on a much smaller sub-interval of size about $1$.
\end{itemize}
Hence, for times  $|s|< s^\ast \ll (\kappa k^2)^{-1/3}$ which are small compared to the disspation time scale the integrand is small and we may therefore expect that 
\begin{align*}
   w(k,s)\approx 1 
\end{align*}
remains constant.
If we instead consider very large times  $|s| \gg (\kappa k^2)^{-1/3}\gg s^\ast$ in view of the exponential factor and the decay of $\tfrac{1}{1+\tau_s^2}$ the size of $w(k,s)$ should largely be determined by the action of the time interval $(-s^\ast, s^\ast)$, that is 
\begin{align*}
      w(k,s)    &\approx 1-\frac 1 \beta  \int_{-s^\ast }^{s^\ast  } \tfrac 1 {1+\tau_2^2} d\tau_2  \\
    &\approx 1-\tfrac\pi \beta,
\end{align*}
provided such such $s$ exist, that is if the size $\tfrac{\xi}{k^2}$ of $I^k$ is much bigger than the dissipative time scale.
In particular, the size of $w(k,s)$ transitions from being close to $1$ for $|s|<s^{\ast}$ to being very far from $1$ for $|s| \gg (\kappa k^2)^{-1/3}$ and further needs to be controlled on intermediate time scales.
These different regimes all have to be considered in the upper and lower bounds of Section \ref{Ik} and we in particular need to control the size of $w(k,s)$ in order to estimate the resulting norm inflation due to resonances. For this purpose we estimate $w(k,s )$ in terms of a growth factor $L$ such that 
\begin{align*}
    \vert w(k,s)\vert &\le L w(k,s_0),
\end{align*}
as we discuss in Appendix \ref{LApp}. For our upper bounds we will require that $c L \ll 1$ is sufficiently small to control back-coupling estimates.

\section{Stability for Small and Large Times}
\label{sec:small}

In this section we establish some general estimates on the (simplified) linearized MHD equations \eqref{wpsi1}.
We note that these equations decouple with respect to $\xi$. In the following we hence treat $\xi$ as an arbitrary but fixed parameter of the equations and consider \eqref{wpsi1} as an evolution equation for the sequences $w(\cdot, \xi, t)$ and $j(\cdot, \xi,t)$.
As mentioned following the statement of Theorem \ref{thm:main} in addition to $\ell^2(\Z)$ all proofs in the remainder of the article hold for a rather general family of weighted spaces:
\begin{definition}
  \label{defi:X}
Consider a weight function $\lambda_l>0$ such that
\begin{align*}
    \sup_l \tfrac {\lambda_{l\pm 1}}{\lambda_{l} }=:  \hat \lambda <10. 
\end{align*}
Then we define the Hilbert space $X$ associated to this weight function as the set of all sequences $u: \ \bbZ \to \bbC $ such that  $(u_l\lambda_l)_l \in \ell^2$. 
\end{definition}
This definition for instance includes $\ell^2$ ($\lambda_l=1$), (Fourier transforms of) Sobolev spaces $H^s$ ($\lambda_l=1+C|l|^{2s}$ with $C>0$ sufficiently small) or Gevrey regular or analytic functions with a suitable radius of convergence. 

As sketched in Section \ref{sec:toy} for a given frequency $\xi\in \R$ we expect the norm inflation for evolution by \eqref{wpsi1} to be concentrated around times $t_k \approx \frac{\xi}{k}$ for suitable $k\in \Z$.
In particular, if the time is too large, $t>2\xi$, there exists no such $k$ and we expect the evolution to be stable.
Similarly, if $t$ is small also the size of the resonance predicted by the toy model is small and we again expect the evolution to be stable.
The results of this section show that this heuristic is indeed valid and establish stability for ``small'' and ``large'' times. The essential difficulty in proving Theorem \ref{thm:main} thus lies in control the effects of resonances in the remaining time intervals, which are studied in Section \ref{Ik}. In the following we will often write $L^\infty_t$ as the supremum norm till time $t$.

\begin{lemma}[Large time]\label{lem:LT}
Consider the equation \eqref{wpsi1} on the time interval $(2\xi, \infty)$. Then the possible norm inflation is controlled uniformly in time 
\begin{align*}
    \Vert w,j\Vert_X (t) &\le \tfrac 1{1-4c}\tfrac 1{1-2c\hat \lambda }\Vert w,j\Vert_X(2\xi),
\end{align*}
where $\hat \lambda = \max_l \tfrac {\lambda_l}{\lambda_{l\pm 1 }}$ is as in Definition \ref{defi:X}.
\begin{proof}

Let $\hat w (k) = \vert (w(k), j(k))\vert$. 
Then we infer 
\begin{align*}
    \tfrac 1 2 \partial_t \hat w^2(k) &\le (a(k-1)w(k-1) -a(k+1) w(k+1))w(k)+ b(k) j(k)^2, 
\end{align*}
where we introduced the short-hand notation $a,b$ for the coefficient functions.
Since $b(t,k) \le 0 $ for $t \ge 2 \xi$, we further deduce that 
\begin{align*}
    \tfrac 1 2 \partial_t \hat w^2 (k) &\le c\tfrac \xi {1+ ( t-\xi )^2 }(\hat w(k+1)+\hat w(k-1)+2\hat w(k) )\hat w(k) \\
    \leadsto \ \hat w^2(k,t) &< \hat w^2(k,2\xi) +  2c(\vert \hat w^2(k)\vert_{L^\infty_t}+\tfrac 1 2 \vert \hat w^2(k+1)\vert_{L^\infty_t }  +\tfrac 1 2 \vert \hat w^2(k-1)\vert_{L^\infty _t} ).
\end{align*}
Hence by a bootstrap argument we control
\begin{align*}
    \hat w^2(k,t) &\le \tfrac 1{1-4c} \sum_l (2c)^{\vert k-l\vert } \hat w^2 (l,2\xi ).
\end{align*}
Summing this estimate with the weight $\lambda_k$ then concludes the proof: 
\begin{align*}
    \Vert w,j\Vert_X (t) &\le \tfrac 1{1-4c} \sqrt{ \sum_k \lambda_k \sum_l (2c)^{\vert k-l\vert } \hat w^2(l,2\xi)}\\
    &\le \tfrac 1{1-4c} \sqrt{  \sum_l \lambda_l \hat w^2(l,2 \xi) \sum_k (2c\hat \lambda )^{\vert k-l\vert }}\\
    &\le  \tfrac 1{1-4c} \tfrac 1{1-2c\hat \lambda }  \Vert w,j\Vert_X (2\xi)
\end{align*}

\end{proof}
\end{lemma}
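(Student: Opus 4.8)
The plan is to reduce the estimate to a mode-by-mode (in the frequency $k$) differential inequality, exploit that on $(2\xi,\infty)$ the resonant times $\xi/k$ all lie in the past so that the dissipative coefficient in the $j$-equation has the favorable sign, and then re-sum the per-mode bounds against the weight $\lambda_k$. The essential quantitative input is the smallness of $c$ from Theorem~\ref{thm:main}, which guarantees $4c<1$ and $2c\hat\lambda<1$.

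First I would set $\hat w(k):=|(w(k),j(k))|$ and differentiate $\hat w^2(k)$ along \eqref{wpsi1}. The $2\times2$ coupling of $w(k)$ and $j(k)$ through $\mp\alpha k$ is skew-symmetric and so drops out of $\tfrac12\partial_t\hat w^2(k)$; what remains is the diagonal term $b(t,k)|j(k)|^2$ with $b(t,k)=2\tfrac{t-\xi/k}{1+(\xi/k-t)^2}-\kappa k^2(1+(\xi/k-t)^2)\le 0$ for $t\ge2\xi$, and the two off-diagonal terms involving $w(k\pm1)$. Since for $t\ge2\xi$ and $k\pm1\ge1$ one has $|t-\tfrac{\xi}{k\pm1}|\ge t-\xi$ and $\tfrac{\xi}{(k\pm1)^2}\le\xi$, all off-diagonal coefficients are bounded by $c\,\tfrac{\xi}{1+(t-\xi)^2}$, so that $\tfrac12\partial_t\hat w^2(k)\le c\,\tfrac{\xi}{1+(t-\xi)^2}(\hat w(k+1)+\hat w(k-1))\hat w(k)$. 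Integrating this from $2\xi$ to $t$, using $\hat w(k\pm1)\hat w(k)\le\tfrac12(\hat w(k\pm1)^2+\hat w(k)^2)$, the small-mass bound $\int_{2\xi}^\infty c\,\tfrac{\xi}{1+(t-\xi)^2}\,dt=c\,\xi\arctan(1/\xi)\le c$, and passing to the supremum in time, one arrives at the discrete inequality $|\hat w^2(k)|_{L^\infty_t}\le\hat w^2(k,2\xi)+2c\bigl(|\hat w^2(k)|_{L^\infty_t}+\tfrac12|\hat w^2(k+1)|_{L^\infty_t}+\tfrac12|\hat w^2(k-1)|_{L^\infty_t}\bigr)$.

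Since $c$ is small this discrete inequality is a contraction in $\ell^\infty(\Z)$ (the left-hand side being finite by local well-posedness), so iterating it — a geometric summation in the mode distance $|k-l|$ — gives $|\hat w^2(k)|_{L^\infty_t}\le\tfrac{1}{1-4c}\sum_l(2c)^{|k-l|}\hat w^2(l,2\xi)$. Multiplying by $\lambda_k$, summing over $k$, and using $\lambda_k\le\hat\lambda^{|k-l|}\lambda_l$ from Definition~\ref{defi:X} together with $\sum_k(2c\hat\lambda)^{|k-l|}\le\tfrac{1}{1-2c\hat\lambda}$ (which converges since $\hat\lambda<10$ and $c$ is small) then yields $\|w,j\|_X(t)\le\tfrac{1}{1-4c}\tfrac{1}{1-2c\hat\lambda}\|w,j\|_X(2\xi)$, as claimed. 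The main obstacle here is structural rather than a single delicate estimate: the perturbation in \eqref{wpsi1} couples mode $k$ to $k\pm1$, so there is no clean energy identity directly in the weighted space $X$, and one must instead first establish a purely diagonal estimate, capture the exponential decay in $|k-l|$, and only afterwards reintroduce the weight — the smallness of $c$ being exactly what makes both the bootstrap step and the final weighted geometric sum converge. The one computation that deserves some care is the sign claim $b(t,k)\le0$ for $t\ge2\xi$, which reduces to balancing the bounded mixing term $\tfrac{2u}{1+u^2}\le1$ against the growing dissipation $\kappa k^2(1+u^2)$ with $u=t-\tfrac\xi k$.
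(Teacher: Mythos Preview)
Your proposal is correct and follows essentially the same route as the paper: mode-by-mode energy via the skew-symmetric $\pm\alpha k$ coupling, the sign $b(t,k)\le0$ for $t\ge2\xi$, the uniform bound $a(k\pm1)\le c\,\xi/(1+(t-\xi)^2)$, the integrated discrete inequality, the geometric bootstrap in $|k-l|$, and the weighted resummation using $\lambda_k\le\hat\lambda^{|k-l|}\lambda_l$. The only cosmetic differences are that the paper writes an extra $+2\hat w(k)$ in the differential inequality (effectively anticipating the AM--GM step), while you apply AM--GM after integrating and are more explicit about $\int_{2\xi}^\infty \xi/(1+(t-\xi)^2)\,dt=\xi\arctan(1/\xi)\le1$.
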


Thus it suffices to study the evolution for times $t<2\xi$. In view of the estimates of Section \ref{sec:toy} it here is convenient to partition $(0,2\xi)$ into intervals where $t\approx \frac{\xi}{k}$ for some $k \in \Z$.
\begin{definition}
  \label{defi:Ik}
Let $\xi>0$ be given, then for any $k\in \N$ we define \
\begin{align*}
    t_{k}&=\frac{1}{2}(\tfrac{\xi}{k+1}+\tfrac{\xi}{k}) \text{ if } k>0, \\
    t_0 &= 2 \xi.
\end{align*}
We further define the time intervals $I^k=(t_k,t_{k-1})$, for $\xi<0$ we define $t_k$ analogously for $-k \in \N$.
\end{definition}
Note that 
\begin{align*}
     t_{k}< \tfrac{\xi}{k} < t_{k-1}
\end{align*}
and
\begin{align*}
    t_{k-1}-t_{k} = \frac{1}{2}\left(\tfrac{\xi}{k+1} - \tfrac{\xi}{k-1}\right) = \tfrac{\xi}{k^2-1}.
\end{align*}
Hence $I_k$ is an interval containing the time of resonance $\frac{\xi}{k}$ and is of size about $\frac{\xi}{k^2}$.

The next lemma provides a very rough energy-based estimate, which will allow us to control the evolution for small times and frequencies. That is, we show that it is easy to obtain a energy estimate with $\xi t$ in the exponent. If the time $t$ or the frequency $\xi$  are small this rough estimate is sufficient.
However, for Gevrey 2 norm estimates it will be  necessary to improve this control to a $C\sqrt \xi$ term in the exponent in subsequent estimates. Furthermore, we remark that also the magnetic part needs to be handled adequately, since it may give an additional growth by $\exp(\tfrac 4 3  \kappa^{-\frac 1 2 })$.

\begin{lemma}[Rough estimate]\label{lem:rough}
Consider a solution of \eqref{wpsi1}, then for fixed $\xi$  and for all times $t>0$ it holds that
\begin{align*}
    \Vert w,j\Vert_{X } (t)&\le \exp(\tfrac 4 3  \kappa^{-\frac 1 2 })  \exp((1+ \hat \lambda )  c\xi t )\Vert w,j\Vert_{X }(0) .
\end{align*}
\begin{proof}
We define $\hat w(k)  = \vert w,j\vert(k)$, then 
\begin{align*}
    \tfrac 1 2 \partial_t \hat w^2 (k) &= (a(k+1) w(k+1) - a(k-1) w(k-1)) w(k) + b(k ) j(k)^2 
\end{align*}
with $b(k,t ) =(2 \tfrac {t-\frac \xi k  }{1+(\frac \xi k -t)^2} -\kappa k^2(1+(\tfrac \xi k - t)^2))$.
We further define the define the growth factor 
\begin{align*}
    M(k,t) &= \begin{cases}
        1 & \text{ if }  t- \tfrac \xi k \le 0  \ \text{ or }  \ \kappa k^2 \ge 1,  \\
        \tfrac 1 {1+(\frac \xi k -t)^2}& \text{ if }  0\le t-\tfrac \xi k \le (\tfrac 2{ \kappa k^2})^{\frac 1 3 } \ \text{ and  }\ \kappa k^2 \le 1, \\
        \tfrac 1 {1+ (\tfrac  2{\kappa k^2 })^{\frac 13 }} & \text{ if }  (\tfrac 2 { \kappa k^2})^{\frac 1 3 }\le t-\tfrac \xi k  \text{ and  } \ \kappa k^2 \le 1.
    \end{cases}
\end{align*}
We note that this weight satisfies  $b(k,t)+ \tfrac {M'}{M}(k,t) \le 0 $. Hence, defining the energy 
\begin{align*}
   E= (\prod_l M(l,t))^2\sum_k \lambda_k  \hat w(k,t)^2, 
\end{align*}
 we deduce that
\begin{align*}
    \tfrac 1 2 \partial_t E &\le \left(\prod_l M(l,t)\right)^2\sum_k \lambda_k \left( a(k+1)\hat w(k+1,t)+a(k-1)\hat w(k-1,t)\right)\hat w(k)\\
    &\le    (\prod_l M(l,t))^2\sum_k (\lambda_k a(k)+\tfrac {a(k-1)\lambda_{k-1}+a(k+1)\lambda_{k+1}}2)\hat w^2(k,t) \\
    &= (1+ \hat \lambda )c\xi E.
\end{align*}
Applying Gronwall's inequality thus yields 
\begin{align*}
    E(t) &\le \exp(2(1+ \hat \lambda ) c\xi t ) E(0) .
\end{align*}
This in turn leads to the estimate 
\begin{align*}
    \Vert w,j\Vert_{X } &\le \exp((1+ \hat \lambda ) c\xi t ) \prod_l \vert M(l,t)\vert^{-1} \Vert w_0,j_0\Vert_{X }\\
    &\le \exp((1+ \hat \lambda ) c\xi t ) \prod_{l=0}^{ \kappa^{-\frac 1 2 }}(1+ (\tfrac  2{\kappa l^2 })^{\frac 13 }) \Vert w_0,j_0\Vert_{X }.
\end{align*}
Finally, we can use Stirling's approximation of the factorial, which results in the desired estimate:
\begin{align*}
    \Vert w,j\Vert_{X } (t)&\le  \exp(\tfrac 4 3  \kappa^{-\frac 1 2 })  \exp((1+ \hat \lambda )  c\xi t )\Vert w_0,j_0\Vert_{X } .
\end{align*}

\end{proof}
\end{lemma}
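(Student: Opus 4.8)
\emph{Overview and Step 1 (differential inequality).} For each fixed $\xi$ the system \eqref{wpsi1} is a linear evolution for the sequences $(w(k,t))_k,(j(k,t))_k$, and the plan is a weighted energy estimate in $k$: the neighbour coupling is handled by Young's inequality and contributes only the factor $e^{(1+\hat\lambda)c\xi t}$, while the indefinite zeroth-order coefficient in the $j$-equation is absorbed by a mode-by-mode time weight whose total cost is the factor $e^{\tfrac43\kappa^{-1/2}}$. Concretely, put $\hat w(k):=|(w(k),j(k))|$ and differentiate. Using \eqref{wpsi1}, the cross terms $-\alpha k\,\mathrm{Re}(\overline{w(k)}j(k))+\alpha k\,\mathrm{Re}(\overline{j(k)}w(k))$ cancel, leaving
\begin{align*}
  \tfrac12\p_t\hat w(k)^2\le a(k-1)\hat w(k-1)\hat w(k)+a(k+1)\hat w(k+1)\hat w(k)+b(k,t)|j(k)|^2,
\end{align*}
with $0\le a(m):=c\tfrac{\xi}{m^2}\tfrac1{1+(\xi/m-t)^2}\le c\xi$ and $b(k,t):=\tfrac{2(t-\xi/k)}{1+(\xi/k-t)^2}-\kappa k^2(1+(\xi/k-t)^2)$.

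\emph{Step 2 (weight, energy, Gronwall).} The only dangerous term is $b(k,t)|j(k)|^2$, since $b$ is positive on the resonant window $t>\xi/k$ until mixing-enhanced dissipation, active once $t-\xi/k\gtrsim(\kappa k^2)^{-1/3}$, turns it negative. I would introduce a non-increasing weight $M(k,t)\in(0,1]$ equal to $1$ for $t\le\xi/k$ and whenever $\kappa k^2\ge1$ (there $b\le0$ already), equal to $\tfrac1{1+(\xi/k-t)^2}$ on the transient $0\le t-\xi/k\le(2/\kappa k^2)^{1/3}$ so that $\tfrac{M'}{M}(k,t)=-\tfrac{2(t-\xi/k)}{1+(\xi/k-t)^2}$ exactly cancels the positive part of $b$, and frozen at its endpoint value afterwards; a short case distinction gives $M'\le0$ and $b(k,t)+\tfrac{M'}{M}(k,t)\le0$ for all $t$. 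Setting $E(t):=\big(\prod_lM(l,t)\big)^2\sum_k\lambda_k\hat w(k,t)^2$ and using $\sum_l\tfrac{M'}{M}(l)\le\tfrac{M'}{M}(k)$ mode-by-mode, all $b$- and $M'$-contributions combine into $\sum_k\lambda_k|j(k)|^2\big(b(k)+\tfrac{M'}{M}(k)\big)\le0$; the neighbour terms are bounded via $2\hat w(k\pm1)\hat w(k)\le\hat w(k\pm1)^2+\hat w(k)^2$, reindexing, and $a(\cdot)\le c\xi$, $\lambda_{k\pm1}\le\hat\lambda\lambda_k$, yielding $\tfrac12\p_tE\le(1+\hat\lambda)c\xi E$ and hence $E(t)\le e^{2(1+\hat\lambda)c\xi t}E(0)$.

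\emph{Step 3 (undoing the weight).} Since $M(l,0)=1$ this gives
\begin{align*}
  \Vert w,j\Vert_X(t)=\Big(\prod_lM(l,t)\Big)^{-1}\sqrt{E(t)}\le e^{(1+\hat\lambda)c\xi t}\Big(\prod_lM(l,t)^{-1}\Big)\Vert w,j\Vert_X(0),
\end{align*}
and it remains to estimate the product. Only modes with $\kappa l^2\le1$, i.e. $|l|\le\kappa^{-1/2}$, contribute a factor $\ne1$, each of size $\lesssim(\kappa l^2)^{-1/3}$, so $\prod_lM(l,t)^{-1}\le\prod_{1\le l\le\kappa^{-1/2}}(1+(2/\kappa l^2)^{1/3})$; inserting $\kappa\approx(\kappa^{-1/2})^{-2}$ and applying Stirling's approximation to the resulting factorial bounds this finite product by $\exp(\tfrac43\kappa^{-1/2})$, which is the claimed estimate.

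\emph{Main obstacle.} The delicate point is the construction and verification of the weight $M$ in Step 2: it must be tuned so that its logarithmic time derivative precisely counteracts the transient resonant growth carried by $b(k,t)$ while $M$ stays monotone, and yet the price $\prod_lM(l,t)^{-1}$ for undoing it — a loss intrinsic to having only \emph{magnetic} dissipation, absent in the trivial linearization of Lemma \ref{lemma:stabilitytrivial} — must still be kept below $\exp(\tfrac43\kappa^{-1/2})$; this is exactly where the dissipative time scale $(\kappa k^2)^{-1/3}$, the restriction to the $O(\kappa^{-1/2})$ low modes, and Stirling's formula all have to cooperate.
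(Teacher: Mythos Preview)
Your proof is correct and follows essentially the same route as the paper: the same mode-by-mode weight $M(k,t)$ to absorb the positive part of $b(k,t)$, the same weighted energy $E=(\prod_l M(l,t))^2\sum_k\lambda_k\hat w(k)^2$, the same Young-inequality/reindexing treatment of the neighbour coupling yielding the $(1+\hat\lambda)c\xi$ Gronwall rate, and the same Stirling bound on $\prod_l M(l,t)^{-1}$ over the $O(\kappa^{-1/2})$ low modes. If anything, you are slightly more explicit than the paper in justifying why the $M'$-contribution (which a priori multiplies $\hat w(k)^2$) can be paired with $b(k)|j(k)|^2$, via $\sum_l M'/M(l)\le M'/M(k)\le 0$ and $\hat w(k)^2\ge |j(k)|^2$.
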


In the following we establish upper and lower bounds for small times. Here we use that for modes $k$ such that $\tfrac \xi{k^2}$ is small nay possible resonance will not produce large enough norm inflation and the evolution can hence be treated perturbatively. 
More precisely, we consider the evolution on the time interval
\begin{align*}
I=  [0,\tfrac \xi 2(\tfrac 1 {k_0} +\tfrac 1 {k_0-1})] 
\end{align*}
for fixed $k_0$ to be determined later. For this purpose we introduce the parameter $\eta_0:= \tfrac \xi {k_0^2}$ which later will be chosen as $\eta_0\approx \tfrac 1 {10 c } $.
\begin{lemma}\label{low_time}
Let  $w, \ j $ be a solution of \eqref{wpsi1}, define $d:=c^{-1}$ and let  $\xi, k_0$ be such that $\eta_0 \le d^2 $. Then for all times $0\leq t\le  t_{k_0}$ it holds that
\begin{align*}
    \Vert w(t),j(t) \Vert_{X }^2
    &\le \exp(2(1+\hat \lambda )  \max(c \eta_0,1) \sqrt {\xi  \eta_0}   )\Vert w_0,j_0 \Vert_{X }^2\\
    &\le \exp(C \sqrt \xi    )\Vert w_0,j_0 \Vert_{X }^2.
\end{align*}
 Furthermore, suppose that $k_0\ge \kappa^{-\frac 1 2 }$ and  $10d\le \tfrac \xi {k_0^2} \le \tfrac 1 {100 c^2 } $, then for the initial data $w(k,0)=\delta_{k_0, k}$ and $j(k,0)=0$  we obtain that 
\begin{align*}
    w(k_0,t_{k_0} )&\ge \tfrac 1 2 \max (1,w(k,t_{k_0}), j (k,t_{k_0}))\\
    j(k_0,t_{k_0}) &\le \tfrac 1{\alpha_{k_0} \xi \eta_0 } .
\end{align*}

\begin{proof}
Computing the time derivative, we obtain
\begin{align*}
   \tfrac 1 2  \partial_t \Vert w, j\Vert_{X }^2 &= \sum_l (a(l+1) w(l+1)+a(l-1) w(l-1 ))\lambda_l w(l) + b(l)\lambda_l  j(l)^2,
\end{align*}
where the coefficient functions satisfy
\begin{align*}
    a(l) &\le
    \begin{cases}
         c\eta_0 & l\ge k_0  \\
         4c\tfrac 1 {1+\eta_0} & l\le k_0  
    \end{cases}
    \\
    &\le \max(c\eta_0,4c ),\\
    b(l) &\le 1.
\end{align*}
Therefore, we conclude that 
\begin{align*}
    \partial_t \Vert w,j \Vert_{X }^2 &\le 2(1+\hat \lambda ) \max( c\eta_0,1) \Vert w,j \Vert_{X }^2,\\
    \Vert w,j \Vert_{X }^2(t)&\le \exp(2(1+\hat \lambda ) \max( c\eta_0,1) t)\Vert w_0,j_0 \Vert_{X }^2,\\
    \Vert w,j \Vert_{X }^2(t_{k_0})&\le \exp(2(1+\hat \lambda ) \max( c\eta_0,1) \sqrt {\xi  \eta_0} )\Vert w_0,j_0 \Vert_{X }^2. 
\end{align*}

To prove lower bounds on the norm inflation we further need to show that for $w(k,0)=\delta_{k_0,k }$ and $j(k,0)=0$, the mode $w(k_0,t_{k_0})$ will stay the largest mode. Therefore, we introduce the short-hand notation
\begin{align*}
    \hat w(k,t) &= \vert w,j\vert(k,t) 
\end{align*}
and have to estimate the growth of $\hat w(\cdot, t)$.
Since on the interval $[0, t_{k_0} ]$ it holds that $b(k) \le 0 $ as $k_0\ge \kappa^{-\frac 12}$, we obtain the system 
\begin{align*}
    \partial_t \hat w(k) &\le a(k+1)\hat w(k+1) +a(k-1)\hat w(k-1) \\
    \hat w(k,0) &= \delta_{k,k_0} .
\end{align*}

Let $\sqrt { \xi \pi}  = l_0\ge l \ge k_0$ to be fixed later. We want to prove by induction that
\begin{align}
\begin{split}
    \hat w(m,t_{l-1}) &\le 6\pi c \eta_0  (2c)^{\vert m-k_0\vert }\\
    \hat w(l,t_{l-1}) &\le 4(2c)^{\vert l-k_0\vert }\\
    \hat w(n,t_{l-1}) &\le 2 (2c)^{\vert m-k_0 \vert  }\label{small_boot}
\end{split}
\end{align}
for all $m>l> n$. \\  
\textbf{Induction start: }   \\
We integrate $a$ in time to estimate 
\begin{align*}
    \int_0^{t_{l_0-1}} a(k)&= c\tfrac \xi{k^2} \int_0^{t_{l_0-1}} \tfrac 1{1+(\frac \xi k-t)^2}\\
    &\le \left\{\begin{array}{cc}
         \pi c \tfrac \xi{k^2 }& k> l_0  \\
         c & k\le l_0
    \end{array} \right. \\
    &\le c.
\end{align*}
Thus we obtain that 
\begin{align*}
    \hat w(k,t_{l_0-1}) &< \delta_{k_0,k} + c (\vert \hat w(k+1)\vert_{L^\infty_t}+\vert \hat w(k-1)\vert_{L^\infty_t} ),
\end{align*}
which by a bootstrap argument yields that 
\begin{align*}
    \hat w(k, t_{l_0-1}) &\le \tfrac 1 {1-2c } (2c)^{\vert k_0 - k \vert }
\end{align*}
for all $k$ which satisfy \eqref{small_boot}. \\
\textbf{Induction step: } We fix $l$ and we assume that \eqref{small_boot} holds for all $\tilde l$ with $l_0\ge \tilde l \ge l+1\ge k_0+1 $ and then prove that it holds also for $l$. We here argue by bootstrap. That is, we show that the estimate \eqref{small_boot} at least holds up until a time $t^\ast$ with $t_{l}\leq t^\ast\leq t_{l-1}$ and that the maximal time with this property is given by $t^\ast=t_{l-1}$. For $n< l $ we estimate
\begin{align*}
     \hat w(n,t_{l-1} ) &\le\delta_{n,k_0}+ \int_0^{t_{l-1}}  a(n\pm 1 , \tau) w(n\pm 1 ,\tau  )\\
    &\le \delta_{n,k_0}+  c (4 (2c)^{\vert n+1 -k_0\vert }+2 (2c)^{\vert n-1-k_0\vert })\\
    &<  2(2c)^{\vert n-k_0\vert }.
\end{align*}

To estimate the $l$ mode we estimate the integral between $t_l$ and $t_{l-1} $ to deduce 
\begin{align*}
     \hat w(l,t_{l-1} ) &\le\hat w(l,t_{l} )+   \int_{t_l}^{t_{l-1}}   a(l\pm 1 , \tau) w(l\pm 1  ,\tau )\\
     &\le  2(2c)^{l-k_0} +6 \pi   c \eta_0 (2c)^{l+1-k_0 }  + 2c (2c)^{\vert l-1-k_0\vert }\\
     &\le 4(2c)^{l-k_0 }.
\end{align*}
 For $m> l$ we split the integrals as
\begin{align*}
     \hat w(m,t_{l-1} ) &\le \hat w(m,t_{m-1 } )+ \int_{t_{m-1}}^{t_l}  a(m+1 , t) \hat w(m+ 1 ,\tau) \\
     &+ \int_{t_{m-1}}^{t_{m-2}}  a(m-1 , t) \hat w(m-1 ,\tau  )+ \int_{t_{m-2}}^{t_l} a(m-1 , t) \hat w(m- 1 ,\tau  )\\
     &\le   4(2c)^{m-k_0 } +12\pi c^2 \eta_0  (2c)^{\vert m-k_0\vert } + 4\pi  \eta_0 (2c)^{m-k_0 }+6\pi c\eta_0   (2c)^{m-k_0 }\\
     &\le 6  \pi  \eta_0 (2c)^{m-k_0 }.
\end{align*}
So we finally deduce that
\begin{align*}
    \hat w(k,t_{k_0}) \le (2c)^{\vert k-k_0\vert } \left\{\begin{array}{cc}
         4&k \le k_0+1  \\
         6\pi \eta_0 & k>k_0+1 
    \end{array}\right. 
\end{align*}
Thus we established an upper bound for all modes, the next step is to show that for $w$ indeed the $k_0$ mode is one of the largest modes. Therefore, we estimate  $j(k_0)$ by
\begin{align*}
    j (k_0,t) &= \alpha k_0 \int^t_0 d\tau \ \tfrac {1+ (\frac \xi k_0- t)^2 }{1+ (\frac \xi k_0- \tau)^2 }\exp\left( -\kappa k_0^2 (t-\tau +\tfrac 1 3 ( (\tfrac \xi {k_0}- t)^3 - (\tfrac \xi {k_0}- \tau )^3 )\right)w(k_0,\tau ) 
\end{align*}
and hence obtain that
\begin{align*}
    j(t)&\le \alpha k_0 \int \exp (-\kappa k_0^2 \eta^2 (t-\tau)\\
    &\le \tfrac 1{\sqrt{\beta \kappa \xi \eta_0^3 }}=\tfrac 1{\alpha_{k_0} \xi \eta_0 }
\end{align*}
and 
\begin{align*}
    \alpha {k_0}\int j(k_0,\tau_2 )d\tau_1 &= \tfrac {\kappa k_0^2 }\beta\int^t_0 d\tau_1 \int^{\tau_1 }_0 d\tau_2 \ \tfrac {1+ (\frac \xi k_0- \tau_1 )^2 }{1+ (\frac \xi k_0- \tau_2)^2 }
    \\ & \quad \quad \times \exp\left( -\kappa k_0^2 (\tau_1 -\tau_2 +\tfrac 1 3 ( (\tfrac \xi k_0- \tau_1 )^3 - (\tfrac \xi k_0- \tau_2  )^3 )\right)\\
    &\le \tfrac 1 {\beta } \int d\tau_1 \tfrac 1 {1+(\frac \xi {k_0} - \tau_1)^2}\\
    &\le  \tfrac 4 {\beta\eta_0 }.
\end{align*}
With this we conclude that
\begin{align*}
    \vert w(k_0 ,t_{k_0}) -1 \vert &\le \int a(k_0+1) w(k_0+1) + a(k_0-1)w(k_0-1) + \alpha k_0 j(t)\\
    &\le 16\pi \eta_0 c^2 + 8c^2+\tfrac 4 {\beta\eta_0 } \le \tfrac 1 2,
\end{align*}
which in turn yields
\begin{align*}
    \vert w(k_0 ,t_{k_0})\vert &\ge \tfrac 1 2 \ge \max_{k\neq k_0}(  \hat w(k ,t_{k_0}) , \vert j(k_0 ,t_{k_0} )\vert).
\end{align*}

\end{proof}

\end{lemma}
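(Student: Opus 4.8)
The plan is to handle the two assertions separately: the first is a soft weighted energy estimate that only uses crude size bounds for the coefficients on $[0,t_{k_0}]$, while the second needs the full mode-by-mode tracking of the incipient echo chain for the single-mode datum. For the upper bound, set $\hat w(k)=|(w(k),j(k))|$; the antisymmetry of the $\alpha\partial_x$-coupling in \eqref{wpsi1} cancels the $w(k)\overline{j(k)}$ contributions to $\tfrac12\partial_t\Vert w,j\Vert_X^2$, leaving
\[
\tfrac12\partial_t\Vert w,j\Vert_X^2=\sum_l\lambda_l\big[(a(l+1)w(l+1)+a(l-1)w(l-1))w(l)+b(l)\,j(l)^2\big],
\]
with $a(l)=c\tfrac{\xi}{l^2}\tfrac1{1+(\xi/l-t)^2}$ and $b$ the diagonal current coefficient. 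On $[0,t_{k_0}]$ no resonance time $\xi/l$ is approached for the modes that matter: for $l\ge k_0$ one has $\xi/l^2\le\eta_0$, hence $a(l)\le c\eta_0$; for $l<k_0$ the time stays at distance $\gtrsim\eta_0$ from $\xi/l$, hence $a(l)\le 4c/(1+\eta_0)\le4c$; and $b(l)\le1$. Thus $a(l)\le\max(c\eta_0,4c)$ uniformly, and a Young inequality (absorbing the weight ratios $\lambda_{l\pm1}/\lambda_l$ into $\hat\lambda$) gives $\partial_t\Vert w,j\Vert_X^2\le2(1+\hat\lambda)\max(c\eta_0,1)\Vert w,j\Vert_X^2$. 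Gronwall up to $t_{k_0}\lesssim\xi/k_0=\sqrt{\xi\eta_0}$ yields the first bound, and the reduction to $\exp(C\sqrt\xi)$ is the elementary inequality $\max(c\eta_0,1)\sqrt{\eta_0}\lesssim1$, which holds since $\eta_0\le d^2=c^{-2}$.

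For the lower bound I would use the additional hypotheses $k_0\ge\kappa^{-1/2}$ and $10d\le\eta_0\le\tfrac1{100c^2}$. Since $\kappa k^2\ge1$ for $k\ge k_0$ and $t\le t_{k_0}$ lies before the resonance $\xi/k_0$, the diagonal current coefficient satisfies $b(k)\le0$ throughout $[0,t_{k_0}]$, so $\hat w(k)=|(w(k),j(k))|$ obeys the discrete frequency-transport inequality $\partial_t\hat w(k)\le a(k+1)\hat w(k+1)+a(k-1)\hat w(k-1)$ with $\hat w(k,0)=\delta_{k,k_0}$. I would then run an induction over the nested partition times $t_{l-1}$, with $l$ decreasing from $l_0\approx\sqrt{\pi\xi}$ down to $k_0$, propagating a three-tier ansatz
\[
\hat w(m,t_{l-1})\le6\pi c\eta_0(2c)^{|m-k_0|}\ (m>l),\qquad\hat w(l,t_{l-1})\le4(2c)^{|l-k_0|},\qquad\hat w(n,t_{l-1})\le2(2c)^{|n-k_0|}\ (n<l).
\]
The base case uses $\int_0^{t_{l_0-1}}a(k)\,dt\le c$ uniformly in $k$, so a Neumann-series bootstrap gives $\hat w(k,t_{l_0-1})\le(1-2c)^{-1}(2c)^{|k-k_0|}$; in the step from $l+1$ to $l$ one splits the remaining time into subwindows, observes that only the mode $l$ is near-resonant there (so $\int_{t_l}^{t_{l-1}}a(l)\,dt\le\pi c\eta_0$) while every other integral is geometrically small in $c$, and checks that the three tiers close under bootstrap. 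Reaching $l=k_0$ gives $\hat w(k,t_{k_0})\le8c$ for $k\ne k_0$. Finally, for $w(k_0)$ itself, Duhamel combined with $\kappa k_0^2\ge1$ bounds $j(k_0,t_{k_0})\le\tfrac1{\alpha_{k_0}\xi\eta_0}$, and a second Duhamel bounds the self-feedback $\alpha k_0\int j(k_0)\,d\tau\le\tfrac4{\beta\eta_0}$; together with the neighbour bounds this gives $|w(k_0,t_{k_0})-1|\lesssim\eta_0c^2+c^2+(\beta\eta_0)^{-1}\le\tfrac12$, using $\eta_0c^2\le\tfrac1{100}$ and $\beta\eta_0\ge10\beta/c\gg1$. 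Hence $w(k_0,t_{k_0})\ge\tfrac12\ge\max\big(\max_{k\ne k_0}\hat w(k,t_{k_0}),\,|j(k_0,t_{k_0})|\big)$, the asserted domination.

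The main obstacle is the multi-tier induction in the lower bound: infinitely many modes must be controlled simultaneously across the telescoping family $\{t_{l-1}\}$, and the tier constants have to be dimensioned so that the $\eta_0$-amplification occurring at the frontier mode $l$ does not outrun the geometric $(2c)^{|m-k_0|}$ decay of its neighbours before the chain reaches $k_0$. What makes it close is that over a window in which $k$ is genuinely near-resonant $\int a(k)\sim c\eta_0$, whereas over a window not yet resonant for $k$ the same integral gains an extra geometric factor in $c$, together with $c$ being small enough that $\sum_m(2c)^{|m-k_0|}$ converges with plenty of room. A secondary point to keep track of is that $t_{k_0}$ still lies before the resonance centre $\xi/k_0$, so $w(k_0)$ has not yet undergone its own echo growth, and its deviation from $1$ is only the small accumulated feedback bounded above.
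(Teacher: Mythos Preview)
Your proposal is correct and follows essentially the same approach as the paper's proof: the same crude energy estimate with the same coefficient bounds for part one, and the same three-tier induction over the partition times $t_{l-1}$ (with $l$ descending from $l_0\approx\sqrt{\pi\xi}$ to $k_0$) followed by a Duhamel bound on $j(k_0)$ and the self-feedback $\alpha k_0\int j(k_0)$ for part two. Your summary line ``$\hat w(k,t_{k_0})\le 8c$ for $k\ne k_0$'' is in fact consistent with the paper's tiered bound once one uses $\eta_0 c^2\le\tfrac1{100}$, so there is no gap.
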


\section{Resonances and Norm Inflation} \label{Ik}
Having discussed the evolution for small times and large times in Section \ref{sec:small} it remains to discuss the evolution on the interval
\begin{align*}
    (t_{k_0}, 2\xi)= \bigcup_{1\leq k \leq k_0} I^k
\end{align*}
with $I^k$ as in Definition \ref{defi:Ik}.

Based on the heuristics of the toy model of Section \ref{sec:toy} our aim here is to establish both upper lower and upper bounds on the norm inflation on each resonant interval $I^k$, where the resonant mode $w(k)$ can possibly lead to a large growth of its neighboring modes $w(k\pm 1)$.
In order to simplify notation we introduce the growth factor
\begin{align*}
L=L(\alpha ,\kappa, k),
\end{align*}
which estimates the maximal growth of $w(k)$ due to its interaction with the current $j(k)$, see Appendix \ref{LApp}.
In particular, we show that $L=1$ if $\beta\ge \pi$ and if $\beta <\pi $ we obtain an estimate $L=L(\alpha ,\kappa, k)\le \sqrt c $. We define $M$ and $M_n$ as
\begin{align*}
    M &= \sum_m 10^{-\vert m \vert }(w+\tfrac 1 {\alpha_{k_m}  }j)(k_m, s_0)  \\
    M_n &= \sum_{m} 10^{-\vert m-n\vert+\chi} (w+    \tfrac {1} {\alpha_{k_m}}   j)(k_m, \tilde s_0) 
\end{align*}
where $\chi= -\vert \sgn (m )-\sgn(n)\vert $. We note that 
\begin{align*}
    \sum_{l\neq 0}  (\tfrac 3\eta )^{\vert k_ l-k_0\vert } M_l &\le \tfrac 3\eta M.
\end{align*}
With these notations the main results of this section are summarized in the following theorem: 
\begin{theorem}\label{Thm_I}
Let $c\le \min ( 10^{-3} \beta^{\frac {16} 3 }, 10^{-4} ) $
, $\xi \ge 10   \kappa^{-1}(1+\beta^{-1} )  $ and $\eta=\tfrac \xi{k^2}\ge 10 d$ and $t_k = \tfrac \xi 2 (\tfrac 1 k+\tfrac 1{k+1})$, then it holds that
\begin{align*}
    \Vert w,j\Vert_X(t_{k-1} ) &\le 18\pi L \hat \lambda (c\eta)^\gamma  \Vert w,j\Vert_X(t_k).
\end{align*}
Furthermore, let $\kappa_k \min(\beta,1 )\ge \tfrac 1 c  $   and  $\beta \ge \tfrac 1 5 $ 
\begin{align}
\begin{split}
w(k,t_{k} )&\ge \tfrac 1 2 \max (w(l,t_{k}), j (l,t_{k})).  \label{Echo_cond}
\end{split}
\end{align}
Then $w(k-1,t_{k-1})$ satisfies \eqref{Echo_cond} with $k$ replaced by $k-1$ and 
\begin{align*}
    \vert w(k-1  ,  t_{k-1})\vert&\ge \min (\beta ,\pi )  (c\eta)^\gamma w(k,t_{k} ). 
\end{align*}
\end{theorem}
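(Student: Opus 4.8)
The plan is to prove the upper and lower bounds separately on $I^k$, after the change of variables $s=t-\tfrac{\xi}{k}$ which centres the resonance of the $k$-th mode at $s=0$ and turns $I^k=(t_k,t_{k-1})$ into $[s_0,s_1]$ with $|s_0|,|s_1|\approx\tfrac{\eta}{2}$. The decisive quantity is the norm transferred by the resonant mode $w(k)$ to its neighbours $w(k\pm1)$: the relevant coupling coefficient in \eqref{wpsi1} is $c\tfrac{\xi}{k^2}\tfrac{1}{1+(\frac{\xi}{k}-t)^2}=c\eta\,\tfrac{1}{1+s^2}$, whose time integral over $I^k$ is at most $\pi c\eta$, and since $\eta\ge10d=10c^{-1}$ this per-echo factor $c\eta\ge10$ is large. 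By contrast the transfer to modes $k\pm2,k\pm3,\dots$ carries an extra factor $\approx c/\eta\ll1$ per step, because for those modes the denominator $1+(\tfrac{\xi}{k\pm m}-t)^2$ has size $1+m^2\eta^2$ throughout $I^k$. Hence, up to the behaviour of $w(k)$ itself — packaged by the growth factor $L$ and exponent $\gamma$ of Appendix \ref{LApp}, with $L=1$ when $\beta\ge\pi$ — and up to the weight ratios $\hat\lambda$, the dynamics on $I^k$ reduces to the toy model of Lemma \ref{lem:toy}. The functionals $M,M_n$ defined above, which aggregate the incoming data $w(k_m,s_0)+\alpha_{k_m}^{-1}j(k_m,s_0)$ with geometric weights and satisfy $\sum_{l\ne0}(3/\eta)^{|k_l-k_0|}M_l\le\tfrac3\eta M$, are the bookkeeping device.

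\textbf{Upper bound.} Writing $\hat w(k_m):=|(w(k_m),j(k_m))|$, I would run a bootstrap on $[s_0,s^\ast]$ with $s^\ast\le s_1$. For the resonant mode, combining $\partial_s w(k)=-\alpha k j(k)+(\text{neighbour source})$ with the $j(k)$-equation and the damping--mixing structure of $b(k,\cdot)$, Appendix \ref{LApp} gives $\sup_{[s_0,s_1]}|w(k,s)|\lesssim L\,\hat w(k,s_0)+(\text{lower order})$, the neighbour source being $\lesssim(c/\eta)\sup\hat w(k\pm1)$. For a neighbour mode, integrating $\partial_s w(k\pm1)=-\alpha(k\pm1)j(k\pm1)\pm c\eta\tfrac{1}{1+s^2}w(k)+(\text{small})$ over $[s_0,s_1]$, using $\int\tfrac{ds}{1+s^2}\le\pi$ and coupling $w(k\pm1)$ to $j(k\pm1)$ as in Lemma \ref{lem:rough}, gives $\hat w(k\pm1,s_1)\lesssim\hat w(k\pm1,s_0)+L(c\eta)^\gamma\hat w(k,s_0)+(\text{small})$, where passing from $c\eta$ to $(c\eta)^\gamma$ and inserting $L$ absorbs the time profile and possible decay of $w(k,\cdot)$ coming from Appendix \ref{LApp}. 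For $|m|\ge2$ the source from $w(k\pm(m-1))$ carries the extra factor $\approx c/\eta$, so iterating yields the geometric bound $\hat w(k+m,s_1)\lesssim(c/\eta)^{|m|-1}L(c\eta)^\gamma\hat w(k,s_0)+(\text{data})$ and the bootstrap closes because $(c/\eta)\hat\lambda\ll1$. Expressing all pointwise bounds through $M,M_n$ and summing against $\lambda_{k_m}$, the dominant term is the neighbour gain $L(c\eta)^\gamma$ times one weight ratio $\hat\lambda$; tracking the numerical constants produces the stated $18\pi L\hat\lambda(c\eta)^\gamma$.

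\textbf{Lower bound.} Assume \eqref{Echo_cond} at $t_k$ and normalise $w(k,t_k)=1$ (if it vanishes there is nothing to prove), so $|w(l,t_k)|,|j(l,t_k)|\le2$ for all $l$. I would first use Appendix \ref{LApp} and $\beta\ge\tfrac15$ to show that $w(k,\cdot)$ keeps its sign and stays comparable to its data on the sub-interval $|s|\lesssim1$ carrying most of the $L^1$-mass of $\tfrac1{1+s^2}$, quantitatively $\big|c\eta\int_{s_0}^{s_1}\tfrac{w(k,s)}{1+s^2}\,ds\big|\ge2\min(\beta,\pi)(c\eta)^\gamma$. Integrating the $(k-1)$-equation of \eqref{wpsi1} over $I^k$, $w(k-1,t_{k-1})$ equals $w(k-1,t_k)$ plus this resonant transfer plus the feedback $-\alpha(k-1)\int j(k-1)$ and the non-resonant source from $w(k-2)$. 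I would bound each of the last three by a small multiple of the main term: $|w(k-1,t_k)|\le2$ is negligible against $(c\eta)^\gamma$ since $c\eta\ge10$; the $w(k-2)$-source carries the denominator $1+(2\eta)^2$ and $|w(k-2)|\le2$ on $I^k$ by the upper bound, hence is $O(c/\eta)$; and $j(k-1)$ is damped at rate $\approx\kappa(k-1)^2\eta^2\gg1$ throughout $I^k$, so with $j(k-1,t_k)\le2$, the feedback $\alpha(k-1)w(k-1)$ controlled by a bootstrap, and $\kappa_k\min(\beta,1)\ge c^{-1}$, its integral is also $\ll(c\eta)^\gamma$. This yields $|w(k-1,t_{k-1})|\ge\min(\beta,\pi)(c\eta)^\gamma$. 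Finally I would verify \eqref{Echo_cond} for $k-1$ at $t_{k-1}$ (in the form that $|w(k-1)|$ dominates every mode up to a factor $2$): the same transfer also drives $w(k+1)$ to a value of magnitude comparable to $w(k-1,t_{k-1})$, so these two are tied up to a factor much closer to $1$ than to $2$, while the upper-bound estimates show all remaining modes (including $w(k,t_{k-1})\lesssim L$) are smaller by $O((c\eta)^{-\gamma})$ or $O(c/\eta)$ — using here the coupling $c\le10^{-3}\beta^{16/3}$ and $\eta\ge10d$; hence $w(k-1)$ is the dominant mode with the required slack, which closes the induction.

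\textbf{Main obstacle.} The genuinely hard input is the control of the resonant mode $w(k)$, i.e. the estimates packaged as $L$ and $\gamma$ in Appendix \ref{LApp}: the coupled $w(k)$--$j(k)$ system behaves very differently for $\kappa k^2\gtrsim1$ versus $\kappa k^2\ll1$ (mixing-enhanced dissipation) and for $\beta\gtrsim\pi$ versus $\beta<\pi$ (where $w(k)$ may decay, or $w(k),j(k)$ may oscillate and partially cancel against $\tfrac1{1+s^2}$), and it is precisely this behaviour that pins down $\gamma$ and forces the coupling condition $c\le10^{-3}\beta^{16/3}$. A secondary nuisance is that the per-echo gain $c\eta\ge10$ is large, so the bootstrap constants must be tracked carefully so that the geometric series in $c/\eta$ and $\hat\lambda<10$ converge, and so that in the lower bound the three correction terms are genuinely dominated by the main term uniformly in $k$ and $\xi$.
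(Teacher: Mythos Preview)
Your outline captures the echo heuristics correctly, but there is a genuine gap in the upper-bound bootstrap, and a misattribution of where the exponent $\gamma$ comes from.

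\textbf{The back-reaction problem.} Your bootstrap posits $\sup_{[s_0,s_1]}|w(k,s)|\lesssim L\,\hat w(k,s_0)$ on the whole of $I^k$, with the neighbour source treated as lower order since $a(k\pm1)\lesssim c/\eta$. This fails on the post-resonance part $s\in[d,s_1]$ (with $d=c^{-1}$, $s_1\approx\eta/2$): by that stage $|w(k\pm1)|$ has already grown to size $\sim c\eta\,L\hat w(k,s_0)$, so the neighbour term in the $w(k)$-equation has size $\sim(c/\eta)\cdot c\eta L=c^2L$, and integrating over the remaining interval of length $\sim\eta/2$ produces a change in $w(k)$ of order $c^2\eta\,L\hat w(k,s_0)$. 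For $\eta\gg c^{-2}$ this is much larger than $L\hat w(k,s_0)$, so the bootstrap does not close. In other words, after the resonance the neighbour is large enough that its coupling back to the resonant mode is no longer perturbative over the long tail of $I^k$.

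\textbf{How the paper handles this, and the origin of $\gamma$.} The paper splits $I^k=I_1\cup I_2\cup I_3=[s_0,-d]\cup[-d,d]\cup[d,s_1]$ and introduces the variables $u_1=w(k)$, $u_2=w(k+1)-w(k-1)$, $u_3=w(k+1)+w(k-1)$. On the outer intervals $I_1,I_3$ the dominant interaction is the two-mode system $\partial_s u_1=-\tfrac{c}{\eta}u_2$, $\partial_s u_2=\tfrac{2c\eta}{s^2}u_1$, which is solved \emph{exactly} by power laws $(s/\eta)^{\gamma_1},(s/\eta)^{\gamma_2}$ with $\gamma_1+\gamma_2=1$, $\gamma_1\gamma_2=2c^2$, hence $\gamma=\gamma_1-\gamma_2=\sqrt{1-8c^2}$. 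This is where the exponent $\gamma$ comes from --- it encodes precisely the back-reaction you dropped --- and it is \emph{not} in Appendix \ref{LApp}, which only supplies the factor $L$ controlling the $u_1$--$j(k_0)$ interaction on the short inner interval $I_2$. All remaining terms (the $u_3$, $j(k_{\pm1})$, $w(k_{\pm2})$ contributions) are then treated as forcing on this explicit solution.

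\textbf{Lower bound.} Your plan here is closer to the paper's, but the step ``$w(k,\cdot)$ keeps its sign and stays comparable to its data on $|s|\lesssim1$'' is itself nontrivial: on $I_2$ the paper passes to the good unknown $g=(1+s^2)j(k_0)-\beta^{-1}u_1$ to show that $u_1$ tracks $\exp\bigl(-\beta^{-1}(\arctan s+\arctan d)\bigr)$, which is what produces the $\min(\beta,\pi)$ factor. And for propagating to $s_1$ one again needs the power-law solution on $I_3$, for the same back-reaction reason as above.
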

To prove the estimates of Theorem \ref{Thm_I} it is convenient to rescale $\tilde j(k) = \alpha_kj(k)$ in \eqref{wpsi1} to obtain 
\begin{align*}
    \partial_t w (k)  &=  -\tilde j(k)  \\
    & -c \tfrac \xi{(k+1)^2} \tfrac 1 {1+(\frac \xi {k+1} -t)^2 } w(k+1)\\
    & + c\tfrac \xi {(k-1)^2} \tfrac 1{1+(\frac \xi {k-1} -t)^2 } w(k-1)\\
    \partial_t \tilde j (k)&   =(2 \tfrac {t-\frac \xi k  }{1+(\frac \xi k -t)^2} -\kappa k^2(1+(t-\tfrac \xi k ^2)) \tilde j(k)  +\tfrac {\kappa k^2} \beta  w(k), 
\end{align*}
where we used that $\kappa =\beta \alpha^2 $. With respect to these unknowns the norm on our space $X$ changes slightly
\begin{align*}
    \Vert w, j \Vert_X^2&= \sum \lambda_k( w^2(k) +\tfrac \beta {\kappa k^2 }  \tilde j^2(k)) \\
    &=:\Vert w, \tilde  j \Vert_{\tilde X} ^2.
\end{align*}
In the following sections, with slight abuse of notation we omit writing the tilde symbols both for $j$ and $X$.

Given a choice of time interval $I_{k_0}$, considering $k_0$ as arbitrary but fixed (and unrelated to $k_0$ of Section \ref{sec:small}) we further introduce the relative frequencies
\begin{align*}
    k_n:=k_0+n,
\end{align*}
where $n\in \bbZ_{>-k_0}$ and also shift our time variable
\begin{align*}
    t=\tfrac \xi {k_0} +s.
\end{align*}
Introducing the coefficient functions
\begin{align}
\begin{split}
    a(k) &=c\eta  \tfrac {k_0^2}{(k)^2} \tfrac 1 {1+(\eta \tfrac  {k_0(k_0-k)}{k} -s)^2},\\
    b(k) &= 2 \tfrac {(s- \eta \tfrac  {k_0(k_0-k)}{k} ) }{1+(\eta \tfrac  {k_0(k_0-k)}{k} - s)^2} -\kappa_k(1+(\eta  \tfrac  {(k_0-k)k_0}{k} - s)^2),
    \end{split}
\end{align}
the system \eqref{wpsi1} then reads 
\begin{align}
\label{eq:ODE2} 
\begin{split}
    \partial_s w (k) &=  - j(k)  \\
    & \quad -a(k+1) w(k+1)\\
    & \quad + a(k-1) w(k-1),\\
    \partial_t j(k)  &   =\tfrac {\kappa_k} \beta  w(k) + b(k) j(k).
    \end{split}
\end{align}
For later reference, we note that the coefficient function $a$ satisfies the following estimates:
\begin{align}
\begin{split}
    a(k_0)&=c \eta \tfrac 1 {1+s^2},\\ 
    a(k_{\pm1 } ) &\le 4 \tfrac c\eta,\\
    a(k_{n } ) &\le  \tfrac c \eta, 
\end{split}
\end{align}
for all $\vert n \vert \ge 2$.

Finally, in view of cancellations of $-a(k-1)$ and $+a(k+1)$ on any given time interval $I^k$ it is convenient to work with the unknowns 
\begin{align*}
u_1=w(k_0), u_2=w(k_1)-w(k_{-1}), u_3=w(k_1)+w(k_{-1}).
\end{align*}
We then consider \eqref{eq:ODE2} as a forced system for these three modes (and a separate equation for all other modes):
\begin{align}
\begin{split}
    \partial_s \left( 
    \begin{array}{c}
         u_1  \\
         u_2 \\
         u_3\\
         j (k_0)
    \end{array}
    \right) &= \left( 
    \begin{array}{cccc}
         0&-a_1&a_2 &-1 \\
         2 c\eta\tfrac 1 {1+s^2} &0&0&0  \\
                  0&0&0&0\\
         \tfrac{\kappa_k}\beta &0&0&\tfrac {2s}{1+s^2} -\kappa_k (1+s^2)
    \end{array}
    \right)\left( 
    \begin{array}{c}
         u_1  \\
         u_2 \\
         u_3\\
         j (k_0)
    \end{array}
    \right)\\
    &+ \left( 
    \begin{array}{c}
         0  \\
         -a(k_{\pm 2})w(k_{\pm 2}) \mp j(k_{\pm 1} ) \\
         \mp a(k_{\pm2} )w(k_{\pm 2}) -j(k_{\pm 1})\\
         0
    \end{array}
    \right) \label{Echo}
\end{split}
\end{align}
where $a_1=\tfrac 1 2 ( a(k_1)+a(k_{-1})) $ and $a_2=\tfrac 1 2 ( a(k_1)-a(k_{-1} ))$.

The analysis of this system is split into multiple subsections, where we also split the time interval $I^k$ as
\begin{align*}
    I^{k_0} = [ s_0,-d] \cup  [-d,d]\cup [d,  s_1] =: I_1\cup I_2 \cup I_3,
\end{align*}
where $ s_0 = - \tfrac \eta 2 \tfrac {k_0-1}{k_0}$ and $ s_1= \tfrac \eta 2 \tfrac {k_0+1}{k_0} $. 
Similarly to the setting of the Euler equations \cite{dengZ2019} here the interaction between growth and decay of various modes interacts to determine the over all norm inflation.

\subsection{Proof of Theorem \ref{thm:main}}
Before proceeding to the proof of Theorem \ref{Thm_I}, in this subsection we discuss how it can be used to establish Theorem \ref{thm:main}.
We split the proof into two auxiliary theorems.

\begin{theorem}[technical statement]\label{TechThm}
Let $c\le \min( 10^{-3}\beta^{\frac {16} 3 }, 10^{-4} )$, $\xi \ge 10  \kappa^{-1}(1+\beta^{-1} )$ and $\tfrac \xi {k^2 } \ge 10 d $. Then there exists exists a constant $C=C(\kappa , \alpha, c) $ such that for a fixed $\xi$ we obtain 
\begin{align*}
    \Vert w,j\Vert_X(t,\xi ) &\le \exp(C\sqrt \xi )  \Vert w, j\Vert_X(0,\xi).
\end{align*}
Furthermore, let $\xi \ge  10^4\tfrac {d^2}{\beta \kappa} $, $\beta\ge \tfrac 1 5 $,  $k_0 \approx \tfrac c{10}    \sqrt \xi  $ and $k_1\approx \tfrac {4}{\sqrt{\beta \kappa}  }$, then there exists a constant $C^\ast =C^\ast (\kappa,\alpha  , c)$ such that  for initial data $w(k, 0)= \delta_{k_0,k} $ and $j(k,0)=0$ we obtain 
\begin{align*}
    w(k_1,t) &\ge \exp( \tilde C \sqrt{\xi} ).
\end{align*}
for $t\in [t_{k_1}-1,t_{k_1}+1]$.
\end{theorem}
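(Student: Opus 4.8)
\textbf{Plan of proof for Theorem \ref{TechThm}.}
The strategy is to combine the three building blocks already established: the large-time bound (Lemma \ref{lem:LT}) for $t > 2\xi$, the small-time bounds (Lemma \ref{low_time}) for $t \le t_{k_0}$, and the echo estimates on each resonant interval (Theorem \ref{Thm_I}) for $t_{k_0} < t < 2\xi$. For the upper bound, I would first observe that it suffices to work at a fixed $\xi$ since \eqref{wpsi1} decouples in $\xi$. If $\xi$ is small (below the thresholds in Theorem \ref{Thm_I}), the rough estimate of Lemma \ref{lem:rough} already gives an $\exp(\tfrac43\kappa^{-1/2})\exp((1+\hat\lambda)c\xi t)$ bound, and since we only need to reach $t\le 2\xi$ and $\xi$ is bounded, this is absorbed into $\exp(C\sqrt\xi)$ by adjusting $C$. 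For large $\xi$, I would split $[0,\infty) = [0,t_{k_0}] \cup \bigcup_{1\le k\le k_0} I^k \cup [2\xi,\infty)$, apply Lemma \ref{low_time} on the first piece to get a factor $\exp(C\sqrt\xi)$, then iterate the bound of Theorem \ref{Thm_I},
\begin{align*}
  \Vert w,j\Vert_X(t_{k-1}) \le 18\pi L\hat\lambda (c\eta_k)^\gamma \Vert w,j\Vert_X(t_k), \qquad \eta_k = \tfrac{\xi}{k^2},
\end{align*}
over $k = k_0, k_0-1, \dots, 1$, and finally apply Lemma \ref{lem:LT} on $[2\xi,\infty)$, which only costs a bounded multiplicative constant. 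The product of the echo factors is $\prod_{k=1}^{k_0} 18\pi L\hat\lambda(c\xi/k^2)^\gamma$; using $cL\ll1$ and $\gamma<1$ and Stirling's formula, $\prod_{k=1}^{k_0}(c\xi/k^2)^\gamma = \big((c\xi)^{k_0}/(k_0!)^2\big)^\gamma \lesssim \exp(\gamma\, 2\sqrt{c\xi})$, while the $\prod (18\pi L\hat\lambda)$ contributes $\exp(k_0 \log(18\pi L\hat\lambda))$; choosing $k_0\approx \sqrt{c\xi}$ (as in the theorem statement, $k_0\approx\tfrac{c}{10}\sqrt\xi$ up to the precise constant) keeps both factors of the form $\exp(C\sqrt\xi)$. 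This defines the constant $C = C(\kappa,\alpha,c)$.

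For the lower bound, I would run the same chain of intervals but track the single mode $w(k_0,\cdot)$ and its successive descendants. Starting from $w(k,0) = \delta_{k_0,k}$, $j(k,0) = 0$, Lemma \ref{low_time} (its second part, applicable since $k_0\ge\kappa^{-1/2}$ once $\xi$ is large and $10d \le \eta_0 \le \tfrac1{100c^2}$ with $\eta_0 = \xi/k_0^2 \approx 100/c$ for the stated choice $k_0\approx\tfrac{c}{10}\sqrt\xi$) gives that at time $t_{k_0}$ the mode $w(k_0)$ is at least $\tfrac12$ and dominates all other modes and $j(k_0)$, i.e.\ condition \eqref{Echo_cond} holds at $k = k_0$. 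Then I would apply the lower-bound half of Theorem \ref{Thm_I} inductively: provided the hypotheses $\kappa_k\min(\beta,1)\ge\tfrac1c$, $\beta\ge\tfrac15$ and $\eta_k = \xi/k^2\ge 10d$ hold, \eqref{Echo_cond} propagates from $k$ to $k-1$ and
\begin{align*}
  \vert w(k-1,t_{k-1})\vert \ge \min(\beta,\pi)(c\eta_k)^\gamma w(k,t_k).
\end{align*}
Iterating from $k = k_0$ down to $k = k_1 \approx \tfrac{4}{\sqrt{\beta\kappa}}$ (the point at which the dissipation hypothesis $\kappa_k\min(\beta,1)\ge\tfrac1c$ starts to fail, so the chain must stop there) yields
\begin{align*}
  \vert w(k_1,t_{k_1})\vert \ge \tfrac12\prod_{k=k_1+1}^{k_0}\min(\beta,\pi)\Big(\tfrac{c\xi}{k^2}\Big)^\gamma,
\end{align*}
and the same Stirling computation — now as a \emph{lower} bound, using that the omitted tail $k\le k_1$ of the product is a bounded correction since $k_1$ is of fixed size — gives $\exp(\tilde C\sqrt\xi)$ for a constant $\tilde C = \tilde C(\kappa,\alpha,c) > 0$, provided $c$ is small enough that $c^\gamma$ does not overwhelm the factorial gain, which is exactly where the restriction $c\le 10^{-3}\beta^{16/3}$ is used. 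Finally, to get the stated time window $t\in[t_{k_1}-1,t_{k_1}+1]$ rather than exactly $t_{k_1}$, I would note that on an $O(1)$-length time interval around $t_{k_1}$ the coefficient functions are bounded (the resonance at $k_1$ has width $\eta_{k_1}\gg1$), so Gronwall over a unit interval changes $w(k_1,\cdot)$ by at most a bounded factor, which is absorbed into $\tilde C$.

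The main obstacle is the bookkeeping in the lower-bound iteration: one must verify that \emph{all} the hypotheses of Theorem \ref{Thm_I} (in particular $\eta_k\ge 10d$, which forces $k\le\sqrt{\xi/(10d)}$, and the dissipation condition $\kappa_k\min(\beta,1)\ge\tfrac1c$, which forces $k\ge k_1\approx\tfrac4{\sqrt{\beta\kappa}}$) are simultaneously satisfied along the whole chain $k_1 < k \le k_0$, and that the chosen endpoints $k_0\approx\tfrac{c}{10}\sqrt\xi$ and $k_1\approx\tfrac4{\sqrt{\beta\kappa}}$ are compatible with $\xi \ge 10^4 \tfrac{d^2}{\beta\kappa}$ — this lower bound on $\xi$ is precisely what guarantees $k_0 > k_1$ so that the chain is nonempty and long enough. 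A secondary subtlety is ensuring the error terms and neighboring-mode back-coupling (the forcing terms in \eqref{Echo}) remain negligible throughout; this is where the smallness $cL\ll1$ enters, and it is handled uniformly by the estimate $L\le\sqrt c$ from Appendix \ref{LApp} together with $c\le10^{-4}$.
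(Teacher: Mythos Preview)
Your proposal is correct and follows essentially the same approach as the paper: decompose time into the small-time regime, the chain of resonant intervals $I^k$, and the large-time regime, then concatenate the estimates from Lemma~\ref{low_time}, Theorem~\ref{Thm_I}, and Lemma~\ref{lem:LT} via Stirling's formula for both the upper and lower bounds. Your write-up is in fact more careful than the paper's own proof in verifying the hypotheses along the chain and in handling the time window $[t_{k_1}-1,t_{k_1}+1]$.
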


\begin{proof}[Proof of Theorem \ref{TechThm}]
For fixed $\xi$, $t$ and $k_0 = : 10d \sqrt{\xi}$ we consider $w(\cdot, \xi, t)$ as an element in $X$. On $X$ we define the operator $S_{\tau_1,\tau_2 } : \ X\to X $ as the solution operator of \eqref{wpsi1} on $[\tau_1,\tau_2  ]$, i.e.
\begin{align*}
    S_{\tau_1,\tau_2 } [w(\cdot , \xi ,\tau_1)]&=w(\cdot , \xi ,\tau_2  )\\
    S_{\tau_1 ,\tau_2}\circ S_{\tau_2,\tau_3}&= S_{\tau_1 ,\tau_3}. 
\end{align*}
By Lemma \ref{low_time}, Theorem \ref{Thm_I} and Lemma \ref{lem:LT} this $S$ then satisfies the following norm estimates: 
\begin{align*}
    \Vert S_{0,t_{k_0+1}}\Vert_{X\to X} &= \exp(C_1 \sqrt \xi),\\
    \Vert S_{t_k ,t_{k-1}}\Vert_{X\to X}&= 3\pi c (\tfrac \xi {k^2})^\gamma, \\
    \Vert S_{t_1,t}\Vert_{X\to X}&=2 \tfrac 1 {1-c\sqrt {\hat \lambda }}.
\end{align*}
Combining these estimates with Stirling's approximation formula we thus obtain the desired upper bound:
\begin{align*}
    \Vert S_{0,t} \Vert_{X\to X}&\le 2 \exp(C_1 \sqrt \xi ) \prod_{k=1}^{k_0}  3\pi c (\tfrac \xi {k^2})^\gamma\\
    &\le \exp(C\sqrt \xi ) . 
\end{align*}

Concerning the lower bound, we use first use Lemma \ref{low_time} and then Theorem \ref{Thm_I} to deduce that 
\begin{align*}
    w(k_0,t_{k_0}) &\ge\tfrac 12\\
    w(k-1,t_{k-1} )&\ge (c\tfrac \xi{k^2} )^{\gamma } \min(\beta,  \pi)  w(k,t_k)
\end{align*}
for $\sqrt{\frac c{10}\xi} \approx k_0\ge k \ge k_1\approx  \tfrac {4}{\sqrt{\beta \kappa}}$.
Thus, by again using Stirling's approximation, we conclude that 
\begin{align*}
    w(k_1,t_{k_1})&\ge \tfrac 1 2 \prod_{k=k_1}^{k_2}   (c\tfrac \xi{k^2} )^{\gamma } \min(\beta,  \pi) \\
    &\approx \exp(\tilde C \sqrt \xi ) .
\end{align*}

\end{proof}

\begin{theorem}[Stability and blow-up]
\label{theorem:final}
Let $c\le \min( 10^{-3}\beta^{\frac {16} 3 }, 10^{-4} )$ and $w , j $ be a solution to \eqref{wpsi1} , then there exists a constant $C=C(\kappa,\alpha ,c )$ such that for all $C_1>C$ and  initial data which satisfy
\begin{align*}
    \int \exp (C_1\sqrt \xi )  \Vert w_0, j_0\Vert_X^2(\xi) \ d\xi<\infty,
\end{align*}
the solution remains Gevrey $2$ regular in the sense that 
\begin{align*}
    \sup_t \int \exp (C_2\sqrt \xi )  \Vert w, j\Vert_X^2(\xi,t) \ d\xi \leq \tilde{C} \int \exp (C_1\sqrt \xi )  \Vert w_0, j_0\Vert_X^2(\xi) \ d\xi,
\end{align*}
where $C_2 =C_1-C$ and $\tilde{C}>0$ is a universal constant.

Furthermore, additionally suppose that $\beta \ge \tfrac 1 5 $, then  there exist a constant $0<C^\ast <C$ and initial data $w_0,j_0$ which satisfy 
\begin{align*}
    \int \exp (C^\ast \sqrt \xi )  \Vert w_0, j_0\Vert_X^2(\xi) \ d\xi<\infty,
\end{align*}
such that for a subsequence $k_{n,1 } $ the solution diverges in $L^2$: 
\begin{align*}
    \Vert w(\cdot ,t_{k_{n, 1}})\Vert_{L^2 \ell^2  }\to \infty .
\end{align*}
\end{theorem}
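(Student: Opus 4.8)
The plan is to obtain Theorem \ref{theorem:final} as a consequence of the frequency-by-frequency estimates of Theorem \ref{TechThm} (hence of Theorem \ref{Thm_I}), using that \eqref{wpsi1} decouples with respect to $\xi$, so that the relevant Gevrey norm is $\int \exp(C_2\sqrt\xi)\,\|w,j\|_X^2(\xi,t)\,d\xi$ and each fiber may be treated separately.

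\textbf{Upper bound.} For $\xi$ above the threshold $\xi_\ast:=10\kappa^{-1}(1+\beta^{-1})$ the first part of Theorem \ref{TechThm} gives $\|w,j\|_X(t,\xi)\le \exp(C\sqrt\xi)\|w_0,j_0\|_X(\xi)$ uniformly in $t$, with $C=C(\kappa,\alpha,c)$. For the bounded remaining range $0<\xi\le\xi_\ast$ I would patch this by hand: on $[0,2\xi]$ Lemma \ref{lem:rough} bounds the norm by $\exp(\tfrac43\kappa^{-1/2})\exp((1+\hat\lambda)c\xi t)\le\exp(\tfrac43\kappa^{-1/2})\exp(2(1+\hat\lambda)c\xi_\ast^2)$, a constant, and on $(2\xi,\infty)$ Lemma \ref{lem:LT} bounds the norm by a constant times its value at $2\xi$ (its hypotheses $4c<1$, $2c\hat\lambda<1$ hold since $c\le10^{-4}$, $\hat\lambda<10$). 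Absorbing these bounded prefactors into the constant, at the cost of enlarging $C$, we get for all $\xi>0$ and $t>0$
\begin{align*}
  \|w,j\|_X^2(t,\xi)\lesssim\exp(C\sqrt\xi)\,\|w_0,j_0\|_X^2(\xi).
\end{align*}
Multiplying by $\exp(C_2\sqrt\xi)$ with $C_2:=C_1-C$ and integrating over $\xi$ yields the persistence of Gevrey-$2$ regularity with the claimed loss.

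\textbf{Lower bound.} Here I would build the initial data as a superposition of narrow wave packets concentrated at a lacunary sequence $\xi_n\to\infty$, each above the threshold $10^4 d^2/(\beta\kappa)$ of the second part of Theorem \ref{TechThm}. Fix disjoint intervals $J_n=[\xi_n,\xi_n+\rho]$ of a small fixed width $\rho$, a fixed bump profile $\psi$ with $\psi_n$ a translate supported in $J_n$, and set $w_0(k,\xi)=\sum_n a_n\psi_n(\xi)\delta_{k_0(n),k}$ and $j_0\equiv0$, with $k_0(n)=\lfloor\tfrac{c}{10}\sqrt{\xi_n}\rfloor$; a single integer $k_0(n)$ is admissible on all of $J_n$ since $\tfrac{c}{10}\sqrt\xi$ varies by $O(c/\sqrt{\xi_n})\ll1$ there. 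By $\xi$-decoupling and linearity of each fiber, for $\xi\in J_n$ the solution is $a_n\psi_n(\xi)$ times the unit-data solution, so Theorem \ref{TechThm} gives $w(k_1,t,\xi)\ge a_n\psi_n(\xi)\exp(\tilde C\sqrt\xi)$ on the window $[t_{k_1}(\xi)-1,t_{k_1}(\xi)+1]$, where $k_1\approx4/\sqrt{\beta\kappa}$ is independent of $n$. Since $\tfrac{d}{d\xi}t_{k_1}(\xi)\approx1/k_1$ and $k_1\ge4$, taking $\rho$ a small fixed constant makes all these windows over $J_n$ contain a common time $T_n=:t_{k_{n,1}}$, and $T_n\to\infty$. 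Evaluating at $T_n$ and retaining only $\xi\in J_n$,
\begin{align*}
  \|w(\cdot,T_n)\|_{L^2\ell^2}^2\ge\int_{J_n}|w(k_1,T_n,\xi)|^2\,d\xi\gtrsim a_n^2\,\exp(2\tilde C\sqrt{\xi_n}).
\end{align*}
Finally set $C^\ast:=\tfrac12\min(C,2\tilde C)$ (so $C^\ast<C$ and $C^\ast<2\tilde C$), pick $\xi_n$ increasing with $\sqrt{\xi_n}\gtrsim\log n$, and choose $a_n^2:=\exp(-\theta\sqrt{\xi_n})$ with $C^\ast<\theta<2\tilde C$; then $\sum_n a_n^2\exp(C^\ast\sqrt{\xi_n})<\infty$, so $w_0$ lies in the $C^\ast$-weighted space (the weight $\lambda_{k_0(n)}$ contributing only a harmless factor, $c$ being tiny), while $\|w(\cdot,T_n)\|_{L^2\ell^2}^2\gtrsim\exp((2\tilde C-\theta)\sqrt{\xi_n})\to\infty$.

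\textbf{Where the difficulty lies.} All the analytic substance is in Theorem \ref{TechThm} (hence Theorem \ref{Thm_I}); the present argument is organizational. The one genuinely delicate point is that the lower bound of Theorem \ref{TechThm} is pointwise in $\xi$ and holds only on a short, $\xi$-dependent time window, whereas the conclusion demands divergence of a single spatial norm along one common sequence of times. This is what forces the packets to be narrow and requires checking that every $\xi\in J_n$ still satisfies the threshold hypotheses of Theorem \ref{TechThm} and shares one integer $k_0(n)$. The remaining care is constant bookkeeping: the admissible loss $C$ of the upper bound and the growth rate $\tilde C$ of the lower bound must leave room for a positive but strictly smaller exponent $C^\ast$, and positivity of $\tilde C$ — so that such a $C^\ast$ exists — is exactly where the hypothesis $c\le10^{-3}\beta^{16/3}$ enters, through Theorem \ref{TechThm}.
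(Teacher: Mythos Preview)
Your proof is correct and follows essentially the same route as the paper: the upper bound is obtained by integrating the fiberwise estimate of Theorem \ref{TechThm} against the Gevrey weight, and the lower bound by a superposition of narrow frequency packets $\psi_n$ supported near $\xi_n\to\infty$ with initial mode $\delta_{k_0(n),k}$, so that the norm-inflation estimate applied at the common time $T_n$ forces divergence. Your treatment is in fact slightly more careful than the paper's in two places---you explicitly handle the low-frequency range $\xi\le\xi_\ast$ via Lemmas \ref{lem:rough} and \ref{lem:LT}, and you spell out why the $\xi$-dependent time windows over $J_n$ share a common time---but the underlying argument is the same.
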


\begin{proof}[Proof of Theorem \ref{theorem:final}]

The first part follows directly from Theorem \ref{TechThm}. For the second part we fix $\xi_1 =10^4\tfrac {d^2}{\beta \kappa} $ and define the sequence $\xi_n = n\xi_1$ with the associated $k_{0}^{\xi_n}\approx \tfrac c{10}    \sqrt {\xi_n} $ and $k_{1}\approx \tfrac {4}{\sqrt{\beta \kappa}  }$. Note that the starting mode $k_{0}^{\xi_n}$ is $\xi_n$-dependent, but the final mode $k_1$ is independent of $\xi_n$.  Furthermore, let $z_n(\xi )$ be a function in $C^\infty \cap L^2 $, such that 
\begin{align*}
    \supp z_n(\cdot  ) &\subset [\xi_n-1 ,\xi_n+1]\\
    \int  z_n(\xi )^2 \ d\xi &=1. 
\end{align*}
We then define the initial data 
\begin{align*}
    w(k,\xi ,0)&= \sum_{n=1}^\infty \tfrac 1 {n} z_n(\xi) \exp( -\tfrac 1 2  C^\ast \sqrt{\xi} )\delta_{k_{\xi_n  ,0},k }.
\end{align*}
We observe that it satisfies the estimates 
\begin{align*}
    \Vert w(\cdot,\xi ,0)\Vert_{l^2 }^2&=\sum_{n=1}^\infty \tfrac 1 {n^2} z_n(\xi )^2 \exp( - C^\ast \sqrt\xi ),\\
    \int \exp(  C^\ast \sqrt\xi ) \Vert w(\cdot,\xi ,0)\Vert_{l^2 }^2 \ d \xi&= 2 \sum \tfrac 1{n^2}=\tfrac{\pi^2}3.
\end{align*}
Furthermore, by the norm inflation results for each  mode at each time $t_{k_{n,1}}$ we obtain that 
\begin{align*}
    \Vert w(k_{n, 1},\xi ,t_{k_{n, 1}})\Vert_{l^2  } &\ge \tfrac 9 {10}   \tfrac 1 {n^2 } z(\xi,n)  \exp( (\tilde C -C^\ast)  \sqrt\xi ),
\end{align*}
and integrating in $\xi$ we conclude that
\begin{align*}
    \Vert w(\cdot ,t_{k_{n, 1}})\Vert_{L^2l^2  } &\ge \tfrac 9 {10}   \tfrac 1 {n^2 }   \exp( (\tilde C-C^\ast) \sqrt{\xi_n })\to \infty.
\end{align*}
\end{proof}

\subsection{Asymptotic Behavior on the Intervals $I_1$ and $I_3$}
\label{sect:I13}
In this section we consider the equation \eqref{Echo} on the outer intervals $I_1 =[ s_0,-d]$ and $I_3 =[d,  s_1]$. Since a lot of calculations are similar on both intervals we in general write the interval as $[\tilde s_0,\tilde s_1]$, where we only need to distinguish the two cases on a few occasions and in the statement of the conclusions. For the interval $I_1$ we prove the following proposition:
\begin{pro}[Interval $I_1$]\label{I1pro}
Let  $c < \max( 10^{-4}, 10^{-1} \beta)$ and  $\xi \ge 10 \max (   \kappa^{-1}(1+\beta^{-1} ), k_0^2  d ) $. Then for a solution of \eqref{Echo} on the interval $I_1 $ the following estimates hold at the time $d$:
\begin{align*}
|u_1(d)| & \le 2 M (c \eta)^{-\gamma_2 },\\
|u_2(d)| & \le 2 M (c \eta)^{\gamma_1},\\
    \vert u_3 \vert(-d) &\le 2 M_1, \\
    \vert w(k_n , -d)\vert &\le 2 M_n, \\
    \vert j\vert(k_0,-d) &\le  \tfrac c\beta  (c\eta) ^{-\gamma_2 } M\inf(c, \kappa_{k_0}c^{-2}), \\
    \vert j\vert(k_{\pm 1},-d)&\le \tfrac 4{\beta \eta^2 }  M,\\
    \vert j\vert(k_n,-d)&\le   \tfrac 4{\beta \eta^2 }  M_n.
\end{align*}
If we additionally assume that 
\begin{align}
\begin{split}
    w(k_0,t_{k_0} )&\ge \tfrac 1 2 \sup_l  (w(l,t_{l}), j (l,t_{l})) , \label{addI1} 
\end{split}
\end{align}
we obtain that 
\begin{align}
\begin{split}
     \vert u_1 (-d)-(c\eta)^{-\gamma_2}u_1(  s_0)\vert      &=50c   u_1( \tilde s_0)(c\eta)^{-\gamma_2}  \\
     \vert u_2 (-d)\vert &\le 50c u_1( \tilde s_0)(c\eta)^{\gamma_1},\\
    \vert u_3\vert (-d), \vert w\vert (k_m,  -d)&\le 2 \vert  u  \vert (s_0)\qquad \text{ for } \vert m\vert \ge 2, \\
     \vert j\vert (k_m,  -d )&\le \tfrac 4 \eta \vert u\vert ( s_0)\qquad \text{ for } \vert m\vert \ge 1,\\
     \vert j\vert(k_0,-d) &\le  \tfrac {2 c^2}\beta \vert u\vert ( s_0)  (c\eta) ^{-\gamma_2 }.    \label{echo_I1}
\end{split}
\end{align}
\end{pro}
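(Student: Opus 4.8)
The plan is to recast the system \eqref{Echo} in Duhamel form on $I_1=[\tilde s_0,\tilde s_1]=[s_0,-d]$ and to close a bootstrap argument, using crucially that on this interval the time variable $s$ stays away from $0$ (the $k_0$-resonance) and from every neighbouring resonance $s\approx\eta k_0 n/k_n$, $n\neq 0$. First I would record the sign of the damping coefficients: since $s<-d<0$ on $I_1$ we have $2\tfrac{s}{1+s^2}<0$, hence $b(k_0)<0$ and in fact $|b(k_0)|\ge\kappa_{k_0}(1+s^2)$; and for $n\neq 0$ the resonance time of $k_n$ sits outside $I_1$ at distance $\gtrsim\eta$, so the oscillatory part of $b(k_n)$ is $\le 1$ while $\kappa_{k_n}(\eta\tfrac{k_0(k_0-k_n)}{k_n}-s)^2\gtrsim\kappa_{k_n}\eta^2\gg 1$ under $\xi\ge 10\kappa^{-1}(1+\beta^{-1})$, so $b(k_n)<0$ as well. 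Consequently each current satisfies $j(k)(s)=e^{\int_{\tilde s_0}^s b(k)}\,j(k)(\tilde s_0)+\int_{\tilde s_0}^s e^{\int_\tau^s b(k)}\tfrac{\kappa_k}\beta w(k)(\tau)\,d\tau$ with nonpositive exponent, and a Laplace-type estimate of the last integral against $|b(k)|^{-1}$ controls $|j(k_0)|$ by $\tfrac1\beta\min(c,\kappa_{k_0}c^{-2})\sup_{I_1}|u_1|$ (splitting according to whether $\kappa_{k_0}(1+s^2)$ is large or small on $I_1$, where $1+s^2\in[1+d^2,\sim\eta^2/4]$) and $|j(k_{\pm1})|,|j(k_n)|$ by $\tfrac4{\beta\eta^2}\sup_{I_1}|w(k_{\pm1})|$, $\tfrac4{\beta\eta^2}\sup_{I_1}|w(k_n)|$, the extra $\eta^{-2}$ coming from $|b|^{-1}\lesssim(\kappa_k\eta^2)^{-1}$ together with the factor $\kappa_k/\beta$. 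These are exactly the stated current bounds once the $w$-bounds below are in hand.

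\textbf{The tail modes.} For $|m|\ge 2$ only the small coefficient $a(k_m)\le c/\eta$ enters the equation for $w(k_m)$, and only $a_2=\tfrac12(a(k_1)-a(k_{-1}))$ with $|a_2|\le 4c/\eta$ enters the $u_3$-equation; since $|I_1|\sim\eta/2$ we get $\int_{I_1}a(k_m)\,ds\lesssim c\ll 1$. Integrating $\partial_s w(k_m)=-j(k_m)-a(k_{m+1})w(k_{m+1})+a(k_{m-1})w(k_{m-1})$ and the $u_3$-equation, using the current bounds just established and the geometric decay identity $\sum_{l\neq 0}(3/\eta)^{|k_l-k_0|}M_l\le\tfrac3\eta M$ to absorb cross-terms, a short bootstrap keeps $|w(k_m,-d)|\le 2M_m$ and $|u_3(-d)|\le 2M_1$.

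\textbf{The core pair $(u_1,u_2)$.} This is the heart of the matter. Here $u_1=w(k_0)$ is driven into $u_2=w(k_1)-w(k_{-1})$ through the strong coefficient $2a(k_0)=\tfrac{2c\eta}{1+s^2}$, while $u_1$ is damped through $-j(k_0)$. I would substitute the sharp form $|j(k_0)|\lesssim\tfrac1{\beta(1+s^2)}|u_1|$ of the Laplace estimate into $\partial_s u_1=-a_1u_2+a_2u_3-j(k_0)$, turning the $u_1$-equation into $\partial_s u_1=-\tfrac{c_\ast}{\beta(1+s^2)}u_1-a_1u_2+(\text{small forcing})$ for some $c_\ast\in(0,1]$, so Grönwall with the integrating factor $\exp\!\big(\tfrac{c_\ast}\beta(\arctan s-\arctan\tilde s_0)\big)$ gives the decay of $u_1$; on $I_1$ one estimates $\int_{s_0}^{-d}\tfrac{ds}{1+s^2}=\arctan(|s_0|)-\arctan(d)$ via the $\arctan$ asymptotics together with $d=c^{-1}$ and $\eta\ge 10d$, and converts the outcome into the stated bound $|u_1(-d)|\le 2M(c\eta)^{-\gamma_2}$ using the hypotheses on $c$ and $\eta$ (this is where the precise value of $\gamma_2$ relative to $\beta$ is pinned down). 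Simultaneously $u_2(s)=u_2(\tilde s_0)+\int_{\tilde s_0}^s 2a(k_0)u_1\,d\tau-j(k_1)+j(k_{-1})-a(k_2)w(k_2)-a(k_{-2})w(k_{-2})$, so $|u_2(-d)|\lesssim M_1+\sup_{I_1}|u_1|\cdot\int_{I_1}\tfrac{2c\eta}{1+s^2}ds$, and the same integral estimate, now multiplied by $c\eta$, yields the stated $|u_2(-d)|\le 2M(c\eta)^{\gamma_1}$. Feeding these back closes the bootstrap for $(u_1,u_2)$, and the refined $j(k_0,-d)$-bound follows by inserting $\sup_{I_1}|u_1|\le M(c\eta)^{-\gamma_2}$ into the Laplace estimate from the first paragraph.

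\textbf{The refined estimates under \eqref{addI1}.} Assumption \eqref{addI1}, combined with the earlier interval estimates, forces $w(k_m,s_0)+\tfrac1{\alpha_{k_m}}j(k_m,s_0)\lesssim u_1(s_0)$ for every $m$, hence $M\lesssim|u|(s_0)=u_1(\tilde s_0)$. Re-running the argument above but now tracking the leading term of $u_1$ — namely $(c\eta)^{-\gamma_2}u_1(\tilde s_0)$, with the corrections from $a_1u_2$ and $a_2u_3$ of relative size $O(c)$ because $\int_{I_1}a_1\,ds\lesssim c$ and $|u_2|,|u_3|\lesssim u_1(s_0)(c\eta)^{\gamma_1},\,u_1(s_0)$ — produces the $50c$-type error in the first two lines of \eqref{echo_I1}, while the last three lines are the previous bounds with $M$ replaced by $u_1(s_0)$ and the additional $c$-smallness inserted as above.

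\textbf{Main obstacle.} I expect the genuinely delicate point to be the simultaneous control of $u_1$ and $u_2$ in the core pair: one must show $u_1$ actually decays — which requires the dissipation routed through $j(k_0)$ to dominate the back-reaction $-a_1u_2$ of the growing mode $u_2$ — while $u_2$ grows only up to the allowed $(c\eta)^{\gamma_1}$. Because the coefficient integrals $\int_{I_1}a(k_0)$ are not uniformly small, this is a genuine two-way coupling rather than a perturbation, so the bootstrap has to be set up with the correct integrating factor and with the split $\gamma_1+\gamma_2=\gamma$ encoding the balance between the transfer $u_1\to u_2$ and the dissipative loss of $u_1$ (as in the Euler echo analysis of \cite{dengZ2019}); getting the constants to actually close, which is the role of the hypothesis $c\le 10^{-3}\beta^{16/3}$, is the technical crux.
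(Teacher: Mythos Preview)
Your handling of the currents $j(k_n)$ for $n\neq 0$ and of the tail modes $w(k_m)$, $u_3$ is essentially the paper's argument and is fine. The genuine gap is in your treatment of the core pair $(u_1,u_2)$: you misidentify the origin of the exponents $\gamma_1,\gamma_2$.

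You propose to absorb $j(k_0)$ into the $u_1$--equation via $|j(k_0)|\lesssim\tfrac{1}{\beta(1+s^2)}|u_1|$ and then run Gr\"onwall with the integrating factor $\exp\bigl(\tfrac{c_\ast}{\beta}(\arctan s-\arctan\tilde s_0)\bigr)$, claiming that ``this is where the precise value of $\gamma_2$ relative to $\beta$ is pinned down''. This cannot work: on $I_1=[s_0,-d]$ one has $|s|\ge d=c^{-1}$, hence $\int_{I_1}\tfrac{ds}{1+s^2}=\arctan|s_0|-\arctan d\approx c-2/\eta=O(c)$, so your integrating factor is $1+O(c/\beta)$ and produces no decay whatsoever. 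Moreover $\gamma_2=\tfrac12(1-\sqrt{1-8c^2})\approx 2c^2$ depends only on $c$ and has nothing to do with $\beta$. The current $j(k_0)$ is \emph{not} the leading mechanism on $I_1$; in the paper it is treated as a perturbative forcing $F_j$ (Lemma~\ref{Rpsi}).

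What actually generates the powers $(c\eta)^{\pm\gamma_i}$ is the two--way coupling of $u_1$ and $u_2$ itself. The paper freezes $a_1\approx c/\eta$ and $2c\eta/(1+s^2)\approx 2c\eta/s^2$ on $I_1$ and writes the homogeneous system \eqref{I1_hom}, equivalently the Euler equation $s^2 u_1''+2c^2 u_1=0$, whose indicial roots $\lambda(\lambda-1)+2c^2=0$ are precisely $\gamma_1,\gamma_2$. The explicit fundamental matrix $S(s)$ with columns $|s/\eta|^{\gamma_i}$ is then used as the backbone; all other terms (the $3$--mode correction, $F_j$, $F_{u_3}$, $F_{\tilde w}$, $F_{j(k_{\pm1})}$) are estimated as forcings via $R[F]=S^{-1}\!\int F$, and a bootstrap on $r(s)=S^{-1}(s)u(s)$ closes with the constants $c_1,c_2,\tilde c_1$ chosen in the proof. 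Evaluating $S(-d)$ at $|s/\eta|=1/(c\eta)$ is what produces the factors $(c\eta)^{-\gamma_2}$ and $(c\eta)^{\gamma_1}$ in the statement. In particular the refined bound $|u_1(-d)-(c\eta)^{-\gamma_2}u_1(s_0)|\le 50c\,u_1(s_0)(c\eta)^{-\gamma_2}$ requires knowing that the leading behaviour is exactly the second column of $S$; a Gr\"onwall inequality cannot deliver this. Your ``main obstacle'' paragraph correctly senses that the $u_1\leftrightarrow u_2$ coupling is not perturbative, but the resolution is an explicit solution operator for the Euler ODE, not an integrating factor coming from dissipation.
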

The proof of this proposition is split into several lemmas and concludes at the end of this subsection.
For the interval $I_3$ in a first step we only establish asymptotic estimates. The final conclusion for interval $I_3 $ will be postponed to the proof of Theorem \ref{Thm_I}. On both intervals $I_1$ and $I_2$ the interaction of $u_1$ and $u_2$ is the main effect to be analyzed. Therefore, we consider the equations for $u_1$ and $u_2$ as an inhomogeneous linear system 
\begin{align}
\begin{split}
    \partial_s \left( 
    \begin{array}{c}
         u_1  \\
         u_2 
    \end{array}
    \right) &= \left( 
    \begin{array}{cccc}
         0&-\tfrac c\eta    \\
         2 c\eta\tfrac 1 {s^2} &0 \\
    \end{array}
    \right)\left( 
    \begin{array}{c}
         u_1  \\
         u_2
    \end{array}
    \right) + F\label{I1_hom},
\end{split}
\end{align}
where $F$ is a force term.  Equation \eqref{I1_hom} with $F=0$ has a explicit homogeneous solution and we aim to show that \eqref{Echo} can be treated as a perturbation. In the following we denote $\tilde u$ as the homogeneous solution of \eqref{I1_hom}. Furthermore, we split the forcing as
\begin{align*}
   F=: F_{all} &= F_{3mode} + F_j + F_{u_3 }+ F_{j(k_0\pm 1 )} +F_{\tilde w }
\end{align*}
where we define 
\begin{align*}
    F_{3mode} &=(\tfrac c \eta -a_1) e_1 u_2- 2c \eta \tfrac 1{s^2(s^2+1)}e_2 u_1,
\end{align*}
as the 3 mode forcing
\begin{align*}
    F_j &=-e_1 j(k_0) 
\end{align*}
as the $k_0$-th current forcing and
\begin{align*}
    F_{u_3 }+ F_{j(k_0\pm 1 )} +F_{\tilde w }
    &=e_1 a_2 u_3 \mp e_2 j(k_{\pm 1})-e_2 a(k_{\pm 2 }) w(k_{\pm 2 }) 
\end{align*}
as the forcings due to $u_3$, $j(k_0\pm 1 )$ and $\tilde w $, respectively.
The corresponding $R[F_\ast]$ are the called $r$ changes. We also define $\gamma=\sqrt{1-8c^2}$ and $\gamma_1 =\tfrac 12(1+\gamma)$ and $\gamma_2 = \tfrac 1 2 (1-\gamma) $ and note the following equalities:
\begin{align*}
    \gamma_1\gamma_2 &= 2c^2,\\
    \gamma_1+\gamma_2 &=1,\\
    \gamma&= 1+\mathcal{O}(c^2),\\
    \gamma_1&= 1+\mathcal{O}(c^2),\\
    \gamma_2&= \tfrac 1 {\gamma_1 }2c^2= 2c^2+\mathcal{O}(c^4).
\end{align*}
\begin{lemma}
Consider \eqref{I1_hom} with $F=0$, then the solution is given by 
\begin{align*}
    \tilde u(s) &= S(s) r 
\end{align*}
with 
\begin{align*}
    S(s)&= \left( 
    \begin{array}{cc}
         \vert \tfrac s \eta\vert^{\gamma_1 }& \vert\tfrac s\eta \vert^{\gamma_2 }\\
         -\tfrac {\gamma_1 }{c } \tfrac s \eta  \vert\tfrac s \eta\vert^{\gamma_1-2 }&-\tfrac {\gamma_2 }{c }\tfrac s \eta\vert\tfrac s \eta\vert ^{\gamma_2-2 }
    \end{array}
    \right), 
\end{align*}
and $r=S^{-1} (\tilde s_0) \tilde u(s_0) $. 
\end{lemma}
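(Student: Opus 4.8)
The plan is to reduce the first-order homogeneous system \eqref{I1_hom} (with $F=0$) to a scalar second-order Euler equation, read off the indicial exponents, recover $u_2$ from $u_1$, and then verify directly that the claimed matrix $S(s)$ solves the matrix ODE and has nonvanishing determinant, so that it is a fundamental matrix.

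First I would eliminate $u_2$. From the first equation, $u_2=-\tfrac{\eta}{c}\,\partial_s u_1$, and substituting into the second equation $\partial_s u_2=\tfrac{2c\eta}{s^2}u_1$ yields the equidimensional (Euler) equation
\[
  \partial_s^2 u_1 + \frac{2c^2}{s^2}\,u_1 = 0 .
\]
On each of the intervals $I_1=[s_0,-d]$ and $I_3=[d,s_1]$ the variable $s$ has a fixed sign and never vanishes (recall $d=c^{-1}>0$), and $\eta=\xi/k^2>0$, so the ansatz $u_1=|s/\eta|^{\rho}$ is legitimate and leads to the indicial equation $\rho^2-\rho+2c^2=0$. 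Its roots are precisely $\gamma_1=\tfrac12(1+\gamma)$ and $\gamma_2=\tfrac12(1-\gamma)$ with $\gamma=\sqrt{1-8c^2}$; here one uses the recorded identities $\gamma_1+\gamma_2=1$ and $\gamma_1\gamma_2=2c^2$, and for $c$ small $\gamma\in(0,1)$ is real, so the two exponents are real and distinct. Using $\partial_s|s/\eta|^{\gamma_i}=\tfrac{\gamma_i}{\eta}\,\tfrac{s}{\eta}|s/\eta|^{\gamma_i-2}$ (valid on both signs of $s$) and $u_2=-\tfrac{\eta}{c}\partial_s u_1$ one obtains exactly the two columns of $S(s)$.

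It then remains to check two points. First, a direct differentiation shows $\partial_s S(s)=A(s)S(s)$ with $A(s)=\begin{pmatrix}0 & -c/\eta\\ 2c\eta/s^2 & 0\end{pmatrix}$; the only algebraic input is again $\gamma_i(1-\gamma_i)=\gamma_1\gamma_2=2c^2$, which makes the second row of $\partial_s S$ agree with the second row of $A S$. Second, a short computation gives
\[
  \det S(s)=\frac1c\,\frac{s}{\eta}\,|s/\eta|^{\gamma_1+\gamma_2-2}(\gamma_1-\gamma_2)=\frac{\gamma}{c}\,\sgn(s),
\]
which is a nonzero constant on each of $I_1$ and $I_3$ (as it must be by Abel's identity, since $\operatorname{tr}A=0$). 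Hence the columns of $S$ are linearly independent solutions, so they span the two-dimensional solution space of the linear homogeneous system; every solution $\tilde u$ is therefore of the form $\tilde u(s)=S(s)r$ with $r\in\R^2$ constant, and evaluating at $s=\tilde s_0$ and inverting gives $r=S^{-1}(\tilde s_0)\tilde u(\tilde s_0)$, as claimed.

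There is no genuine obstacle here: the statement is a direct ODE verification. The only points demanding care are the sign bookkeeping when differentiating $|s/\eta|^{\gamma_i}$, so that a single formula is valid on $I_1$ (where $s<0$) and on $I_3$ (where $s>0$), and confirming $\gamma\in(0,1)$ so the indicial roots are real and distinct — this is where the smallness assumption on $c$ enters, and it will be used repeatedly in the perturbative treatment of the forcing $F$ in the subsequent lemmas.
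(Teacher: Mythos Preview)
Your proof is correct. The paper actually states this lemma without proof, treating the formula for $S(s)$ as a direct verification; your derivation via reduction to the Euler equation $\partial_s^2 u_1 + \tfrac{2c^2}{s^2}u_1=0$ with indicial roots $\gamma_1,\gamma_2$ is exactly the natural way to arrive at it, and your check of the determinant $\det S(s)=\tfrac{\gamma}{c}\sgn(s)$ confirms that $S$ is a genuine fundamental matrix on each of $I_1$ and $I_3$.
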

Furthermore, we define the operator $S^\ast$ as 
\begin{align*}
    S^\ast (s)&= \left( 
    \begin{array}{cc}
         \vert \tfrac s \eta\vert^{\gamma_1 }& \vert\tfrac s\eta \vert^{\gamma_2 }\\
         \tfrac {\gamma_1 }{c }   \vert\tfrac s \eta\vert^{\gamma_1-1 }&\tfrac {\gamma_2 }{c }\vert\tfrac s \eta\vert ^{\gamma_2-1 }
    \end{array}
    \right),
\end{align*}
which gives the estimate
\begin{align*}
    \vert S(s) r\vert &\le S^\ast r \qquad \forall r \in ( \R_+)^2
\end{align*}
The inverse of $S$ can be computed as 
\begin{align*}
    S^{-1}(s)&= \sgn(s) c \gamma^{-1} \left( 
    \begin{array}{cc}
         -\tfrac {\gamma_2 }{c }\tfrac s \eta\vert\tfrac s \eta\vert ^{\gamma_2-2 }& -\vert\tfrac s\eta \vert^{\gamma_2 }\\
         \tfrac {\gamma_1 }{c } \tfrac s \eta  \vert\tfrac s \eta\vert^{\gamma_1-2 }&\vert \tfrac s \eta\vert^{\gamma_1 }
    \end{array}
    \right)\\
    &=\left( 
    \begin{array}{cc}
         -\tfrac {\gamma_2 }{\gamma }\vert\tfrac s \eta\vert ^{\gamma_2-1}& -\tfrac c\gamma \tfrac s \eta\vert\tfrac s\eta \vert^{\gamma_2-1 }\\
         \tfrac {\gamma_1 }{\gamma }   \vert\tfrac s \eta\vert^{\gamma_1-1 }&\tfrac c\gamma  \tfrac s \eta \vert \tfrac s \eta\vert^{\gamma_1 -1}
    \end{array}
    \right).
\end{align*}

\begin{lemma}
Let $u_1, u_2$ be a solution to \eqref{I1_hom}  with given $F=(F_1,F_2 )$, then for 
\begin{align*}
    R_1[F]&=(1+10c^2)2c^2 \eta^{1-\gamma_2 } \int_{\tilde s_0}^s \tau^{\gamma_2-1 } F_1(\tau )  \  d\tau+ c \eta^{-\gamma_2 } \int_{\tilde s_0}^s \tau^{\gamma_2 }F_2(\tau)  \ d\tau,\\
    R_2[F]&=(1+10c^2)\eta^{1-\gamma_1} \int_{\tilde s_0}^s \tau^{\gamma_1 -1 }F_1(\tau) \ d\tau+ c\eta^{-\gamma_1 } \int_{\tilde s_0}^s \tau^{\gamma_1 } F_2(\tau ) \ d\tau,
\end{align*}
we estimate 
\begin{align*}
    \vert u-\tilde u\vert &\le S^\ast(s)    R[F].
\end{align*}
\begin{proof}
Since $S$ has an inverse, we write
\begin{align*}
    u = S(s) r(s)
\end{align*}
and our aim is to control the evolution of $r(s)$. Therefore, we calculate 
\begin{align*}
    \vert \partial_s r  \vert &= S^{-1} F \\
    \vert \partial_s r_1\vert &\le 2 c^2\vert\tfrac s \eta\vert^{\gamma_2-1} F_1  + c \vert\tfrac s \eta\vert^{\gamma_2}F_2 \\
    \vert \partial_s r_2\vert &\le \vert\tfrac s \eta\vert^{\gamma_1-1} F_1  +  c \vert\tfrac s \eta\vert^{\gamma_1}F_2
\end{align*}
and so 
\begin{align*}
    \vert r_1(s) -r_1(d)\vert &\le 2 c^2 (1+10c^2)\eta^{1-\gamma_2 } \int \tau^{\gamma_2-1 } F_1(\tau )+ c \eta^{-\gamma_2 } \int \tau^{\gamma_2 }F_2(\tau) \\
    \vert r_2(s) -r_2(d)\vert &\le (1+10c^2)\eta^{1-\gamma_1} \int \tau^{\gamma_1 -1 }F_1(\tau)+ c \eta^{-\gamma_1 } \int \tau^{\gamma_1 } F_2(\tau )
\end{align*}
\end{proof}
\end{lemma}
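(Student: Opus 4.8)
The plan is to treat \eqref{I1_hom} by variation of constants against the explicit fundamental matrix $S(s)$ of the homogeneous system and then to propagate the resulting error entrywise, using the companion operators $S^{-1}$ and $S^\ast$ already recorded. First I would reduce to an ordinary differential equation for the profile $r$: since $S(s)$ is invertible and its columns solve the homogeneous version of \eqref{I1_hom}, i.e.\ $\partial_s S=A\,S$ with $A(s)=\left(\begin{smallmatrix}0 & -c/\eta\\ 2c\eta/s^2 & 0\end{smallmatrix}\right)$, every solution $u$ of \eqref{I1_hom} can be written $u(s)=S(s)\,r(s)$; differentiating, the contributions $(\partial_s S)\,r$ and $A\,S\,r$ cancel, leaving $S(s)\,\partial_s r(s)=F(s)$, that is
\begin{align*}
  \partial_s r(s)=S^{-1}(s)\,F(s).
\end{align*}
Taking $\tilde u$ to be the homogeneous solution that agrees with $u$ at $\tilde s_0$, so that $\tilde u(s)=S(s)\,r$ with $r=S^{-1}(\tilde s_0)\,u(\tilde s_0)=r(\tilde s_0)$, one gets $u(s)-\tilde u(s)=S(s)\bigl(r(s)-r(\tilde s_0)\bigr)$, and the claim reduces to estimating $r(s)-r(\tilde s_0)=\int_{\tilde s_0}^{s}S^{-1}(\tau)F(\tau)\,d\tau$ componentwise by $R[F]$.

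To that end I would insert the explicit form of $S^{-1}(s)$ and pass to entrywise absolute values. On $I_1$ (and likewise on $I_3$) the variable $s$ keeps a fixed sign and $\eta>0$, so $\bigl|\tfrac s\eta\bigr|^{\gamma_j-1}=\eta^{1-\gamma_j}|s|^{\gamma_j-1}$ (and similarly for the other $\gamma_j$-powers), while the entries of $|S^{-1}(s)|$ read $\tfrac{\gamma_2}{\gamma}\bigl|\tfrac s\eta\bigr|^{\gamma_2-1}$, $\tfrac c\gamma\bigl|\tfrac s\eta\bigr|^{\gamma_2}$, $\tfrac{\gamma_1}{\gamma}\bigl|\tfrac s\eta\bigr|^{\gamma_1-1}$, $\tfrac c\gamma\bigl|\tfrac s\eta\bigr|^{\gamma_1}$. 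Using $\gamma_1+\gamma_2=1$, $\gamma_1\gamma_2=2c^2$ and $\gamma=\sqrt{1-8c^2}=1+\mathcal{O}(c^2)$, all the $\gamma$-dependent prefactors are $1+\mathcal{O}(c^2)$, so pointwise
\begin{align*}
  |\partial_s r_1(s)| &\le 2c^2\,\bigl|\tfrac s\eta\bigr|^{\gamma_2-1}|F_1(s)| + c\,\bigl|\tfrac s\eta\bigr|^{\gamma_2}|F_2(s)|,\\
  |\partial_s r_2(s)| &\le \bigl|\tfrac s\eta\bigr|^{\gamma_1-1}|F_1(s)| + c\,\bigl|\tfrac s\eta\bigr|^{\gamma_1}|F_2(s)|,
\end{align*}
up to these $1+\mathcal{O}(c^2)$ corrections. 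Integrating from $\tilde s_0$ to $s$ and pulling the constant powers of $\eta$ in front reproduces precisely the quantities $R_1[F]$, $R_2[F]$ of the statement, the $(1+10c^2)$ prefactors in $R$ providing the slack, hence $|r(s)-r(\tilde s_0)|\le R[F]$ componentwise. Finally, from $u(s)-\tilde u(s)=S(s)\bigl(r(s)-r(\tilde s_0)\bigr)$ and the entrywise domination $|S(s)v|\le S^\ast(s)\,|v|$, valid for every $v$ since $|S(s)|=S^\ast(s)$ entrywise, I conclude $|u(s)-\tilde u(s)|\le S^\ast(s)\,|r(s)-r(\tilde s_0)|\le S^\ast(s)\,R[F]$.

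The argument is essentially bookkeeping rather than genuine analysis: the fundamental matrix, its inverse and its entrywise modulus are all explicit, and no singularity is encountered because $|s|\ge d>0$ throughout $I_1\cup I_3$, so the negative exponents $\gamma_j-1$ are harmless. The one point that needs care — and the only step I would call the main obstacle — is matching the $\gamma_j$-dependent prefactors coming out of $S^{-1}$ against the $c$-expansions hard-wired into the definition of $R[F]$ (this is where $\gamma=\sqrt{1-8c^2}$ and the identity $\gamma_1\gamma_2=2c^2$ get used), together with keeping the sign of $s$ fixed on the subinterval so that $\bigl|\tfrac s\eta\bigr|^{\gamma_j-1}$ is well defined. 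When the estimate is later applied with the actual forcing $F=F_{all}$ of \eqref{Echo}, one additionally has to remember that $F_1,F_2$ collect the contributions of $j(k_0)$, $u_3$, the neighbouring currents $j(k_{\pm 1})$ and the tail modes $w(k_{\pm 2})$, each bounded separately before being fed into $R[F]$ term by term.
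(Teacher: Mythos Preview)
Your proof is correct and follows essentially the same approach as the paper: variation of constants $u=S(s)r(s)$, the identity $\partial_s r=S^{-1}F$, entrywise bounds on $S^{-1}$ using $\gamma_1\gamma_2=2c^2$ and $\gamma^{-1}=1+\mathcal{O}(c^2)$, integration from $\tilde s_0$ to $s$, and the final domination $|S(s)v|\le S^\ast(s)|v|$. If anything, you are more explicit than the paper about where the $(1+10c^2)$ slack comes from and why the negative powers of $|s|$ cause no trouble on $I_1\cup I_3$.
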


In the following we always assume that there exists $c_1, c_2 , \tilde c_1 ,\tilde c_2 \ge 0 $ such that 
\begin{align}
\begin{split}
    \vert u\vert  &\le S^\ast(s)C(s)\\
    C_1(s)&= c_1 + \tilde c_1 (\tfrac s \eta )^{-\gamma }\\
    C_2(s)&= c_2 + \tilde c_2 (\tfrac s \eta )^{\gamma }\label{Cest}
\end{split}
\end{align}
on a maximal interval $[ \tilde s_0,s^\ast]$. We will establish some estimates on the $R_i$ depending on $c_i$ and $\tilde c_i$ and then we will determine specific $c_i$ and $\tilde c_i$ such that we prove that the maximal $s^\ast$ will be greater than $\tilde s_1 $.   Later it will be sufficient to choose $\tilde c_1=0$ on $I_1$ and $\tilde c_2 =0$ on $I_3 $. We thus deduce 
\begin{align*}
    \vert u_1(s)\vert &\le (c_1+\tilde c_2)\vert \tfrac s\eta\vert^{\gamma_1}+ (\tilde c_1+c_2)\vert \tfrac s\eta\vert^{\gamma_2}\\
    \vert u_2(s)\vert&\le (\tfrac {\gamma_1 }{c} c_1 +\tfrac {\gamma_2 }{c} \tilde c_2 ) \vert \tfrac s\eta\vert^{\gamma_1-1}+(\tfrac {\gamma_1 }{c} \tilde c_1 +\tfrac {\gamma_2 }{c} c_2 ) \vert \tfrac s\eta\vert^{\gamma_2-1}\\
    &\le  c_1^\ast  \vert \tfrac s\eta\vert^{\gamma_1-1}+c_2^\ast  \vert \tfrac s\eta\vert^{\gamma_2-1}. 
\end{align*}
where $ c_1^\ast =\tfrac {\gamma_1 }{c} c_1 +\tfrac {\gamma_2 }{c} \tilde c_2 $ and $c_2^\ast=\tfrac {\gamma_1 }{c} \tilde c_1 +\tfrac {\gamma_2 }{c} c_2  $. For sake of simplicity we will often omit absolute values for the estimates.

\begin{lemma}[$3$ mode forcing estimate ]\label{R3mode}
Let $u(s)=S(s) r(s)$ be a solution of \eqref{Echo} on $[\tilde s _0 , s^\ast] $, such that $\vert u(s)\vert \le S^\ast (s) C(s) $, then we estimate 
\begin{align*}
    R_1[F_{3mode}]&\le 20 c^2c_1+(20+c^4(\tfrac {s\wedge \tilde s_0}\eta)^{-\gamma} )\tilde c_1  +(20c^2+c^4(\tfrac {s\wedge \tilde s_0}\eta)^{-\gamma} )c_2+20c^4\tilde c_2 \\
    R_2[F_{3mode}]&\le 20( \tfrac {s\vee  \tilde s_0} \eta)^\gamma ( c_1 + 2c^2 \tilde c_2 ) + 20(\tilde c_1 +  c^2 c_2) .
\end{align*}

\begin{proof}
We have the forcing term
\begin{align*}
    F_{3mode}&= (\tfrac c \eta -a_1) e_1 u_2- 2c \eta \tfrac 1{s^2(s^2+1)}e_2 u_1.
\end{align*}
Therefore, we estimate 
\begin{align*}
    R_1[e_2 2c \eta \tfrac 1 {s^2(1+s^2)}u_1 ]&\le 2c^2 \eta^{\gamma_1 } \int_{ \tilde s_0}^s \tau^{\gamma_2-4 }( (c_1+\tilde c_2) (\tfrac \tau  \eta)^{\gamma_1 } + (\tilde c_1+c_2) (\tfrac \tau \eta)^{\gamma_2 })\\
    &\le c^4  (c_1+\tilde c_2)+ c^4 (\tfrac {s\wedge \tilde s_0 } \eta)^{-\gamma} (\tilde c_1+c_2) \\
     R_2[e_2 2c \eta \tfrac 1 {s^2(1+s^2)}u_1 ]&= 2c^2 \eta^{\gamma_2 } \int_{ \tilde s_0}^s \tau^{\gamma_1-4 }( (c_1+\tilde c_2) (\tfrac \tau  \eta)^{\gamma_1 } + (\tilde c_1+c_2) (\tfrac \tau \eta)^{\gamma_2 })\\
     &\le 2c^3 \eta^{-\gamma}   (c_1+\tilde c_2) + c^4 (\tilde c_1+c_2).
\end{align*}
By Taylor formula we obtain $\vert c\tfrac 1 \eta -a_1 \vert \le 18 c \tfrac {\vert s\vert } {\eta^2} $ and so 
\begin{align*}
    R_1[(c\tfrac 1 \eta -a_1) u_2 e_1  ]&\le  (1+10c^2)c^2 \eta^{1-\gamma_2 } \int \tau^{\gamma_2-1 } 18 c \tfrac \tau  {\eta^2}((\tfrac \tau \eta)^{\gamma_1-1}c_1^\ast + (\tfrac \tau \eta)^{\gamma_2-1}c_2^\ast )\\
    &\le 20 c^3 c_1^\ast +20c  c_2^\ast  \\
    &\le 20c^2c_1+20 \tilde c_1  + 20c^2 c_2+20 c^4\tilde c_2, \\
    R_2[(c\tfrac 1 \eta -a_1) u_2 e_1 ]&=(1+10c^2)\eta^{1-\gamma_1} \int \tau^{\gamma_1 -1 }18 c \tfrac \tau {\eta^2}((\tfrac \tau \eta)^{\gamma_1-1}c_1^\ast+c (\tfrac \tau \eta)^{\gamma_2-1}c_2^\ast  ))\\
    &\le 20c  \vert \tfrac {s\vee  \tilde s_0} \eta\vert ^\gamma c_1^\ast + 20c c_2^\ast  \\
    &\le   20\vert \tfrac {s\vee  \tilde s_0} \eta\vert^\gamma (c_1+2c^2 \tilde c_2) +20 \tilde c_1 + 20 c^2 c_2.
\end{align*}
\end{proof}
\end{lemma}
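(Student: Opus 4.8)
The plan is to use that $F\mapsto R_1[F],R_2[F]$ are linear in the forcing, and to split
\[
F_{3mode}=\Bigl[\bigl(\tfrac c\eta-a_1\bigr)e_1u_2\Bigr]+\Bigl[-2c\eta\tfrac{1}{s^2(s^2+1)}e_2u_1\Bigr]=:F^{(1)}+F^{(2)},
\]
so that $R_i[F_{3mode}]=R_i[F^{(1)}]+R_i[F^{(2)}]$. For each piece I would insert the a priori bounds that follow from the standing hypothesis $|u(s)|\le S^\ast(s)C(s)$, namely $|u_1(\tau)|\le(c_1+\tilde c_2)|\tfrac\tau\eta|^{\gamma_1}+(\tilde c_1+c_2)|\tfrac\tau\eta|^{\gamma_2}$ and $|u_2(\tau)|\le c_1^\ast|\tfrac\tau\eta|^{\gamma_1-1}+c_2^\ast|\tfrac\tau\eta|^{\gamma_2-1}$, which were derived immediately above. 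After this substitution every integral appearing in $R_1,R_2$ is an elementary power-law integral $\int_{\tilde s_0}^{s}|\tau|^{a}\,d\tau$, and the identity $\gamma_1+\gamma_2=1$ collapses all exponents to $0$, $2\gamma_1-1$, $2\gamma_2-1$ (for the $F^{(1)}$ part) or to $-3$, $2\gamma_1-4$, $2\gamma_2-4$ (for the $F^{(2)}$ part). The remaining work is then to evaluate these by the fundamental theorem of calculus and to bookkeep the powers of $c$ and $\eta$.

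For the piece $F^{(1)}$ the key input is the pointwise bound $|\tfrac c\eta-a_1(s)|\le 18\,c\,\tfrac{|s|}{\eta^2}$ on $I^{k_0}$. I would obtain this by Taylor expanding the explicit rational function $a_1=\tfrac12(a(k_1)+a(k_{-1}))$ about $s=0$: the value at $s=0$ is $\tfrac c\eta$ up to an $O(c/(k_0\eta))$ error, the first derivative is $O(c/(k_0\eta^2))$ because the two resonant bumps are centred at $\mp\eta$ up to an $O(\eta/k_0)$ shift, and the second derivative is $O(c/\eta^3)$; since $|s|\lesssim\eta$ on $I^{k_0}$ and $k_0\ge 1$, all errors are dominated by $c|s|/\eta^2$. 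Feeding this together with the bound on $|u_2|$ into the $F_1$-part of $R_1,R_2$, the integrands become $|\tau|^0$, $|\tau|^{2\gamma_1-1}$ and $|\tau|^{2\gamma_2-1}$ up to fixed powers of $\eta$; the first integrates to $\le\eta$, the second to $\le\tfrac1{2\gamma_1}|s\vee\tilde s_0|^{2\gamma_1}$, and the third — a negative power — to $\le\tfrac1{2\gamma_2}|s\vee\tilde s_0|^{2\gamma_2}$, and here the factor $\tfrac1{2\gamma_2}\sim\tfrac1{4c^2}$ is exactly what turns the nominal $c^2$ sitting in $c_2^\ast=\tfrac{\gamma_1}{c}\tilde c_1+\tfrac{\gamma_2}{c}c_2$ into the order-one coefficient $20$ in front of $\tilde c_1$. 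Collecting, using $|\tfrac{s\vee\tilde s_0}\eta|\le 1$ to replace $2\gamma_1$ by $\gamma$ in exponents, $c\le10^{-4}$, $\gamma_1,\gamma\approx 1$ and $\gamma_2\approx 2c^2$, produces the $20c^2c_1,\,20\tilde c_1,\,20c^2c_2,\,20c^4\tilde c_2$ contributions to $R_1$ and the $20(\tfrac{s\vee\tilde s_0}\eta)^\gamma(c_1+2c^2\tilde c_2)$ and $20(\tilde c_1+c^2c_2)$ contributions to $R_2$.

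For the piece $F^{(2)}$, only the $F_2$-terms of $R_1,R_2$ contribute; I would first bound $\tfrac1{s^2(s^2+1)}\le\tfrac1{s^4}$, which is favourable since $|s|\ge d=c^{-1}$ on $I_1\cup I_3$. Inserting the bound on $|u_1|$ and using $\gamma_1+\gamma_2=1$, the integrands become $|\tau|^{-3}$, $|\tau|^{2\gamma_1-4}$, $|\tau|^{2\gamma_2-4}$; being negative powers, each is dominated at the endpoint closest to the resonance, where $|s\wedge\tilde s_0|\ge d=c^{-1}$, giving values $\lesssim c^{2}$ (respectively $\lesssim c^{3-2\gamma_1}$, $\lesssim c^{3-2\gamma_2}$). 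Carrying the $\eta$-powers from the prefactors of $R_1,R_2$ through these endpoint values yields precisely the $c^4(\tfrac{s\wedge\tilde s_0}\eta)^{-\gamma}$-type terms multiplying $\tilde c_1$ and $c_2$, together with strictly smaller $c^4c_1$, $c^4\tilde c_2$ contributions that are absorbed into the $c^2$-terms already present. Summing the estimates for $F^{(1)}$ and $F^{(2)}$ gives the two displayed inequalities.

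The step I expect to be the main obstacle is the explicit Taylor bound $|\tfrac c\eta-a_1|\le 18\,c\,|s|/\eta^2$ with its precise constant: one must keep track of the corrections $\tfrac{k_0^2}{(k_0\pm1)^2}$ and of the exact bump centres $\mp\tfrac{\eta k_0}{k_0\pm1}$, and verify the bound uniformly on all of $I^{k_0}$, including near its endpoints where $|s|$ is of order $\eta$ and the naive Taylor remainder is no longer small. A secondary but pervasive nuisance is the endpoint bookkeeping — deciding for each power-law integral whether it is controlled at $s\vee\tilde s_0$ or at $s\wedge\tilde s_0$, and on which of $I_1,I_3$ — together with checking that the mildly growing $\eta$-powers (such as $\eta^{1-2\gamma_2}$) are always neutralised, either by the accompanying powers of $c$ through $d=c^{-1}\le\eta/10$, or because they occur only in the scale-invariant combination $(c\eta)^{\pm\gamma_2}$.
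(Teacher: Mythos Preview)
Your proposal is correct and follows essentially the same route as the paper: the same two-piece splitting of $F_{3mode}$, the same Taylor bound $|\tfrac c\eta-a_1|\le 18c|s|/\eta^2$, insertion of the $|u_1|,|u_2|$ bounds from the hypothesis $|u|\le S^\ast C$, and evaluation of the resulting power-law integrals using $\gamma_1+\gamma_2=1$ and $\gamma_2\approx 2c^2$. Your identification of the mechanism whereby $\tfrac{1}{2\gamma_2}\sim\tfrac{1}{4c^2}$ promotes the $c_2^\ast$-contribution to an order-one coefficient in front of $\tilde c_1$ is exactly what the paper uses (implicitly) when it passes from $20c\,c_2^\ast$ to $20\tilde c_1+20c^2c_2$.
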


\begin{lemma}[$k_0$-th current estimate ]\label{Rpsi}
Let $u(s)=S(s) r(s)$ be a solution of \eqref{Echo} on $[\tilde s _0 , s^\ast]$  such that $\vert u(s)\vert \le S^\ast(s) C(s)$, then we estimate
\begin{align*}
    R_1[F_j]&\le\tfrac {c^3}\beta (c_1+\tilde c_2 )+  \tfrac {c^3}\beta  (\tfrac {s\wedge s_0} \eta)^{-\gamma}  (\tilde c_1+c_2 )+
    \left\{\begin{array}{cc}
         \tfrac {4 c^{2+\gamma_1 }}{\kappa_{k_0} \eta^{1+\gamma_2 }}j( k_0,\tilde s_0) & on \ I_1\\ 
         \eta^{\gamma_1 } \tfrac { c^{4+\gamma_1 }}{\kappa_{k_0} }j(k_0,\tilde s_0)& on \ I_3
    \end{array}\right. \\
    R_2[F_j]&\le \min  (\tfrac 1 \beta \tfrac 1 {2c^{1+\gamma }} \eta^{-\gamma }, \tfrac c \beta (\tfrac {s\vee\tilde s_0 }  \eta )^\gamma )  (c_1+\tilde c_2 ) +\tfrac 1 \beta c (c_2+\tilde c_1 ) \\
    &+\tfrac 1 \beta c  (c_2+\tilde c_1 ) +\left\{\begin{array}{cc}4 \tfrac {c^{\gamma_2 }}{\kappa_{k_0} \eta^{1+\gamma_1}}j(k_0,\tilde s_0)& on \ I_1\\ 
          \eta^{\gamma_2}\tfrac {c^{2+ \gamma_2 }}{\kappa_{k_0} }j(k_0,\tilde s_0)& on \ I_3
    \end{array}\right.  .
\end{align*}
Furthermore,  on $I_1$ we estimate
\begin{align*}
    \vert j(k_0, \tilde s_1 )\vert &\le \tfrac {2d^2}{\eta^2 } \exp(-\tfrac \kappa{2^5}\xi \eta^2) j(k_0,\tilde s_0)\\
   &+ \tfrac { c^2} \beta  \textbf{(}(c_1+\tilde c_2 ) (\tfrac {d } \eta )^{\gamma_1 } + ( c_2 + \tilde c_1 ) ( \tfrac {d } \eta )^ {\gamma_2 }\textbf{)}
\end{align*}
and on $I_3$ 
\begin{align*}
    \vert j(k_0,\tilde s_1 )\vert &\le c^2  \eta^2 \exp(-\kappa_{k_0}\eta^3) j (k_0 ,\tilde s_0)\\
    &+ 2 \tfrac {16^2}\beta\tfrac 1 {  \eta^2 }(c_1 +c_2 + \tilde c_1+  \tilde c_2 )
\end{align*}
\end{lemma}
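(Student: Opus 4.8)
The plan is to solve the scalar $j(k_0)$-equation of \eqref{Echo} explicitly by Duhamel's formula and feed the result into the defining integrals of $R_1$ and $R_2$. Since $F_j=-e_1j(k_0)$ has only a first component, only the $F_1$-pieces of $R_1[F_j],R_2[F_j]$ survive, and the last two assertions are pointwise statements about $j(k_0,\tilde s_1)$; so everything reduces to controlling $\int_{\tilde s_0}^{s}\tau^{\gamma_2-1}j(k_0,\tau)\,d\tau$, $\int_{\tilde s_0}^{s}\tau^{\gamma_1-1}j(k_0,\tau)\,d\tau$ and $j(k_0,\tilde s_1)$ itself. With the integrating factor $\exp\!\big(\int_\sigma^\tau b(k_0,\cdot)\big)=\tfrac{1+\tau^2}{1+\sigma^2}\exp\!\big(-\kappa_{k_0}(\tau-\sigma+\tfrac13(\tau^3-\sigma^3))\big)$, Duhamel splits $j(k_0,\tau)$ into a homogeneous part proportional to $j(k_0,\tilde s_0)$ and a source part $\tfrac{\kappa_{k_0}}{\beta}\int_{\tilde s_0}^{\tau}(\cdots)\,w(k_0,\sigma)\,d\sigma$, into which I would substitute the a priori bound $|w(k_0,\sigma)|\le(c_1+\tilde c_2)|\sigma/\eta|^{\gamma_1}+(\tilde c_1+c_2)|\sigma/\eta|^{\gamma_2}$.

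For the source contribution the key device is the identity already used in the proof of Lemma \ref{lem:toy}: $\kappa_{k_0}(1+\tau^2)$ is $-\partial_\tau$ of the exponent, so $\kappa_{k_0}(1+\tau^2)e^{-\kappa_{k_0}(\tau-\sigma+\frac13(\tau^3-\sigma^3))}=-\partial_\tau e^{-\kappa_{k_0}(\tau-\sigma+\frac13(\tau^3-\sigma^3))}$. Interchanging the order of integration in $\int_{\tilde s_0}^{s}\tau^{\gamma_i-1}j_{\mathrm{src}}(k_0,\tau)\,d\tau$ (Fubini on $\tilde s_0\le\sigma\le\tau\le s$) and integrating the inner $\tau$-integral by parts produces a boundary term $\sigma^{\gamma_i-1}$ together with lower-order remainders, and the prefactor $\kappa_{k_0}/\beta$ collapses to $1/\beta$; one is left with $\tfrac1\beta\int_{\tilde s_0}^{s}\tfrac{|w(k_0,\sigma)|}{1+\sigma^2}\sigma^{\gamma_i-1}\,d\sigma$. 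Using $\gamma_1+\gamma_2=1$ this integral splits into a piece with integrand $\propto(1+\sigma^2)^{-1}$ (whose integral over the range $|\sigma|\ge d=c^{-1}$ is an arccotangent, $\lesssim c$) and a piece with integrand $\propto|\sigma|^{\pm\gamma}(1+\sigma^2)^{-1}$, producing the powers $(\tfrac{s\wedge\tilde s_0}{\eta})^{-\gamma}$ in $R_1$ and $(\tfrac{s\vee\tilde s_0}{\eta})^{\gamma}$ in $R_2$; the $\min$ in the $R_2[F_j]$ bound records that the latter piece can alternatively be estimated by Cauchy--Schwarz, giving $\tfrac1{2\beta c^{1+\gamma}}\eta^{-\gamma}$. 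The same identity, now applied with $\tau$ frozen at $\tilde s_1$, gives the pointwise source part of $j(k_0,\tilde s_1)$: substituting the monomial bound for $|w(k_0,\cdot)|$ first and then integrating by parts in $\sigma$ localizes the integral near $\sigma=\tilde s_1$ with effective width $(\kappa_{k_0}(1+\tilde s_1^2))^{-1}$, so $\kappa_{k_0}/\beta$ collapses to $\tfrac1{\beta(1+\tilde s_1^2)}$; since $1+\tilde s_1^2\sim d^2$ on $I_1$ (where $\tilde s_1=-d$) and $\sim\eta^2$ on $I_3$ (where $\tilde s_1=s_1$), one gets the $\tfrac{c^2}\beta\big((c_1+\tilde c_2)(d/\eta)^{\gamma_1}+(c_2+\tilde c_1)(d/\eta)^{\gamma_2}\big)$ term on $I_1$ and the $\tfrac{1}{\beta\eta^2}(c_1+c_2+\tilde c_1+\tilde c_2)$ term on $I_3$.

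For the homogeneous contribution I would keep the full kernel. In the $R_i$-integrals, using $|\tau|\ge|s|\ge d$ to bound $|\tau|^{\gamma_i-1}\le d^{\gamma_i-1}$ together with $\int\kappa_{k_0}(1+\tau^2)e^{-\kappa_{k_0}(\cdots)}\,d\tau\le1$ (the integrand being a total derivative of $-e^{-\kappa_{k_0}(\cdots)}$), the homogeneous contribution is $\lesssim d^{\gamma_i-1}\big(\kappa_{k_0}(1+\tilde s_0^2)\big)^{-1}|j(k_0,\tilde s_0)|$ times the $R_i$-prefactor; inserting $d=c^{-1}$ and $1+\tilde s_0^2\sim\eta^2$ on $I_1$ resp.\ $\sim d^2$ on $I_3$ yields exactly the stated coefficients of $j(k_0,\tilde s_0)$. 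The pointwise homogeneous part of $j(k_0,\tilde s_1)$ must however \emph{not} be integrated away: evaluating the kernel at $\tau=\tilde s_1$ gives $\tfrac{1+\tilde s_1^2}{1+\tilde s_0^2}\exp\!\big(-\kappa_{k_0}(\tilde s_1-\tilde s_0+\tfrac13(\tilde s_1^3-\tilde s_0^3))\big)$, and since $\tilde s_1-\tilde s_0+\tfrac13(\tilde s_1^3-\tilde s_0^3)\gtrsim\eta^3$ on both intervals, using $\kappa_{k_0}\eta^3=\kappa\xi\eta^2$ this is $\lesssim\tfrac{d^2}{\eta^2}\exp(-\tfrac\kappa{32}\xi\eta^2)$ on $I_1$ and $\lesssim c^2\eta^2\exp(-\kappa_{k_0}\eta^3)$ on $I_3$.

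The steps above are conceptually routine once the Duhamel representation and the derivative identity $\kappa_{k_0}(1+\tau^2)e^{-\kappa_{k_0}(\cdots)}=-\partial_\tau e^{-\kappa_{k_0}(\cdots)}$ are in place; I expect the main obstacle to be purely the bookkeeping. Near the endpoint of $I^{k_0}$ where $|s|$ is smallest the weight $\tau^{\gamma_i-1}$ is as large as $d^{\gamma_i-1}$, so the decaying exponentials and the lower bound $|\sigma|\ge d$ must be matched carefully to avoid spurious logarithms and to keep every coefficient of the form (power of $c$) $\times$ (power of $\eta$) as claimed; in the integrations by parts one has to insert the monomial bound on $|w(k_0,\cdot)|$ \emph{before} differentiating so as never to generate an uncontrolled $\partial_s w(k_0)$; and the entire argument must be run separately on $I_1$ and $I_3$, since the sign of $\tau-\tilde s_0$, the position of the origin relative to the interval, and whether $1+\tilde s_0^2\sim\eta^2$ or $1+\tilde s_0^2\sim d^2$ each change which term dominates.
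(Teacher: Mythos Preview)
Your proposal is correct and follows essentially the same route as the paper: Duhamel for $j(k_0)$, split into homogeneous and source parts, Fubini on the source followed by the total-derivative identity $\kappa_{k_0}(1+\tau^2)e^{-\kappa_{k_0}(\cdots)}=-\partial_\tau e^{-\kappa_{k_0}(\cdots)}$ to collapse $\kappa_{k_0}/\beta$ to $1/\beta$, and then elementary integration of the remaining single integrals. The only cosmetic difference is that where you describe an integration by parts on the inner $\tau$-integral, the paper instead pulls $\tau^{\gamma_i-1}$ out by the monotonicity bound $|\tau_1|^{\gamma_i-1}\le|\tau_2|^{\gamma_i-1}$ before applying the total-derivative identity; both lead to the same reduced integral $\tfrac1\beta\int\tfrac{|u_1(\sigma)|}{1+\sigma^2}\sigma^{\gamma_i-1}\,d\sigma$, and the paper obtains the $\min$ in $R_2$ not via Cauchy--Schwarz but by the alternative of either integrating $\tau^{\gamma-2}$ directly or first extracting $\sup|\tau|^\gamma$.
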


\begin{proof}
The equation
\begin{align*}
     \partial_s j (k_0)  &=(\tfrac{2s}{1+s^2}-\kappa_{k_0}(1+s^2))j (k_0)+ u_1 
\end{align*}
leads to 
\begin{align*}
   j(k_0) &=\tfrac {1+s^2 }{1+s_0^2}\exp(-\kappa_{k_0} (s-s_0 +\tfrac 13 (s^3 -s_0^3)))j(k_0,\tilde s_0) \\
   &+\tfrac {\kappa_k }\beta  \int_{s_0}^s \ d\tau_2  \  \tfrac {1+s^2}{1+\tau_2^2 } \exp(-\kappa_{k_0} (s-\tau_2 +\tfrac 13 (s^3 -\tau_2 ^3))) u_1(\tau_2 )\\
   &= j_1 +j_2.
\end{align*}
Therefore, we estimate 
\begin{align*}
    R_1[F_{j_2} ] &= \tfrac {\kappa_{k_0} }\beta c^2 \eta^{1-\gamma_2}  \int_{s_0}^s \ d\tau_1   \int_{s_0}^{\tau_1 }\ d\tau_2 \ \tau_1^{\gamma_2 -1 } \tfrac {1+\tau_1 ^2}{1+\tau_2^2 } \exp(-\kappa_{k_0} (\tau_1-\tau_2 +\tfrac 13 (\tau_1^3 -\tau_2 ^3))) u_1(\tau_2 )\\
    &= \tfrac {\kappa_{k_0} }\beta c^2 \eta^{1-\gamma_2} \int_{s_0}^s \ d\tau_1   \int_{s_0}^{\tau_1 }\ d\tau_2 \ \tau_1^{\gamma_2 -1 } \tfrac {1+\tau_1 ^2}{1+\tau_2^2 } \\
    &\qquad \qquad \cdot \exp(-\kappa_{k_0} (\tau_1-\tau_2 +\tfrac 13 (\tau_1^3 -\tau_2 ^3))) ((c_1+\tilde c_2) (\tfrac {\tau_2 }\eta)^{\gamma_1 } + (c_2+\tilde c_1) (\tfrac {\tau_2 }\eta)^{\gamma_2 } )\\
    &\le\tfrac {1}\beta c^{2 }\eta^{1-\gamma_2 } \int^s_{s_0}  \ d\tau_2 \ \tfrac {((c_1+\tilde c_2) (\tfrac {\tau_2 }\eta)^{\gamma_1 } + (c_2+\tilde c_1) (\tfrac {\tau_2 }\eta)^{\gamma_2 } )\tau_2^{-\gamma_1 }}{1+\tau_2^2} \ \\
    &\qquad \qquad \cdot \int^s_{\tau_2 } d \tau_1 \kappa_{k_0} (1+\tau_1^2) \exp(-\kappa_{k_0} (\tau_1-\tau_2 +\tfrac 1 3 (\tau_1^3-\tau_2^3)))\\
    &= \tfrac {1}\beta c^{2 }\eta^{1-\gamma_2 } \int^s_{s_0}  \ d\tau_2 \ ((c_1+\tilde c_2) (\tfrac {\tau_2 }\eta)^{\gamma_1 } + (c_2+\tilde c_1) (\tfrac {\tau_2 }\eta)^{\gamma_2 } ) \tfrac {\tau_2^{-\gamma_1 }}{1+\tau_2^2} \\
    &\qquad \qquad \cdot \left[-\exp(-\kappa_k (\tau_1-\tau_2 +\tfrac 1 3 (\tau_1^3-\tau_2^3))) \right]_{\tau_1=\tau_2}^{\tau_1=s}  \\
    &\le \tfrac {1}\beta c^{2 }\eta^{1-\gamma_2 } \int^s_{s_0}  \ d\tau_2 \ (c_1+\tilde c_2) \eta^{-\gamma_1 }\tau_2^{-2}  + (c_2+\tilde c_1) \eta^{-\gamma_2 } \tau_2^{-\gamma-2} \\
    &\le (c_1+\tilde c_2 )  \tfrac {1}\beta c^{2 } \eta^{1-\gamma_2-\gamma_1 } [-\tau^{-1} ]_{s_0}^s+(c_2+\tilde c_1 )  \tfrac {1}\beta c^{2 } \eta^{\gamma  } [-\tau^{-\gamma -1} ]_{s_0}^s \\
    &\le  \tfrac {c^3}\beta (c_1+\tilde c_2 )+  \tfrac {c^3}\beta  (\tfrac {s\wedge s_0} \eta)^{-\gamma}  (\tilde c_1+c_2 )
\end{align*}
and 
\begin{align*}
    R_2[F_{j_2} ] &= \tfrac {\kappa_{k_0} }\beta \eta^{1-\gamma_1} \int_{s_0}^s \ d\tau_1   \int_{s_0}^{\tau_1 }\ d\tau_2 \ \tau_1^{\gamma_1 -1 } \tfrac {1+\tau_1 ^2}{1+\tau_2^2 } \exp(-\kappa_{k_0} (\tau_1-\tau_2 +\tfrac 13 (\tau_1^3 -\tau_2 ^3))) u_1(\tau_2 )\\
    &= \tfrac {\kappa_{k_0} }\beta \eta^{1-\gamma_1} \int_{s_0}^s \ d\tau_1   \int_{s_0}^{\tau_1 }\ d\tau_2 \ \tau_1^{-\gamma_2} \tfrac {1+\tau_1 ^2}{1+\tau_2^2 } \\
    &\qquad \qquad \cdot \exp(-\kappa_{k_0} (\tau_1-\tau_2 +\tfrac 13 (\tau_1^3 -\tau_2 ^3))) ((c_1+\tilde c_2) (\tfrac {\tau_2 }\eta)^{\gamma_1 } + (c_2+\tilde c_1) (\tfrac {\tau_2 }\eta)^{\gamma_2 } )\\
    &\le   \tfrac 1 \beta   \eta^{\gamma_2  }   \int_{s_0}^{s}\ d\tau_2 \ \tfrac {((c_1+\tilde c_2) (\tfrac {\tau_2 }\eta)^{\gamma_1 } + (c_2+\tilde c_1) (\tfrac {\tau_2 }\eta)^{\gamma_2 } )\tau_2^{-\gamma_2 } }{1+\tau_2^2 } \\
    &\qquad \qquad \cdot \int_{s_0}^{\tau_2 } \ d\tau_1 \  \kappa_k  (1+\tau_1 ^2) \exp(-\kappa_k (\tau_1-\tau_2 +\tfrac 13 (\tau_1^3 -\tau_2 ^3)))  \\
    &=\tfrac {1}\beta   \int_{s_0}^{s}\ d\tau_2 \ ((c_1+\tilde c_2) \tau_2^{\gamma-2 } \eta^{-\gamma }+ (c_2+\tilde c_1) \tau_2^{-2} \eta^{-\gamma_2 } )\\
    &\qquad \qquad \cdot [\exp(-\kappa_{k_0} (\tau_1-\tau_2 +\tfrac 13 (\tau_1^3 -\tau_2 ^3)))]_{\tau_1=\tau_2 }^{\tau_1=s}    \\
    &\le \tfrac {1}\beta   \int_{s_0}^{s}\ d\tau_2 \ ((c_1+\tilde c_2) \tau_2^{\gamma-2 } \eta^{-\gamma }+ (c_2+\tilde c_1) \tau_2^{-2} \eta^{-\gamma_2 } ).
\end{align*}    
We note that for the first term we obtain
\begin{align*}
    \tfrac {1}\beta   \int_{s_0}^{s}\ d\tau_2 \ \tau_2^{\gamma-2 } \eta^{-\gamma }&\le \min ( \tfrac c \beta  (\tfrac {s\vee\tilde s_0 }  \eta )^\gamma ,  \tfrac 1{\beta c } (c\eta)^{-\gamma }),
\end{align*}
since we can either integrate it directly or first pull out $s^\gamma$ and then integrate. Finally, we obtain the following estimate 
\begin{align*}  
     R_1[F_{j_2} ]&\le \min  (\tfrac 1 \beta \tfrac 1 {2c^{1+\gamma }} \eta^{-\gamma }, \tfrac c \beta (\tfrac {s\vee\tilde s_0 }  \eta )^\gamma )  (c_1+\tilde c_2 ) +\tfrac 1 \beta c (c_2+\tilde c_1 ).
\end{align*}
On $I_1$ we estimate the $j(k_0)$ influence by 
\begin{align*}
    R_1[F_{j_1}]&=c^2 \eta^{1-\gamma_2 } j(s_0)\int \tau^{\gamma_2-1 } \tfrac {1+\tau^2}{1+s_0^2}\exp(-\kappa_{k_0} (\tau-s_0+ \tfrac 1 3 ( \tau^3 -s_0^3)))\\
    &\le 4c^{2+\gamma_1 }\eta^{-1-\gamma_2 } j(s_0)\int (1+\tau^2 ) \exp(- \kappa_{k_0} (\tau-s_0 + \tfrac 1 3 ( \tau^3 -s_0^3)))\\
    &\le \tfrac {4 c^{2+\gamma_1 }}{\kappa_{k_0} \eta^{1+\gamma_2 }}j(s_0)
\end{align*}
and 
\begin{align*}
    R_2[F_{j_2}]&=\eta^{1-\gamma_1} j(s_0)\int \tau^{\gamma_1 -1 }\tfrac {1+\tau^2}{1+s_0^2}\exp(-\kappa_{k_0} (\tau-s_0+ \tfrac 1 3 ( \tau^3 -s_0^3)))\\
    &=4\eta^{-1-\gamma_1 }c^{\gamma_2 }j(s_0) \int (1+\tau^2 ) \exp(- \kappa_{k_0} (\tau-s_0 + \tfrac 1 3 ( \tau^3 -s_0^3)))\\
    &=4 \tfrac {c^{\gamma_2 }}{\kappa_{k_0} \eta^{1+\gamma_1}}j(s_0). 
\end{align*}
We estimate $j(k_0)$ by 
\begin{align*}
    j(k_0,s)&= \tfrac {1+s^2}{1+\tilde s_0^2 } \exp(-\kappa_{k_0} (s-\tilde s_0+\tfrac 1 3 (s^3-\tilde s^3_0 )))j(k_0.\tilde s_0) \\
    &+ \tfrac {\kappa_{k_0} }\beta \int^s_{\tilde s_0} d\tau \ \tfrac {1+s^2}{1+\tau^2 } \exp(-\kappa_{k_0} (s-\tau+\tfrac 1 3 (s^3-\tau^3 )))u_1(\tau )  \\
    &\le  \tfrac {4d^2}{\eta^2 } \exp(-\tfrac \kappa{2^5}\xi \eta^2) j(k_0,\tilde s_0) +\tfrac {c^2} \beta  \textbf{(}(c_1+\tilde c_2 ) (\tfrac {d } \eta )^{\gamma_1 } + ( c_2 + \tilde c_1 ) ( \tfrac {d } \eta )^ {\gamma_2 }\textbf{)}.
\end{align*}

On $I_3$ we estimate the $j(k_0)$ influence by 
\begin{align*}
    R_1[F_{j_1}]&=c^2 \eta^{1-\gamma_2 } \int \tau^{\gamma_2-1 } \tfrac {1+\tau^2}{1+s_0^2}\exp(-\kappa_{k_0} (\tau-s_0+ \tfrac 1 3 ( \tau^3 -s_0^3)))j(s_0)\\
    &\le c^{4+\gamma_1 }\eta^{\gamma_1 } \int (1+\tau^2 ) \exp(- \kappa_{k_0} (\tau-s_0 + \tfrac 1 3 ( \tau^3 -s_0^3)))j(s_0)\\
    &\le \eta^{\gamma_1 } \tfrac { c^{4+\gamma_1 }}{\kappa_{k_0} }j(s_0)
\end{align*}
and 
\begin{align*}
    R_2[F_{j_2}]&=\eta^{1-\gamma_1} \int \tau^{\gamma_1 -1 }\tfrac {1+\tau^2}{1+s_0^2}\exp(-\kappa_{k_0} (\tau-s_0+ \tfrac 1 3 ( \tau^3 -s_0^3)))j(s_0)\\
    &=\eta^{\gamma_2 }c^{2+\gamma_2 } \int (1+\tau^2 ) \exp(- \kappa_{k_0} (\tau-s_0 + \tfrac 1 3 ( \tau^3 -s_0^3)))j(s_0)\\
    &= \eta^{\gamma_2}\tfrac {c^{2+ \gamma_2 }}{\kappa_{k_0} }j(s_0).
\end{align*}
Next we want to estimate the evolution of $j(k_0)$
\begin{align*}
j(k_0,\tilde s_1 )  &= \tfrac {1+\tilde s_1 ^2}{1+d^2} \exp( -\kappa_{k_0} (\tilde s_1-d+\tfrac 13 ( \tilde s_1^3-d^3)))j(k_0,\tilde s_0 ) \\
&+\tfrac {\kappa_{k_0} }\beta  \int_{d}^{\tilde s_1} \ d\tau_2  \  \tfrac {1+s^2}{1+\tau_2^2 } \exp(-\kappa_k (s-\tau_2 +\tfrac 13 (s^3 -\tau_2 ^3))) ((c_1+\tilde c_2 ) (\tfrac {\tau_2 } \eta )^{\gamma_1 } + ( c_2 + \tilde c_1 ) ( \tfrac {\tau_2 } \eta )^ {\gamma_2 }).
\end{align*}
Therefore, we deduce 
\begin{align*}
\tfrac {\kappa_{k_0} }\beta  \int_{d}^{\tilde s_1} \ d\tau_2  \ & \tfrac {1+s^2}{1+\tau_2^2 } \exp(-\kappa_{k_0} (s-\tau_2 +\tfrac 13 (s^3 -\tau_2 ^3)))  ( \tfrac {\tau_2 } \eta )^ {\gamma_i }\\
   &\le \tfrac {\kappa_{k_0} }\beta  \left(\int_{d}^{\frac 12 \tilde s_1}+ \int_{\frac 12 \tilde s_1}^{ \tilde s_1}\right) \tfrac {1+s^2}{1+\tau_2^2 } \exp(-\kappa_{k_0} (s-\tau_2 +\tfrac 13 (s^3 -\tau_2 ^3)))  ( \tfrac {\tau_2 } \eta )^ {\gamma_i }\\
   &\le  \tfrac {\kappa_{k_0} }\beta  {\eta^{-\gamma_i}}  \tfrac {c^{1-\gamma_i}}{\gamma_i}(1+\eta^2)\exp(- \tfrac {\kappa_{k_0}}  {25} \eta^3) +  \tfrac {\kappa_{k_0} }\beta \tfrac {2^4 } {\kappa_{k_0} \eta^2 }\\
   &\le
   \tfrac {2^5}\beta\tfrac 1 {  \eta^2 },
\end{align*}
which leads to 
\begin{align*}
    \vert j(k_0,\tilde s_1 )\vert &\le c^2 \kappa_{k_0}  \eta^2 \exp(-\kappa_{k_0}\eta^3) j (k_0 ,\tilde s_0)\\
    &+ \tfrac {2^5}\beta\tfrac 1 {  \eta^2 }(c_1 +c_2 + \tilde c_1+  \tilde c_2 ).
\end{align*}
\end{proof}

\begin{lemma}[Forcing estimate ]\label{Rrest}
Let $u(s)=S(s) r(s)$ be a solution of \eqref{Echo} on $[\tilde s _0 , s^\ast ]$  such that $\vert u(s)\vert \le S^\ast (s) C(s)$. We define for $\vert n\vert \ge 2 $
\begin{align*}
    \tilde w(n) &= 2 \sum_{\vert m\vert \ge 2 } (2c)^{\vert m-n\vert+\chi} (w+    \tfrac {4} {   \kappa \xi  }   j)(k_m, \tilde s_0)  \\
    &+ (2c)^{\vert \vert m\vert -2 \vert}c (2 c_1^\ast + \tfrac 1 {c^2}c_2^\ast ) \\
    &+( 2c)^{\vert m\vert -1 }(u_3( \tilde s_0) +\tfrac 2{  \kappa \xi\eta } (j(k_{\pm1},\tilde s_0 ))
\end{align*}
where $ \chi=\chi(m,n)  =-\vert \sgn(m)-\sgn(n)\vert $. Then we estimate
\begin{align*}
    R_1[F_{\tilde w }]&= 2c^2 (\tilde w(2)+\tilde w(-2)) \\
    R_2[F_{\tilde w }]&= c^2 (\tilde w(2)+\tilde w(-2))(\tfrac {s\vee  \tilde s_0}\eta)^{\gamma_1 }
\end{align*}
and
\begin{align*}
    R_1[F_{j(k_{\pm 1} )}]&= \tfrac {2c}{\kappa \xi \eta }  j(k_{\pm 1},  \tilde s_0 )  +  \tfrac {2c} {\beta \kappa \xi  } (\tilde w(1) + c_1^\ast + c_2^\ast ) \\
    R_2[F_{j(k_{\pm 1})}]&= \tfrac {2c} {\kappa \xi \eta }  j(k_{\pm 1},  \tilde s_0 )(\tfrac {s\vee  \tilde s_0}\eta)^{\gamma_1 } + \tfrac c {\beta \kappa \xi  } (\tilde w(1) + c_1^\ast + c_2^\ast ) (\tfrac {s\vee  \tilde s_0}\eta)^{\gamma_1 }  
\end{align*}
and
\begin{align*}
    R_1[ F_{u_3}]&=  2c\tilde w(1) \\
    R_2[ F_{u_3}]&=  2c\tilde w(1) (\tfrac {s\vee  \tilde s_0}\eta)^{\gamma_1 }.
\end{align*}
Furthermore, we estimate 
\begin{align*}
    \vert w(k_n,s) \vert &\le \tilde w(n)& \vert n \vert \ge 2  \\
    \vert u_3\vert &\le \tilde w(1) = \tilde w(-1) \\
    \vert j(k_n )\vert &\le 2 e^{-\tfrac 1 2 \kappa \xi\eta  ( s -  \tilde s_0 )}j(k_n,  \tilde s_0 )+4\tfrac {1 }  {\beta   \eta^2  } \tilde w(n) .
\end{align*}

\begin{proof}
To estimate $w(k_n,s)$ we without loss of generality assume that $n\ge 2$. We begin with the case $n\ge 3$, where we deduce that 
\begin{align*}
    \partial_s w(k_n)&= a(k_{n+1})w(k_{n+1})- a(k_{n-1}) w(k_{n-1})-  j(k_n)\\
    &\le \tfrac {c }\eta (\tilde w(n-1)+\tilde w(n+1))+2 e^{-\kappa \xi\eta  ( s -  \tilde s_0 )}j(k_0+n,  \tilde s_0 )+4\tfrac {1 }  {\beta  \eta^2 } \tilde w(n)  .
\end{align*}
We estimate  
\begin{align*}
    2  \int e^{-\frac 1 2 \kappa \xi\eta (\tau- \tilde s_0)}j(k_0+n, \tilde s_0 )  &\le 4   \tfrac {1} {\kappa \xi\eta} j(k_n, \tilde s_0 ) .
\end{align*}
Thus integrating $\partial_s w(k_0+n)$ over time yields 
\begin{align*}
    w(k_n)&\le w(k_n,\tilde s_0)+ c  (\tilde w(n-1)+\tilde w(n+1))+4   \tfrac {1} {\kappa \xi\eta} j(k_0+n, \tilde s_0 )+\tfrac {4 }  {\beta  \kappa \xi } \tilde w(n) \\
    &< \tilde w(n).
\end{align*}
For the case $n=2 $ we deduce 
\begin{align*}
    \partial_s w(k_2)&= a(k_3)w(k_3)- a(k_1) \tfrac 1 2 ( u_3 + u_2) -  j(k_2)\\
    &\le \tfrac {c }\eta (\tilde w(3)+2\tilde w(1)  +2 c_1^\ast(\tfrac s \eta)^{\gamma_1 -1 }  + 2 c_2^\ast(\tfrac s \eta)^{\gamma_2-1 }   )+ 2 e^{-\frac 1 2 \kappa \eta \xi (s- \tilde s_0)}j(k_2, \tilde s_0) + 4\tfrac {1 }  {\beta  \kappa \xi\eta } \tilde w(2)  \\
    w(k_2)&\le w(k_2, \tilde s_0)+ \tfrac 4 { \kappa \xi  \eta}  j(k_2, \tilde s_0)+ c(\tilde w(3)+2\tilde w(1) + 2c_1^\ast + \tfrac 1{c^2} c_2^\ast  )+ 4\tfrac {1 }  {\beta  \kappa \xi } \tilde w(2)\\
    &< \tilde w(2) .
\end{align*}
We estimate $u_3$ by 
\begin{align*}
    \partial_s u_3 &= a(k+2)w(k_2) - a(k-2 ) w(k_{-2} ) - j(k_1)+  j(k_{-1}  ) \\
    &\le \tfrac {2c} \eta (w(2)+ w(-2) )+2 e^{-\frac 1 2  \kappa \xi\eta  ( s -  \tilde s_0 )}j(k_{\pm1},  \tilde s_0 )+ \tfrac 4 {\beta   \kappa \xi\eta }  (\tilde w(1) + c_1^\ast +c_2^\ast ) \\
    \vert u_3\vert &\le \vert u_3( \tilde s_0) \vert +  c(\tilde w(2) + \tilde w(-2))+ \tfrac 4 {\kappa \xi\eta } (j(k_{\pm1},\tilde s_0 )+ \tfrac 1\beta \tilde w(1) + \tfrac 1\beta c_1^\ast +\tfrac 1 {\beta } c_2^\ast  )\\
    &\le \tilde w(1). 
\end{align*}
Non-resonant $j$ will often be estimated similarly. Therefore we will use the following notation frequently. We estimate  $j(k_n) $ for $n\ge 2$  by writing $\hat s = s-\tfrac {k_0(k_0-k)}{k+1} \eta $ and $\hat \tau = \tau - \tfrac {k_0(k_0-k)}{k+1} \eta$ 
\begin{align*}
    \partial_s j(k_n)&= -\kappa_{k_n} (1+\hat s^2 ) j(k_n)+2 \tfrac {\hat s }{1+\hat s^2 }j(k_n)+\tfrac 1 \beta \kappa_{k_n}w(k_n)
\end{align*}
which gives 
\begin{align*}
    j(k_n) &\le \tfrac {1+\hat s^2} {1+\hat { \tilde s }_0^2} e^{-\kappa_{k_n}  ((\hat s -   \hat {\tilde s}_0 + \frac 1 3 (\hat s^3 - \hat  {\tilde s}_0^3 ) )}j(k_n,  \tilde s_0 )\\
    &+ \tfrac 1 \beta \kappa_{k_n}   \int \ d\tau \ \tfrac{1+\hat s^2} {1+\hat \tau^2 } e^{-\kappa_{k_n} ((\hat s - \hat \tau + \frac 1 3 (\hat s^3 - \hat \tau^3 ) )}\tilde w(n)  \\
\end{align*}
For $  \tilde s_0 \le   \tau \le   s\le  \tilde s_1$ we obtain 
\begin{align*}
    \kappa_{k_n} (\hat s -\hat \tau+\tfrac 1 3 ( \hat s^3 - \hat \tau^3))&=\kappa_{k_n}\tfrac 1 3 ( s - \tau)(\hat s^2+ \hat s \hat \tau + \hat \tau^2 +1) \\
&\ge \tfrac 1 2  \kappa\max({k_n^2},k_0^2) \eta^2 (s-\tau) \\
    \tfrac {1+\hat s^2}{1+\hat \tau}&\le 2.
\end{align*}
So we infer 
\begin{align*}
    j(k_n) 
    &\le2 e^{-\frac 12  \kappa \xi\eta  (s-\tilde s_0)}j(k_n,  \tilde s_0 )\\
    &+2 \kappa_{k_n}\tfrac 1\beta  \int \ d\tau \   e^{-\frac 12  \kappa \xi\eta  (s-\tau)}\tilde w(n) \\
    &\le 2 e^{-\tfrac 1 2 \kappa \xi\eta  (s-\tilde s_0)}j(k_n,  \tilde s_0 )+\tfrac {4}  {\beta \eta^2 } \tilde w(n). 
\end{align*}

We next turn to the estimate of $j(k_{\pm 1} )$, where we without loss of generality consider $j(k_1)$. With the equation
\begin{align*}
    \partial_s j(k_1)&= (2 \tfrac {\hat s }{1+\hat s^2 }-\kappa_{k_1 } (1+\hat s^2 )) j(k_1)+\tfrac {\kappa_{k_1}}{2\beta} \kappa_{k_1} (u_3 +u_2 )
\end{align*}
we estimate 
\begin{align*}
    j(k_1) 
    &\le2 e^{-\frac 12   \kappa \xi\eta ( s -   \tilde s_0 )}j(k_1,  \tilde s_0 )\\
    &+ \tfrac {\kappa_{k_1}}{2\beta}  \int \ d\tau \  e^{-\frac 12   \kappa \xi\eta ( s -  \tau )}(\tilde w(1) + c_1^\ast (\tfrac \tau \eta)^{\gamma_1-1}+c_2^\ast  (\tfrac \tau \eta)^{\gamma_2-1})  \\
    &\le 2 e^{-\frac 1 2  \kappa \xi\eta  ( s -  \tilde s_0 )}j(k_1,  \tilde s_0 )+ \tfrac 2 {\beta  \eta^2 }  (\tilde w(1) + c_1^\ast +c_2^\ast ).
\end{align*}
Given these estimates, we next consider the effects on $R[\cdot]$ by forcing:
\begin{align*}
    F_{\tilde w}&= e_1 (a(k_2)w(k_2)+a(k_{-2})w(k_{-2})\\
    &\le e_1 \tfrac {c} \eta \tilde w(2) +e_1\tfrac {c} \eta \tilde w(-2) .
\end{align*}
For constant $e_2$ functions we estimate
\begin{align*}
    R_1[e_2]&\le \tfrac {c }2\eta  \\
    R_2[e_2]&\le \tfrac c 3  \eta (\tfrac {s\vee  \tilde s_0}\eta)^{\gamma_1 }.
\end{align*}
Therefore, we can control $F_{\tilde w}$ by 
\begin{align*}
    R_1[F_{\tilde w}]&= c^2 (\tilde w(2)+\tilde w(-2)) \\
    R_2[F_{\tilde w}]&= c^2 (\tilde w(2)+\tilde w(-2))(\tfrac {s\vee  \tilde s_0}\eta)^{\gamma_1 }.
\end{align*}
For $F_{j(k_{\pm 1})}$ we use 
\begin{align*}
    F_{j(k_{\pm 1})}&= -e_2 j(k_{\pm 1}) \\
    &\le e_2 ( 2 e^{-\frac 1 2  \kappa \xi\eta  ( s -  \tilde s_0 )}j(k_{\pm1},  \tilde s_0 )+ \tfrac 2 {\beta   \kappa \xi\eta }  (\tilde w(1) + c_1^\ast +c_2^\ast )) ,
\end{align*}
to estimate
\begin{align*}
    R_1[F_{j(k_{\pm 1} )}]&= \tfrac {2c}{\kappa \xi \eta }  j(k_{\pm 1},  \tilde s_0 )  +  \tfrac {2c} {\beta \kappa \xi  } (\tilde w(1) + c_1^\ast + c_2^\ast ) \\
    R_2[F_{j(k_{\pm 1})}]&= \tfrac {2c} {\kappa \xi \eta }  j(k_{\pm 1},  \tilde s_0 )(\tfrac {s\vee \tilde s_0}\eta)^{\gamma_1 } + \tfrac c {\beta \kappa \xi  } (\tilde w(1) + c_1^\ast + c_2^\ast ) (\tfrac {s\vee  \tilde s_0}\eta)^{\gamma_1 } .
\end{align*}
Furthermore, for $F_{u_2}$ we estimate
\begin{align*}
    F_{u_3}&= e_1 a_2 u_3 \\
    &\le e_1 \tfrac c \eta  \tilde w(1)
\end{align*}
and 
\begin{align*}
    R_1[e_1]&\le \eta \\
    R_2[e_1]&\le   \eta (\tfrac {s \vee \tilde s_0}\eta)^{\gamma_1 }.
\end{align*}
to deduce 
\begin{align*}
    R_1[ F_{u_3}]&\le  c\tilde w(1) \\
    R_2[ F_{u_3}]&\le  c\tilde w(1) (\tfrac {s\vee \tilde s_0}\eta)^{\gamma_1 }.
\end{align*}
\end{proof}
\end{lemma}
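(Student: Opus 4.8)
The plan is to first get pointwise control of the ``slaved'' unknowns --- the non-resonant vorticities $w(k_n)$ with $|n|\ge2$, the symmetric combination $u_3=w(k_1)+w(k_{-1})$, and all the currents $j(k_n)$ --- and only then to feed those bounds into the functionals $R_1,R_2$ of the preceding lemma. For a current, $j(k_n)$ solves a scalar linear ODE that I would solve by the integrating factor, whose kernel is $\tfrac{1+\hat s^2}{1+\hat\tau^2}\exp(-\kappa_{k_n}(\hat s-\hat\tau+\tfrac13(\hat s^3-\hat\tau^3)))$. The decisive point is the bound $\kappa_{k_n}(\hat s-\hat\tau+\tfrac13(\hat s^3-\hat\tau^3))=\tfrac13\kappa_{k_n}(s-\tau)(\hat s^2+\hat s\hat\tau+\hat\tau^2+1)\ge\tfrac12\kappa\max(k_n^2,k_0^2)\eta^2(s-\tau)\ge\tfrac12\kappa\xi\eta(s-\tau)$, which holds on $I^{k_0}$ since $k_0^2\eta^2=\xi\eta$ and since on that interval the recentred time $\hat s$ of a non-resonant mode is bounded below in size by a fixed multiple of $|n|\eta$, so $k_n^2\hat s^2$ always dominates $\xi\eta$; the polynomial prefactor is then absorbed by this exponential. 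This yields $|j(k_n)(s)|\le 2e^{-\frac12\kappa\xi\eta(s-\tilde s_0)}|j(k_n,\tilde s_0)|+\tfrac{4}{\beta\eta^2}\sup_{[\tilde s_0,s]}|w(k_n)|$, using $\kappa_{k_n}\cdot(\kappa k_n^2\eta^2)^{-1}=\eta^{-2}$, and the same computation gives the bound for $j(k_{\pm1})$, which is forced by $\tfrac{\kappa_{k_1}}{2\beta}(u_3\pm u_2)$ instead.

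For the vorticities I would integrate $\partial_s w(k_n)=a(k_{n+1})w(k_{n+1})-a(k_{n-1})w(k_{n-1})-j(k_n)$ over $I^{k_0}$, whose length is of order $\eta$, inserting the current bound and using $a(k_m)\le c/\eta$ for $|m|\ge2$; this becomes a discrete convolution inequality in $n$ that a bootstrap with the geometric weight $(2c)^{|m-n|+\chi}$ closes at exactly the stated $\tilde w(n)$ --- the sign-correction $\chi$ bookkeeping the extra amplification when the chain crosses the resonant cluster $\{k_0,k_{\pm1}\}$, and the boundary index $n=2$, which couples to $a(k_1)\tfrac12(u_2+u_3)$, producing the $c^{-2}c_2^\ast$ term once one integrates the $|s/\eta|^{\gamma_2-1}$ part of $|u_2|$. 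Since $a(k_0)$ enters $w(k_1)$ and $w(k_{-1})$ with opposite signs, it cancels in $u_3$, so $u_3$ obeys the same non-resonant estimate and the chosen shape of $\tilde w(1)$ makes $|u_3|\le\tilde w(1)$.

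With those bounds in hand the forcing estimates are mechanical: insert $F_{\tilde w}=e_1(a(k_2)w(k_2)+a(k_{-2})w(k_{-2}))$, $F_{j(k_{\pm1})}=-e_2j(k_{\pm1})$ and $F_{u_3}=e_1a_2u_3$ into $R_1[F]=2c^2\eta^{1-\gamma_2}\int_{\tilde s_0}^s\tau^{\gamma_2-1}F_1+c\eta^{-\gamma_2}\int_{\tilde s_0}^s\tau^{\gamma_2}F_2$ and its $R_2$ analogue. Everything then reduces to two integrals: $R_i$ of a constant times $e_1$ or $e_2$, which is a power of $\eta$ times $\gamma_i^{-1}$ (and $2c^2/\gamma_2=\gamma_1=O(1)$ keeps the constants bounded), and $R_i$ of $e^{-\frac12\kappa\xi\eta(s-\tilde s_0)}$ times a constant, which is $O((\kappa\xi\eta)^{-1})$ times a boundary power of $s$ and produces the $\tfrac{2c}{\kappa\xi\eta}j(k_{\pm1},\tilde s_0)$ and $\tfrac{c}{\beta\kappa\xi}(\tilde w(1)+c_1^\ast+c_2^\ast)$ pieces. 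Substituting the explicit ``constants'' --- $\tfrac c\eta(\tilde w(2)+\tilde w(-2))$, then $|j(k_{\pm1},\tilde s_0)|+\tfrac2{\beta\eta^2}(\tilde w(1)+c_1^\ast+c_2^\ast)$, then $\tfrac c\eta\tilde w(1)$ --- yields the displayed $R_1,R_2$ estimates.

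The step I expect to be the real obstacle is the self-consistency of the bootstrap that defines $\tilde w(n)$: its right-hand side already carries $c_1^\ast,c_2^\ast$ (which encode the $(u_1,u_2)$-control) and, through $j(k_n)$, a fed-back multiple of $\sup|w(k_n)|$, so one must verify that ``a priori bound $\mapsto$ improved bound'' is a genuine contraction on the weighted sequence space. This comes down to the small parameters $c\le10^{-4}$, $1/(\beta\kappa\xi)$ and $k^2/(\beta\xi)$ --- the last two controlled by $\xi\ge10\kappa^{-1}(1+\beta^{-1})$ and $\xi/k^2\ge10d$ --- beating the $O(1)$ combinatorial constants that accumulate along the chain of neighbouring modes. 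The secondary technical point is the interval fact underlying the current estimates, namely that $|\hat s|$ and $|\hat\tau|$ stay of order $\eta$ (indeed grow with $|n|$) throughout $I^{k_0}$; this holds precisely because $I^{k_0}$ sits strictly inside the resonant interval of $k_0$ and is disjoint from the resonant intervals of all $k_n$ with $|n|\ge1$, which is exactly Definition \ref{defi:Ik} combined with $\xi/k^2\ge10d$.
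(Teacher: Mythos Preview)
Your proposal is correct and follows essentially the same approach as the paper: first obtain pointwise control of the non-resonant currents $j(k_n)$ via the integrating factor and the key lower bound $\kappa_{k_n}(\hat s-\hat\tau+\tfrac13(\hat s^3-\hat\tau^3))\ge\tfrac12\kappa\xi\eta(s-\tau)$, close a geometric-weight bootstrap for the $w(k_n)$ and $u_3$ (with the boundary case $n=\pm2$ coupling to $u_2$ and producing the $c^{-2}c_2^\ast$ term), and then plug these bounds into the $R_i$ functionals, reducing to the elementary integrals $R_i[e_1]$, $R_i[e_2]$ and $R_i[e^{-\frac12\kappa\xi\eta(\cdot-\tilde s_0)}]$. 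Your identification of the bootstrap closure as the main technical point, controlled by the smallness of $c$, $(\beta\kappa\xi)^{-1}$ and $k^2/(\beta\xi)$, matches exactly how the paper absorbs the feedback terms.
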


\begin{proof}[Proof of Proposition \ref{I1pro}]
For the interval $I_1$ we have $\tilde s_0=s_0$, $\tilde s_1 =-d$. The initial data of $r$ can be calculated by  $r( \tilde s_0)= S^{-1}(s_0)u(s_0 ) $ and so 
\begin{align*}
    r_1( \tilde s_0)&= -\tfrac {\gamma_2 }\gamma (\tfrac {k_0}{2(k_0+1)})^{\gamma_2-1}u_1( \tilde s_0) +  \tfrac c \gamma (\tfrac {k_0}{2(k_0+1)})^{\gamma_2 }   u_2( \tilde s_0) \\
    &\approx -4c^2u_1( \tilde s_0)+c u_2( \tilde s_0),\\
    r_2( \tilde s_0)&= \tfrac {\gamma_1 }\gamma (\tfrac {k_0}{2(k_0+1)})^{\gamma_1-1}u_1( \tilde s_0) - \tfrac c\gamma  (\tfrac {k_0}{2(k_0+1)})^{\gamma_1 }   u_2( \tilde s_0) \\
    &\approx u_1( \tilde s_0) -\tfrac c 2  u_2( \tilde s_0).
\end{align*}
For other initial data we define 
\begin{align*}
   N  &= \sum_{\vert m \vert \ge 2 } (2c)^{\vert m \vert   }(w+\tfrac 8 {\kappa \eta \xi  }   j)(k_m, \tilde s_0) \\
   &+ 2  c  (u_3( \tilde s_0)+\tfrac 8 {\kappa \eta \xi  }   j(k_{\pm 1 }, \tilde s_0))\\
   &+ \tfrac {2c} {\xi \kappa} j(k_0,s_0),
\end{align*}
to bound the impact of the less important terms in the following bootstrap.  Let $C(s)$ be defined by the terms 
\begin{align*}
    c_1&= 45c^2 u_1(s_0) +2c u_2(s_0 )+ 2N,\\
    \tilde c_1 &= 2\tfrac {c^3}{\beta \vee 1 } c_2, \\
    c_2 &=  2u_1(s_0)+ 45c u_2(s_0)+2N,\\
    \tilde c_2&=0.
\end{align*}
AS $c_1>r_1(\tilde s_0)$ and $c_2>r_2(\tilde s_0)$ and we have a smooth solution, the estimate $\vert u \vert \le  S^\ast(s) C(s)$  holds at least for a small time. Let $s^\ast$ be the maximal time such that  $\vert u \vert \le  S^\ast (s) C(s)$. We then aim to show that necessarily $s^\ast \geq -d$, since otherwise the estimate improves, which contradicts the maximality. By Lemma \ref{R3mode}, Lemma \ref{Rpsi} and Lemma \ref{Rrest} we estimate 
\begin{align*}
     R_1[F_{all}  ]&= R_1[F_{3mode}]+R_1[F_{j}]+ R_1 [F_{\tilde w }]+R_1[F_{j(k_0\pm 1)}]+R_1[F_{u_3}]\\
     &=20 c^2c_1 +(20+c^4(\tfrac s \eta)^{-\gamma} )\tilde c_1  +(20c^2+c^4(\tfrac s\eta)^{-\gamma} )c_2\\
     &+\tfrac {c^3}\beta c_1+  \tfrac {c^3}\beta  (\tfrac s  \eta)^{-\gamma}  (\tilde c_1+c_2 )+\tfrac {4 c^{2+\gamma_1 }}{\kappa_{k_0} \eta^{1+\gamma_2 }}j(k_0, s_0)\\
     &+ 2c^2 (\tilde w(2)+\tilde w(-2))\\
     &+\tfrac {2c}{\kappa \xi \eta }  j(k_{\pm 1},  \tilde s_0 )  +  \tfrac {2c} {\beta \kappa \xi  } (\tilde w(1) + c_1^\ast + c_2^\ast ) \\
     &+2c\tilde w(1) \\
     &< 21c^2 c_1 + \tilde c_1 (21 +\tfrac {c^3}\beta (\tfrac s\eta )^{-\gamma})+ c_2 ( 21c^2 + \tfrac {c^3}\beta  (\tfrac s \eta)^{-\gamma} )+ N
\end{align*}
and 
\begin{align*}
    R_2[F_{all}  ]&= R_2[F_{3mode}]+R_2[F_{j}]+ R_2[F_{\tilde w }]+ R_2[F_{j(k_0\pm 1)}]+R_2[F_{u_3}]\\
     &=20c_1  + 20\tilde c_1 + 20 c^2 c_2 \\
     &+\tfrac c \beta  c_1+\tfrac c \beta (c_2+\tilde c_1 ) +4 \tfrac {c^{\gamma_2 }}{\kappa_{k_0} \eta^{1+\gamma_1}}j(k_0, s_0) \\
     &+ c^2 (\tilde w(2)+\tilde w(-2))\\
     &+ \tfrac {2c} {\kappa \xi \eta }  j(k_{\pm 1},  \tilde s_0 ) + \tfrac c {\beta \kappa \xi  } (\tilde w(1) + c_1^\ast + c_2^\ast ) \\
     &+2c\tilde w(1) \\
     &<  21 c_1+ 21 \tilde c_1 + \tfrac c {\beta\wedge  1}  c_2 + N.
\end{align*}
We split $R_1$ as
\begin{align*}
    R_1[all ]&=R_1[all ][1]+R_1[all ][(\tfrac s \eta)^\gamma ],
\end{align*}
into the part with and without a $(\tfrac s \eta)^\gamma $ term, respectively. We then estimate
\begin{align*}
    r_1( \tilde s_0) + R_1[all ][1]&<r_1( \tilde s_0)+  21c^2  c_1 +21 \tilde c_1 + 21 c^2 c_2 +N, \\
    R_1[all ][(\tfrac s \eta)^\gamma ]&< \tfrac {c^3}{1\wedge \beta} \tilde c_1 + \tfrac {c^3}{1\wedge \beta} c_2, \\
    r_2( \tilde s_0) + R_2[all ][1]&< r_2( \tilde s_0)+ 21 c_1+ 21 \tilde c_1 + 2\tfrac c {1\wedge \beta} c_2 + N,
\end{align*}
and thus we conclude the bootstrap that
\begin{align*}
    r_1( \tilde s_0) + R_1[all ][1]&< c_1  \\
    R_1[all ][(\tfrac s \eta)^\gamma ]&< \tilde c_1 \\
    r_2( \tilde s_0) + R_2[all ][1] &<  c_2 .
\end{align*}
We can therefore extend the estimates past the time $s^\ast$, which contradicts the maximally. Therefore, we obtain that for all times $s\leq -d$ it holds that
\begin{align*}
    \vert u(s) \vert \le  S^\ast(s) C(s),
\end{align*}
which yields the upper bound 
\begin{align*}
    \vert u(-d) \vert &\le \left(
    \begin{array}{c}
         (c\eta)^{-\gamma_1}c_1+ (\tilde c_1 +c_2 )(c\eta)^{-\gamma_2 } \\
         \tfrac 1 c (c\eta)^{1-\gamma_1}c_1+ (\tfrac 1 c \tilde c_1 +c c_2 )(c\eta)^{1-\gamma_2 }
    \end{array}\right)\\
    &\le 2 M \left( \begin{array}{c}
         (c\eta)^{-\gamma_2 }\\
         (c\eta)^{1-\gamma_2 }
    \end{array}\right).
\end{align*}
We next aim to establish an estimate on $\tilde w(n)$. For this purpose we note that 
\begin{align*}
    c (2 c_1^\ast + \tfrac 1 {c^2}c_2^\ast )&\approx 2c_1 +\tfrac 1 {c^2} \tilde c_1 + 2c_2 \\
    &\approx 2c_1 +2c_2 \\
    &\le 4 u_1(s_0) +  100 c u_2(s_0) + 3 N
\end{align*}
and 
\begin{align*}
    \tilde w(n) &\le 2 \sum_{\vert m\vert \ge 2 } (2c)^{\vert m-n\vert+\chi} (w+    \tfrac {4} {\kappa \xi \eta }   j)(k_m, \tilde s_0)  \\
    &+ (2c)^{\vert \vert n\vert -2 \vert}c (2 c_1^\ast + \tfrac 1 {c^2}c_2^\ast )  \\
    &+( 2c)^{\vert n\vert -1 }(u_3( \tilde s_0)+\tfrac 2{\kappa \xi \eta}j(k_{\pm 1} ,\tilde s_0 ) ) .
\end{align*}
We hence deduce that 
\begin{align*}
    \tilde w(n) &\le 2 \sum_{\vert m\vert \ge 2 } (2c)^{\vert m-n\vert+\chi} (w+    \tfrac {4} {\kappa \xi \eta }   j)(k_m, \tilde s_0)  \\
    &+ (2c)^{\vert \vert n\vert -2 \vert}( 4 u_1(s_0) +  100 c u_2(s_0) + 3N ) \\
    &+( 2c)^{\vert n\vert -1 }(u_3( \tilde s_0)+\tfrac 2{\kappa \xi \eta}j(\tilde s_0, k_{\pm 1} )   ) \\
    &\le 2  M_n, 
\end{align*} 
when $\chi = -\vert \sgn(m)-\sgn(n)\vert $. To prove \eqref{echo_I1} under the condition \eqref{addI1} we estimate 
\begin{align*}
    \vert u(-d)-\tilde u(-d) \vert &\le S(-d)  R[all]\\
    &\le   u_1( \tilde s_0)\left( \begin{array}{c}
         (c\eta)^{-\gamma_2 }\\
         5c (c\eta)^{\gamma_1 }
    \end{array}
    \right).
\end{align*}

Furthermore, we use 
\begin{align*}
    \tilde u (-d) &= \left( 
    \begin{array}{cc}
        (c\eta)^{-\gamma_1 }&(c\eta)^{-\gamma_2 }\\
        -\tfrac {\gamma_1 }{2c}(c\eta)^{1-\gamma_1 }&-\tfrac {\gamma_2 }{2c}(c\eta)^{1-\gamma_2 }
    \end{array}
    \right)
    \left( 
    \begin{array}{cc}
    4c^2u_1( \tilde s_0)-2c u_2( \tilde s_0)\\
    u_1( \tilde s_0) +c u_2( \tilde s_0)
    \end{array}
    \right)\\
    &\approx   u_1( \tilde s_0)\left( 
    \begin{array}{cc}
    (c\eta)^{-\gamma_2} \\
    O(c) (c\eta)^{\gamma_1}
    \end{array}\right)
\end{align*}
and thus
\begin{align*}
     \vert u_1 (-d)-(c\eta)^{-\gamma_2}u_1( \tilde s_0)\vert      &=10c   u_1( \tilde s_0)(c\eta)^{-\gamma_2}  \\
     \vert u_2 (-d)\vert &\le 10c u_1( \tilde s_0)(c\eta)^{\gamma_1}.
\end{align*}
The remaining terms can be estimated by 
\begin{align*}
    M&\le \tfrac 1 {1-10^{-1} } u_1( \tilde s_0),\\
    M_n&\le \tfrac 4 {1-10^{-1} } u_1( \tilde s_0).
\end{align*}

\end{proof}

\subsection{The Resonance and Upper Bounds in $I_2$}  \label{SecI2}

The bounds on the evolution of \eqref{Echo} on the interval $I_2=[-d,d]$ are summarized in the following proposition:
\begin{pro}\label{I2_pro} 
Let $c\le \min ( (8\pi)^{-\frac 4 3 } \beta^{\frac {16} 3 }, 10^{-4} )$. Consider a solution of \eqref{Echo} on the interval $I=[s_0,d]$, then it holds that
\begin{align*}
    \vert u_1(d)\vert 
    &\le 3 (c\eta)^{-\gamma_2}LM,\\
    \vert u_2(d) \vert 
    &\le 7\pi   ( c \eta )^{\gamma_1 }LM,\\
    \vert u_3 (d)\vert
    &\le  7\pi  (\tfrac 5 \eta)^2  ( c \eta )^{\gamma_1 }LM +  2 M_1,\\
    \vert w(k_n,d)\vert  
    &\le  7\pi  (\tfrac 5 \eta)^{\vert n\vert-1 }  ( c \eta )^{\gamma_1 }LM+2 M_n,\\
        \vert j (k_n,d)\vert  
    &\le \tfrac 4 {\beta \eta^2 }  (7\pi   (\tfrac 5 \eta)^{\vert n\vert-1 }  ( c \eta )^{\gamma_1 }LM+2 M_n ),\\
       \vert  j(k_0,d) \vert 
    &\le \tfrac {4}\beta \min ( \kappa_{k_0} \pi d^2 , 1)  (c\eta)^{-\gamma_2}L M .
\end{align*}
\end{pro}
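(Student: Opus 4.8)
The plan is to combine Proposition~\ref{I1pro}, which controls the solution up to the time $-d$, with a direct bootstrap on the resonant interval $I_2=[-d,d]$; note that the explicit power-law solution $S(s)$ used on $I_1$ and $I_3$ exploits $\tfrac{1}{1+s^2}\approx\tfrac{1}{s^2}$ for $|s|\gg1$ and is not available here, where $|s|\le d=c^{-1}$. So first I would invoke Proposition~\ref{I1pro} to obtain, at $s=-d$, the bounds $|u_1(-d)|\le2M(c\eta)^{-\gamma_2}$, $|u_2(-d)|\le2M(c\eta)^{\gamma_1}$, $|u_3(-d)|\le2M_1$, $|w(k_n,-d)|\le2M_n$, $|j(k_0,-d)|\le\tfrac c\beta(c\eta)^{-\gamma_2}M\min(c,\kappa_{k_0}c^{-2})$ and $|j(k_n,-d)|\le\tfrac{4}{\beta\eta^2}M_n$, and it then remains to propagate these forward across $I_2$. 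The two structural facts I would use repeatedly are $\int_{-d}^{d}\tfrac{ds}{1+s^2}=2\arctan d\le\pi$ and the coefficient sizes $a(k_0)=c\eta\tfrac{1}{1+s^2}\le c\eta$, $a(k_{\pm1})\le4c/\eta$, $a(k_n)\le c/\eta$ for $|n|\ge2$, together with the contractivity on $I_2$ of the homogeneous current evolution, since $\tfrac{1+s^2}{1+d^2}\exp(-\kappa_{k_0}(s+d+\tfrac13(s^3+d^3)))\le1$ for $-d\le s\le d$.

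The heart of the argument is the bootstrap for $u_1=w(k_0)$, which I would run with the ansatz $|u_1(s)|\le3(c\eta)^{-\gamma_2}LM$ on $[-d,s^\ast]$, where $L=L(\alpha,\kappa,k_0)$ is the growth factor of Appendix~\ref{LApp} (equal to $1$ when $\beta\ge\pi$ and satisfying $cL\ll1$). Treating the pair $\partial_s u_1=-j(k_0)-a_1u_2+a_2u_3+(\text{outer modes})$, $\partial_s j(k_0)=\tfrac{\kappa_{k_0}}{\beta}u_1+b(k_0)j(k_0)$ as a linear system forced by the remaining modes, the growth-factor estimate of Appendix~\ref{LApp} bounds $\sup_{I_2}|u_1|$ by a constant multiple of $L(|u_1(-d)|+|j(k_0,-d)|)$ plus $L\int_{-d}^{d}(a_1|u_2|+|a_2||u_3|+\dots)\,ds$. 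Under the ansatz I would first bound the current by Duhamel --- the homogeneous part by $|j(k_0,-d)|$ and the forced part by $\tfrac{\kappa_{k_0}}{\beta}\int_{-d}^{s}\tfrac{1+s^2}{1+\tau^2}\exp(-\kappa_{k_0}(\dots))|u_1|\,d\tau\le\tfrac4\beta\min(\kappa_{k_0}\pi d^2,1)(c\eta)^{-\gamma_2}LM$, where the $\kappa_{k_0}\pi d^2$ branch comes from pulling out $1+s^2$ and using $\int\tfrac{d\tau}{1+\tau^2}\le\pi$ and the $1$ branch from writing $\kappa_{k_0}(1+\tau^2)\exp(\dots)=\partial_\tau[-\exp(\dots)]$ --- which is exactly the last asserted estimate. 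Then I would bound $u_2$: integrating $\partial_s u_2=2c\eta\tfrac{1}{1+s^2}u_1+(\text{small})$ gives $|u_2(d)|\le|u_2(-d)|+2c\eta\cdot\pi\cdot3(c\eta)^{-\gamma_2}LM+(\text{small})\le7\pi(c\eta)^{\gamma_1}LM$, using $c\eta(c\eta)^{-\gamma_2}=(c\eta)^{\gamma_1}$. Feeding this back, the off-diagonal contribution $L\int_{-d}^{d}a_1|u_2|\,ds\le L\cdot\tfrac{4c}{\eta}\cdot2d\cdot7\pi(c\eta)^{\gamma_1}LM=56\pi cL^2(c\eta)^{-\gamma_2}M$ (using $\eta^{-1}(c\eta)^{\gamma_1}=c(c\eta)^{-\gamma_2}$) and the other $L$-weighted back-coupling terms are absorbed into the constant $3$ precisely because $c\le(8\pi)^{-4/3}\beta^{16/3}$ keeps $cL^2$ (and similar products) small; this closes the bootstrap and simultaneously yields the claimed $u_1$ and $j(k_0)$ bounds.

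Finally I would handle $u_3$ and $w(k_n),j(k_n)$ for $|n|\ge2$ by a nested bootstrap in $|n|$, parallel to Lemma~\ref{Rrest}: $\partial_s u_3$ has no $u_1$ or $u_2$ term (those cancel in $w(k_1)+w(k_{-1})$) and is forced only through $w(k_{\pm2})$ and $j(k_{\pm1})$, while $\partial_s w(k_n)$ couples only to $w(k_{n\pm1})$ and $j(k_n)$; each step $n\mapsto n\pm1$ costs a factor $\int_{-d}^{d}a(k_{n\pm1})\,ds\le\tfrac c\eta\cdot2d=\tfrac2\eta\le\tfrac5\eta$, so the growing part propagated out of $u_2$ acquires $(\tfrac5\eta)^{|n|-1}$ for $w(k_n)$ and one further power for $u_3$, giving $|w(k_n,d)|\le7\pi(\tfrac5\eta)^{|n|-1}(c\eta)^{\gamma_1}LM+2M_n$ and $|u_3(d)|\le7\pi(\tfrac5\eta)^2(c\eta)^{\gamma_1}LM+2M_1$, the $M_n$ pieces being propagated from $-d$; the current bounds $|j(k_n,d)|\le\tfrac4{\beta\eta^2}(7\pi(\tfrac5\eta)^{|n|-1}(c\eta)^{\gamma_1}LM+2M_n)$ then follow from the same Duhamel estimate as in Lemma~\ref{Rrest}. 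The main obstacle is the $u_1$ bootstrap: because the $w(k_0)$--$j(k_0)$ feedback genuinely allows $w(k_0)$ to grow when $\beta<\pi$, one must use the sharp growth factor $L$ and check that the hypothesis $c\le(8\pi)^{-4/3}\beta^{16/3}$ is exactly strong enough to render all $L$-weighted back-coupling terms sub-leading and recover the clean constants $3$ and $7\pi$; getting the $\min(\kappa_{k_0}\pi d^2,1)$ dichotomy and the numerical constants right is the remaining piece of careful bookkeeping.
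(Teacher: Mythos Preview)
Your proposal is correct and follows essentially the same approach as the paper: invoke Proposition~\ref{I1pro} for the data at $s=-d$, run a simultaneous bootstrap on $I_2$ using Lemma~\ref{L_lem} (the growth-factor bound from Appendix~\ref{LApp}) for $u_1$, integrate $2c\eta\tfrac{1}{1+s^2}u_1$ to get $u_2$, propagate $u_3$ and $w(k_n)$ through the $\tfrac{5}{\eta}$-decay in $|n|$, and close with the Duhamel estimates for the currents including the $\min(\kappa_{k_0}\pi d^2,1)$ dichotomy for $j(k_0)$. The only cosmetic difference is that the paper writes the bootstrap as one global ansatz for all modes at once rather than phrasing the outer modes as a nested induction in $|n|$, and its $u_1$ back-coupling constant comes out as $7\pi cL^2$ rather than your $56\pi cL^2$ (both are absorbed by the hypothesis $cL^2\ll 1$).
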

For interval $I_2$ we are mostly concerned with the interaction between $j (k_0)$ and $u_1$ and in particular the growth this induces for $u_2 $. Therefore, consider the ODE system 
\begin{align}
\begin{split}
    \partial_s u_1 &= -j(k_0)+F \\
    \partial_s j(k_0) &= \tfrac {\kappa_{k_0}} \beta u_1 +(\tfrac {2s} {1+s^2} - \kappa_{k_0} (1+s^2) )j(k_0),\label{PDEI2}
\end{split}
\end{align}
our aim is to bound the growth of $j  (k_0)$ and $u_1$ by a factor. Let $U(\tau,s)$ be the solution of \eqref{PDEI2} with initial data $u_1(\tau)=1$ and $j(\tau)=0 $ and $L$  as the constant which satisfies 
\begin{align}
    \vert U(\tau,s )\vert \le L=L (\beta, \kappa,k ). \label{UL}
\end{align}
With the restriction
\begin{align*}
    c&\le (8\pi)^{-\frac 4 3 } \beta^{\frac {16} 3 },
\end{align*}
$L$ is estimated by the following two cases, if $\beta\ge \pi$ we obtain $L=1$ and if $\beta <\pi $ we obtain a $L=L(\alpha ,\kappa, k)\le \sqrt c $. A proof and more specific bounds can be found in Appendix \ref{LApp} and for simplicity of presentation we here only consider two cases.

\begin{lemma}\label{L_lem}
Let $u_1$ be a solution of \eqref{PDEI2}  on $[-d,d]$such that \eqref{UL} holds, then we estimate  
\begin{align*}
    \tfrac 1 L \vert u_1 \vert &\le u_1(-d )+ \int_{-d}^s  \vert F(\tau)\vert  \ d\tau + \vert j(-d)\vert \int \tfrac {1+\tau^2}{1+d^2} \exp(-\kappa_k (\tau+d+\tfrac 1 3(s^3+d^3 )))  .
\end{align*}
\begin{proof}
We may without loss of generality restrict to the case $j(-d)=0$, since we can choose $\tilde F =F +  \tfrac {1+s^2}{1+d^2} \exp(-\kappa_k (\tau+d+\tfrac 1 3(s^3+d^3
)))j(-d) $. By Duhamel' principle the equation \eqref{PDEI2} is solved by 
\begin{align*}
    u_1(s) &= U(-d, s) u_1(-d)  + \int U(\tau,s ) F(\tau )
\end{align*}
which yields the desired bound. 
\end{proof}
\end{lemma}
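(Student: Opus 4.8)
The plan is to reduce the general case to the special case $j(-d)=0$ and then apply the variation-of-parameters representation of the solution together with the uniform bound \eqref{UL} on the propagator $U$.

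\emph{Reduction to $j(-d)=0$.} Write $b(s):=\tfrac{2s}{1+s^2}-\kappa_{k_0}(1+s^2)$ for the coefficient of $j(k_0)$ in the second line of \eqref{PDEI2}. Its integrating factor, normalised at $-d$, is
\[
\exp\!\Bigl(\int_{-d}^{s}b(\tau)\,d\tau\Bigr)=\frac{1+s^2}{1+d^2}\,\exp\!\bigl(-\kappa_{k_0}\bigl(s+d+\tfrac13(s^3+d^3)\bigr)\bigr)=:g_0(s),
\]
so that $g(s):=j(-d)\,g_0(s)$ solves $g'=b(s)\,g$ with $g(-d)=j(-d)$. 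Since $b$ enters linearly in the $j(k_0)$-equation, one checks directly that the pair $\bigl(u_1,\,j(k_0)-g\bigr)$ again solves \eqref{PDEI2}, now with vanishing current at $s=-d$ and with $F$ replaced by $\tilde F:=F-g$; indeed the $j$-equation is unchanged and $\partial_s u_1=-j(k_0)+F=-(j(k_0)-g)+(F-g)$. Because $|\tilde F(\tau)|\le|F(\tau)|+|j(-d)|\,g_0(\tau)$, proving the lemma for $\tilde F$ with zero initial current gives exactly the claimed estimate, the last summand being $|j(-d)|\int_{-d}^{s}g_0(\tau)\,d\tau$.

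\emph{Duhamel and conclusion.} Assume now $j(-d)=0$. System \eqref{PDEI2} is linear and the forcing acts only on the $u_1$-component, so variation of parameters gives
\[
u_1(s)=U(-d,s)\,u_1(-d)+\int_{-d}^{s}U(\tau,s)\,F(\tau)\,d\tau,
\]
where $U(\tau,s)$ is by definition the $u_1$-value at time $s$ of the solution of \eqref{PDEI2} launched from $(u_1,j(k_0))=(1,0)$ at time $\tau$, i.e.\ the $(1,1)$-entry of the fundamental solution operator of the homogeneous system. (For nonzero $j(-d)$ there would be an additional term $\Phi_{12}(s,-d)\,j(-d)$, precisely the contribution removed in the reduction step.) Inserting the pointwise bound $|U(\tau,s)|\le L$ from \eqref{UL} yields $|u_1(s)|\le L\bigl(|u_1(-d)|+\int_{-d}^{s}|F(\tau)|\,d\tau\bigr)$; since after the reduction $F$ stands for $\tilde F$, undoing the substitution gives $\tfrac1L|u_1(s)|\le|u_1(-d)|+\int_{-d}^{s}|F(\tau)|\,d\tau+|j(-d)|\int_{-d}^{s}g_0(\tau)\,d\tau$, which is the assertion.

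The argument is essentially bookkeeping: the only point requiring care is the explicit computation of the integrating factor $g_0$ and the verification that subtracting $g$ from $j(k_0)$ genuinely converts the inhomogeneous initial-value problem into one with zero initial current and a modified forcing controlled by $|F|+|j(-d)|g_0$. All of the analytic substance — the growth factor $L$ — is imported from \eqref{UL} and Appendix \ref{LApp}.
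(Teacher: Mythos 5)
Your proof is correct and follows essentially the same route as the paper: reduce to $j(-d)=0$ by absorbing the homogeneous evolution of the initial current (the integrating factor $g_0$) into the forcing, then apply Duhamel with the propagator bound $\vert U(\tau,s)\vert\le L$ from \eqref{UL}. You merely spell out the bookkeeping (the integrating-factor computation and the identification of $U$ with the $(1,1)$-entry of the fundamental matrix) that the paper's terse proof leaves implicit, and the sign of the correction term is immaterial since only $\vert\tilde F\vert\le\vert F\vert+\vert j(-d)\vert g_0$ enters the estimate.
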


\begin{proof}[Proof of Proposition \ref{I2_pro}]
With Proposition \ref{I1pro} we estimate until time $-d$
\begin{align*}
    \vert u_1  \vert (-d) &\le  2 M (c \eta)^{-\gamma_2 }\\
    \vert u_2  \vert (-d) &\le  2 M (c \eta)^{\gamma_1 },\\
    \vert u_3 \vert(-d) &\le 2  M_1, \\
    \vert w(k_n , -d)\vert &\le 2  M_n, \\
    \vert j\vert(k_0,-d) &\le \tfrac {c}\beta M(c\eta)^{-\gamma_2 }\min ( \kappa_{k_0}c^{-2}, 1 ) ,\\
    \vert j\vert(k_{\pm 1},-d)&\le \tfrac 4{\beta \eta^2 }  M,\\
    \vert j\vert(k_n,-d)&\le \tfrac 4{\beta \eta^2 }  M_n .
\end{align*} 
We next aim to prove by a bootstrap  that
\begin{align*}
    \vert u_1\vert 
    &\le 3L (c\eta)^{-\gamma_2} M, \\
    \vert u_2 \vert 
    &\le 7\pi  L ( c \eta )^{\gamma_1 }M,\\
    \vert u_3 \vert
    &\le 15\pi  L (\tfrac 5 \eta)^2  ( c \eta )^{\gamma_1 }M +  2 M_1,\\
    \vert w(k_n)\vert  
    &\le  7\pi  L (\tfrac 5 \eta)^{\vert n\vert-1 }  ( c \eta )^{\gamma_1 }M+2 M_n ,\\
    \int j(k_{\pm1}) 
    &\le  7\pi  L \tfrac {3d}{\beta \eta^2 } ( c \eta )^{\gamma_1 }M ,\\
    \int j(k_n) 
    &<  \tfrac 2 \beta \tfrac d{\eta^2 } ( 7\pi  L \tfrac 5 \eta ( c \eta )^{\gamma_1 }M +  2 M_n ).
\end{align*}
To estimate $u_1$ we use Lemma \ref{L_lem} to deduce 
\begin{align*}
    \vert u_1\vert &\le L\left( u_1(-d) +   \int a_1 u_2 +a_2 u_3 + \tfrac {1+s^2}{1+d^2} \exp(-\kappa_{k_0} (s-\tau+\tfrac 1 3 (s^3-\tau^3)))j(-d) \right) \\
    &\le 2L (c\eta)^{-\gamma_2} M + L \tfrac 1\eta (1+\eta^{-2} ) (7\pi  L ( c \eta )^{\gamma_1 }M +  2 M_1 )  \\
    &+ \tfrac L{1+d^2} \min ( \tfrac 1 {\kappa_{k_0}}, d^3 ) j(k_0,-d) \\
    &\le 2L (c\eta)^{-\gamma_2} M + L \tfrac 1\eta (1+\eta^{-2} )(7\pi  L ( c \eta )^{\gamma_1 }M   +  2 M_1 ) + L \tfrac c \beta (c\eta)^{-\gamma_2 }   M\\
    &< 3L (c\eta)^{-\gamma_2}M,
\end{align*}
where we used that $\tfrac {40} \eta M_1 \le \tfrac {1}{10}  M (c\eta)^{-\gamma_2 } $ since $\eta \ge \tfrac 1{10c} $ and that $ 7\pi  L c<\tfrac 1 2  $. We estimate $u_2$ by 
\begin{align*}
    \vert u_2 \vert &\le 2(c \eta)^{\gamma_1 }M +\int 2c\eta \tfrac 1 {1+s^2} u_1 + a(k_{\pm 2 }) w(k_{\pm 2 }) +  j(k_{\pm 1})  \\
    &\le 2 (c \eta)^{\gamma_1 }M +2\pi  c \eta \vert u_1\vert_{L^\infty_s} + \tfrac 4\eta \vert w(k_{\pm 2 }) \vert_{L^\infty_s} + \int j(k_{\pm 2 } ) \\
    &< 7\pi  L ( c \eta )^{\gamma_1 }M. 
\end{align*}
In order to control $u_3$, we integrate $\partial_s u_3 $ in time, which yields  
\begin{align*}
    \vert u_3 \vert &\le \vert u_3\vert (-d)  + \int  a(k_{\pm 2 }) w(k_{\pm 2 }) +  j(k_{\pm 1}) \\
    &\le 2 M_1 + \tfrac 5 \eta ( 7\pi  L \tfrac 4 \eta ( c \eta )^{\gamma_1 }M +  2 M_2 )+ 7\pi  L \tfrac {3d}{\beta \eta^2 } ( c \eta )^{\gamma_1 }M \\
    &< 8\pi  L (\tfrac 5 \eta)^2  ( c \eta )^{\gamma_1 }M +  2 M_1.
\end{align*}
For $w(k_n)$ we first consider $\vert n \vert \ge 3 $. By integrating $\partial_s w(k_n)$ we deduce 
\begin{align*}
    \vert w(k_n)\vert  &\le \vert w(k_n,-d)\vert +  \tfrac 2 \eta ( \vert w(k_{n+1})\vert_{L^\infty_s}  + \vert w(k_{n-1})\vert_{L^\infty_s}  )+\int j(k_n) \\
    &<  7\pi  L (\tfrac 5 \eta)^{\vert n\vert-1 }  ( c \eta )^{\gamma_1 }M+2 M_n.
\end{align*}
For the cases $n=\pm 2 $ we similarly conclude that 
\begin{align*}
    \vert w(k_{\pm2 })\vert  &\le\vert w(k_{\pm2 },-d)\vert+ \tfrac 2 \eta ( \vert u_2 \vert_{L^\infty_s} + \vert u_3 \vert_{L^\infty_s}   + \vert w(k_{\pm3 })\vert_{L^\infty_s}  )\\
    &<   7\pi  L \tfrac 5 \eta ( c \eta )^{\gamma_1 }M +  2 M_{\pm 2 } .
\end{align*}
For the estimates on the current $j$ we argue similarly as on the interval $I_1$ and introduce $\hat s$ as the shifted time coordinates. To estimate $j(k_{\pm1})$ we integrate $\partial_s j(k_{\pm 1})$ in time:
\begin{align*}
    j(k_{\pm1})&=2\exp( -\tfrac 12 \kappa \xi \eta (s+d))j(k_{\pm 1 } ,-d)  \\
    &+\tfrac {\kappa_{k_{\pm1}}} {\beta } \int \tfrac {1+\hat s^2} {1+\hat \tau^2} \exp(- \kappa_{k_{\pm1}}(\hat s-\hat \tau +\tfrac 1 3 (\hat s^3 -\hat \tau^3 )))(u_2\pm u_3 ).
\end{align*}
The impact of $j(k_{\pm 1})$ is bounded by  
\begin{align*}
    \int j(k_{\pm1}) &\le \tfrac {4} {\kappa \xi \eta } j(k_{\pm 1 }, -d)+ \tfrac {2 d}  {\beta \eta^2 } (\vert u_2 \vert_{L^\infty_s}  + \vert u_3 \vert_{L^\infty_s} )  \\
    &\le 7\pi  L \tfrac {3d}{\beta \eta^2 } ( c \eta )^{\gamma_1 }M
\end{align*}
and hence yields the estimate 
\begin{align*}
    j (k_{\pm1 }) &\le 2\exp(-\tfrac 1 2 d \kappa \xi \eta ) j(k_{\pm1 },-d )+ \tfrac 4 {\beta \eta^2  } (\vert u_3 \vert_{L^\infty_s} +\vert u_2 \vert_{L^\infty_s} ) \\
    &<\tfrac 4 { \beta \eta^2 } (7\pi  L ( c \eta )^{\gamma_1 }M +  2 M_1  ). 
\end{align*}
 By integrating we thus obtain the following estimate for $j(k_{n })$:
\begin{align*}
    j(k_{n})&=2\exp( -\kappa \xi \eta (s+d) )j(k_n,-d )  \\
    &+\tfrac {\kappa_{k_n}} {\beta }  \int \tfrac {1+\hat s^2} {1+\hat \tau^2} \exp(- \kappa_{k_n}(\hat s-\hat \tau +\tfrac 1 3 (\hat s^3 -\hat \tau^3 )))w(k_n) ,
\end{align*}
which leads to 
\begin{align*}
    \int j(k_n) &= \tfrac 1 {\kappa \xi \eta } j(k_n,-d)+\tfrac 1 \beta \tfrac d{\eta^2   } \vert w (k_n) \vert_{L^\infty_s} \\
    &\le  \tfrac 2 \beta \tfrac d{\eta^2 } ( 7\pi  L \tfrac 4 \eta ( c \eta )^{\gamma_1 }M +  2 M_n )
\end{align*}
and 
\begin{align*}
    j (k_n) &\le 2\exp(-\kappa \xi d \eta ) j(k_n,-d )+ \tfrac 4 {\beta \eta^2 } \vert w(k_n)\vert_{L^\infty_s} \\
    &<\tfrac 4 {\beta \eta^2 }  (7\pi  L (\tfrac 5 \eta)^{\vert n\vert-1 }  ( c \eta )^{\gamma_1 }M+2 M_n ).
\end{align*}
We estimate  $j(k_0)$ by integrating
\begin{align*}
    j(k_0) &=\tfrac {1+s^2}{1+d^2 } \exp( -\kappa_{k_0} (s+d+\tfrac 1 3 (s^3 +d^3 ))j(k_0,-d)\\
    &+\tfrac {\kappa_{k_0}}\beta  \int \tfrac {1+s^2}{1+\tau^2 } \exp( -\kappa_{k_0} (s-\tau+\tfrac 1 3 (s^3 -\tau ^3 ))) u_1(\tau ).
\end{align*}
The second term can be estimated by
\begin{align*}
    \tfrac {\kappa_{k_0}}\beta\int &\tfrac {1+s^2}{1+\tau^2 } \exp( -\kappa_{k_0} (s-\tau+\tfrac 1 3 (s^3 -\tau ^3 ))) u_1(\tau )\\
    &\le \tfrac {1}\beta \min ( \kappa_{k_0} \pi d^2 , 1) \vert u_1\vert_{L^\infty_s} 
\end{align*}
and thus 
\begin{align*}
    j(k_0,d) &=\exp( -\kappa_{k_0} (2d+\tfrac 2 3 d^3)j(k_0,-d)\\
    &+\tfrac {1}\beta \min ( \kappa_{k_0} \pi d^2 , 1) \vert u_1\vert_{L^\infty_s} \\
    &\le \exp( -\kappa_{k_0} (2d+\tfrac 2 3 d^3)\tfrac {c^2}\beta M(c\eta)^{-\gamma_2 }\min ( \kappa_{k_0}, 1 )\\
    &+\tfrac {3L M}\beta \min ( \kappa_{k_0} \pi d^2 , 1)  (c\eta)^{-\gamma_2}\\
    &< \tfrac {4L M}\beta \min ( \kappa_{k_0} \pi d^2 , 1)  (c\eta)^{-\gamma_2}.
\end{align*}
\end{proof}

\subsection{The Echo and Lower Bounds in the Interval $I_2 $} 
\label{EchoI2}
In this section we establish the echo mechanism on the interval $I_2$, i.e. our aim is to show that the mode $u_1 $ induces growth of the $u_2$ mode. For this echo mechanism we need the additional assumption
\begin{align}
    \kappa k_0^2\min(\beta,1) >\tfrac 1 { c }. \label{kapk_est}
\end{align}
As shown in Subsection \ref{sec:betainf}, this  is not only a technical assumption. When $k_0$ is too small, the $u_1$ term can become negative due to the action of $j$ and hence negate the growth of $u_2$ and we could even obtain $u_2(d)\approx 0$. We will use initial data of the form
\begin{align}
\begin{split}
    u_1(-d)&=1, \\
    u_2(-d) &\le 50  c^2\eta , \\
    \vert j\vert(k_0,-d) &\le  \tfrac {2 c^2}\beta  , \\
    \vert w\vert (k,-d),\vert u_3\vert(-d)   &\le 5 (c\eta)^{\gamma_2 }, \\
    \vert j \vert ( k,-d) &\le \tfrac {20}\eta  (c\eta)^{\gamma_2 } . \label{init_Echo_I2}
\end{split}
\end{align}
Which corresponds to the echoes on $I_1$ normalized in terms of $u(-d)$. We will prove that $u$ closely matches the following asymptotics:
\begin{align*}
    \tilde u_1 &= \exp( -\tfrac 1 \beta (\tan^{-1}(s)+\tan^{-1}(d))), \\
    \tilde u_2 &= u_2(-d)+  2c \eta \beta ( 1-\exp(-\tfrac 1 \beta (\tan^{-1}(s)+\tan^{-1}(d)))).
\end{align*}
\begin{pro} \label{echoI2}
Consider a solution of \eqref{Echo} with initial data \eqref{init_Echo_I2}, then the following estimates hold:
\begin{align}
\begin{split}
    \vert u_1(d)-\tilde u_1 (d)\vert &= 12\pi c,\\
    \vert u_2(d) - \tilde u_2 (d)\vert &\le 24\pi c^2 \eta ,\\
    w(k_n,d),u_3(d)&\le 6(c\eta)^{\gamma_2} ,\\
    j(k_n,d)&\le \tfrac {25} {\beta \eta^2 } (c\eta)^{\gamma_2 },\\
     j(k_0,d)&\le \tfrac 2 \beta .\label{I2Echo} 
\end{split}
\end{align}
\end{pro}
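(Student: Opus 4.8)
\subsection*{Proof proposal for Proposition \ref{echoI2}}

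The plan is to combine the uniform a priori bounds of Proposition \ref{I2_pro} with a sharpened analysis of the resonant pair $(u_1,j(k_0))$. Feeding the normalized data \eqref{init_Echo_I2} into Proposition \ref{I2_pro} (for which $M\lesssim1$ and $M_n\lesssim(c\eta)^{\gamma_2}$) already yields the estimates for $w(k_n,d)$, $u_3(d)$, $j(k_n,d)$ and $j(k_0,d)$ in \eqref{I2Echo}; in particular hypothesis \eqref{kapk_est} forces $\min(\kappa_{k_0}\pi d^2,1)=1$, which is exactly what makes the $j(k_0,d)$-bound independent of $\eta$. What remains is to show that $u_1,u_2$ track the explicit profiles $\tilde u_1,\tilde u_2$.

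For $u_1$ I would repeat the repeated-Duhamel computation used for the toy model (Lemma \ref{lem:toy} and the derivation of \eqref{wk_rel}): solve $\partial_s j(k_0)=\tfrac{\kappa_{k_0}}\beta u_1+(\tfrac{2s}{1+s^2}-\kappa_{k_0}(1+s^2))j(k_0)$ by Duhamel from $-d$, insert into $\partial_s u_1=-j(k_0)-a_1u_2+a_2u_3$, and exchange the order of integration in the resulting double integral. This produces the Volterra identity
\begin{align*}
  u_1(s)=1-\tfrac1\beta\int_{-d}^s\tfrac{u_1(\tau)}{1+\tau^2}\,d\tau+E(s),
\end{align*}
where $E=E_{\mathrm{hom}}+E_{\mathrm{diss}}+E_{\mathrm{force}}$ collects the homogeneous current contribution $\propto j(k_0,-d)$, the dissipative correction $\tfrac1\beta\int_{-d}^s\tfrac{u_1(\tau)}{1+\tau^2}e^{-\kappa_{k_0}(s-\tau+\frac13(s^3-\tau^3))}\,d\tau$, and $\int_{-d}^s(-a_1u_2+a_2u_3)\,d\tau$, respectively. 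Using \eqref{kapk_est} in the form $\kappa_{k_0}^{-1}\le c\min(\beta,1)$, together with $|j(k_0,-d)|\le2c^2/\beta$ and the $L^\infty(I_2)$ bounds $|u_1|\lesssim1$, $|u_2|\lesssim(c\eta)^{\gamma_1}$, $|u_3|\lesssim(c\eta)^{\gamma_2}$ from Proposition \ref{I2_pro} (and $d=1/c$, $\eta\ge 10/c$), each of these three pieces satisfies $\|E_\bullet\|_{L^\infty(I_2)}\lesssim c$. Since $\tilde u_1$ obeys the same identity with $E\equiv0$, the difference $v=u_1-\tilde u_1$ solves $v(s)=E(s)-\tfrac1\beta\int_{-d}^s\tfrac{v(\tau)}{1+\tau^2}\,d\tau$ with $v(-d)=0$, whose explicit solution via the integrating factor $e^{\frac1\beta\arctan s}$ is $v(s)=E(s)-\tfrac1\beta\int_{-d}^se^{-\frac1\beta(\arctan s-\arctan\tau)}\tfrac{E(\tau)}{1+\tau^2}\,d\tau$. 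Because the resolvent kernel integrates in $\tau$ to $1-e^{-\frac1\beta(\arctan s+\arctan d)}\le1$, one obtains $|v(s)|\le2\|E\|_{L^\infty(I_2)}$, and tracking constants gives $|u_1(d)-\tilde u_1(d)|\le12\pi c$. The essential point — and where the damping sign of $-\tfrac1{\beta(1+s^2)}$ is genuinely used — is that this inversion avoids the naive Gronwall factor $e^{\pi/\beta}$, which for $\beta\ge\tfrac15$ would be useless.

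With $u_1\approx\tilde u_1$ in hand, $u_2$ follows from one integration of $\partial_s u_2=2c\eta\tfrac1{1+s^2}u_1+(\text{forcing from }w(k_{\pm2}),\,j(k_{\pm1}))$, while the identity $\int_{-d}^s\tfrac{\tilde u_1}{1+\tau^2}\,d\tau=\beta(1-\tilde u_1(s))$ (from $\tilde u_1'=-\tfrac1{\beta(1+s^2)}\tilde u_1$) shows that $\tilde u_2$ is exactly the corresponding expression with $u_1$ replaced by $\tilde u_1$ and the forcing dropped. Hence $|u_2(d)-\tilde u_2(d)|\le2c\eta\int_{-d}^d\tfrac{|u_1-\tilde u_1|}{1+\tau^2}\,d\tau+\int_{-d}^d\big(|a(k_{\pm2})|\,|w(k_{\pm2})|+|j(k_{\pm1})|\big)\,d\tau$: the first term is $\le2\pi c\eta\cdot12\pi c\lesssim c^2\eta$, and using the Proposition \ref{I2_pro} bounds with $|a(k_{\pm2})|\le c/\eta$, $d=1/c$, $\beta\ge\tfrac15$ and $c^2\beta\eta^2\gg1$, the forcing term is also $\lesssim c^2\eta$, yielding $|u_2(d)-\tilde u_2(d)|\le24\pi c^2\eta$. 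I expect the main obstacle to be the bookkeeping that each of $E_{\mathrm{hom}},E_{\mathrm{diss}},E_{\mathrm{force}}$ is $O(c)$ uniformly on $I_2$ — this is precisely where the largeness of $\kappa_{k_0}$ from \eqref{kapk_est} (to kill the exponential terms) and the smallness of $c$ with $c\eta\ge10$ (to absorb the $(c\eta)^{\gamma_1}/\eta$-type factors coming from the neighbouring modes) are both indispensable — together with organizing the Volterra inversion so that the $e^{\pi/\beta}$ amplification never appears.
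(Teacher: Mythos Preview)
Your approach is correct and hits the same key point as the paper's proof: the effective dynamics of $u_1$ on $I_2$ is $\partial_s u_1 \approx -\tfrac{1}{\beta(1+s^2)}u_1$, and the damping sign lets one invert the Volterra equation without picking up an $e^{\pi/\beta}$ factor. Your iterated-Duhamel derivation of $u_1(s)=1-\tfrac1\beta\int_{-d}^s\tfrac{u_1}{1+\tau^2}\,d\tau+E(s)$ is the same computation as in Lemma~\ref{lem:toy} and \eqref{wk_rel}.

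The paper organizes this differently: it introduces the \emph{good unknown} $g(s)=(1+s^2)j(k_0)-\tfrac{u_1}{\beta}$, in terms of which the $u_1$-equation reads $\partial_s u_1=-\tfrac{1}{\beta(1+s^2)}u_1-a_1u_2+a_2u_3-\tfrac{g}{1+s^2}$, making the damping explicit at the differential level. One then estimates $g$ via its own equation (which carries the strong dissipation $-\kappa_{k_0}(1+s^2)g$) and runs a single self-contained bootstrap on $[-d,d]$ for all quantities simultaneously. Your $E_{\mathrm{diss}}$ is precisely the integrated contribution of $g$; the two viewpoints are equivalent, with the good-unknown formulation being somewhat cleaner for closing the bootstrap at the exact constants $12\pi c$ and $(2\pi+1)c\eta\cdot 12\pi c$, since it keeps $|u_2|_{L^\infty}$ coupled to $|u_1-\tilde u_1|$ rather than to the looser bound $7\pi c\eta$ from Proposition~\ref{I2_pro}.

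One small caveat: you invoke Proposition~\ref{I2_pro} as a black box for the a~priori $L^\infty(I_2)$ bounds and for the non-resonant modes, but that proposition is stated for data at $s_0$ (with $M,M_n$ defined there), whereas \eqref{init_Echo_I2} specifies data only at $-d$. In the intended echo scenario this is harmless --- the data at $-d$ arises from Proposition~\ref{I1pro} under \eqref{addI1}, so corresponding data at $s_0$ exist with $M\approx (c\eta)^{\gamma_2}$ (not $M\lesssim 1$, though the distinction is mild since $\gamma_2\approx 2c^2$) --- but as stated Proposition~\ref{echoI2} is a standalone result from $-d$. The paper avoids this issue by doing its own bootstrap \eqref{I2boot} directly on $[-d,d]$ rather than citing Proposition~\ref{I2_pro}; you could do the same at minimal cost.
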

In the following it is convenient to introduce the \emph{good unknown}: 
\begin{align*}
    g(s) &= (1+s^2)j-\tfrac {u_1}\beta, 
\end{align*}
In terms of $g$ our equations then read
\begin{align*}
    \partial_s u_1 &=-\tfrac 1 \beta \tfrac 1 {1+s^2} u_1  -a_1u_2 + a_2 u_3-\tfrac 1 {1+s^2} g
\end{align*}
and
\begin{align*}
    \partial_s g &= 2sj(k_0)+(1+s^2) \partial_s j(k_0) -\tfrac 1\beta \partial_s u_1 \\
    &= \tfrac {2s} {1+s^2} (g+\tfrac 1 \beta u_1 )\\
    &-\kappa_{k_0}  (1+s^2) g +\tfrac {2s}{1+s^2} (g+\tfrac 1 \beta u_1 )\\
    &+ \tfrac 1 {\beta^2} \tfrac 1 {1+s^2} u_1  +\tfrac 1 \beta a_1u_2 - \tfrac 1 \beta  a_2 u_3+\tfrac 1 \beta \tfrac 1 {1+s^2} g  \\
    &= (\tfrac {4s+\frac 1\beta  }{1+s^2}- \kappa_{k_0} (1+s^2) ) g  \\
    &+ \tfrac 1 \beta \tfrac {4s+\frac 1\beta }{1+s^2}u_1 +\tfrac 1 \beta a_1u_2 - \tfrac 1 \beta  a_2 u_3.
\end{align*}
Therefore, \eqref{Echo} can be equivalently expressed as
\begin{align}
\begin{split}
    \partial_s \left( 
    \begin{array}{c}
         u_1  \\
         u_2 \\
         u_3\\
         g 
    \end{array}
    \right) &= \left( 
    \begin{array}{cccc}
         -\tfrac 1 \beta \tfrac 1 {1+s^2} &-a_1  & a_2 &-\tfrac 1 {1+s^2}  \\
         2 c\eta\tfrac 1 {1+s^2} &0&0&0  \\
                  0&0&0&0\\
         \tfrac{1}\beta\tfrac {4s+\frac 1\beta }{1+s^2}  &\tfrac 1 \beta a_1&\tfrac 1 \beta a_2 &\tfrac {4s+\frac 1\beta}{1+s^2} -\kappa_k (1+s^2)
    \end{array}
    \right)\left( 
    \begin{array}{c}
         u_1  \\
         u_2 \\
         u_3\\
         g 
    \end{array}
    \right)\\
    &+ \left( 
    \begin{array}{c}
         0  \\
         a(k\pm2)w(k\pm 2) -j(k\pm 1) \\
         \pm a(k\pm2)w(k\pm 2) \mp j(k\pm 1)\\
         0
    \end{array}
    \right) .\label{Echo_g}
\end{split}
\end{align}
The homogeneous system with respect to \eqref{Echo_g}  is given by 
\begin{align}
    \begin{split}
        \partial_s \left( \begin{array}{c}
             \tilde u_1\\\tilde u_2 
        \end{array}\right)&= \left(\begin{array}{cc}
             -\tfrac1\beta \tfrac 1 {1+s^2}& 0 \\
             2c\eta \tfrac 1 {1+s^2} &0 
        \end{array}\right)\left( \begin{array}{c}
             \tilde u_1\\\tilde u_2 
        \end{array}\right)
    \end{split}\label{eq:I2_hom_ech}
\end{align}
with the explicit solution
\begin{align}
\begin{split}
    \tilde u_1 &= \exp(-\tfrac 1 \beta (\tan^{-1}(s)+\tan^{-1}(d)) ) u_1(-d)\\
    \tilde u_2 &= u_2(-d) + 2 c \eta \beta ( 1-\exp(-\tfrac 1 \beta (\tan^{-1}(s)+\tan^{-1}(d))))u_1(-d).\label{I2homlsg}
\end{split}
\end{align}
In the following, we prove that the solution of \eqref{Echo_g} can be treated as a perturbation of \eqref{I2homlsg}. Note that we can approximate
\begin{align*}
    \beta ( 1-\exp(-\tfrac 1 \beta (\tan^{-1}(s)+\tan^{-1}(d))))\approx \min(\beta , (\tan^{-1}(s)+\tan^{-1}(d))), 
\end{align*}
where ``$\approx$'' in this case corresponds to the explicit bounds  
\begin{align*}
    \tfrac 1 2 \min(\beta , \cdot)&\le \beta ( 1-\exp(-\tfrac 1 \beta \cdot )) \le \min(\beta , \cdot). 
\end{align*}

\begin{proof}[Proof of Proposition \ref{echoI2}]

We want to show by a bootstrap that
\begin{align}
\begin{split}
    \vert u_1-\tilde u_1 \vert &\le c_1=12\pi c \\
    \vert u_2-\tilde u_2 \vert &\le c_2 = (2\pi+1)  c \eta c_1  \\
    \vert u_3\vert, \vert w(k_n)\vert &\le 6(c\eta)^{\gamma_2 } \\
    \int j(k_n)
    &\le \tfrac {13d}{\beta \eta^2} (c\eta)^{\gamma_2 }\\
    \int j(s, k_{\pm1 } )&\le \tfrac {10\pi} {\beta \eta } .\label{I2boot} 
\end{split}
\end{align}
Let $s^\ast $ be the maximal time such that \eqref{I2boot} holds. We assume that $s^\ast \le d$ and show that this leads to a contradiction by improving \eqref{I2boot}. The estimates of $j(k_n)$ for $n\neq 0$ are done similarly as in Proposition \ref{I2_pro} and we hence omit them here. First, we estimate $g$:
\begin{align*}
    g_0(s)&=\tfrac {(1+s^2)^2}{(1+d^2)^2}\exp(\tfrac 1 \beta (\tan^{-1}(s)+\tan^{-1}(d))-\kappa_{k_0}(s+d+\tfrac 1 3 (s^3 +d^3 )))g (-d),\\
    g(s)-g_0(s) &=\tfrac 1 \beta  \int \tfrac {(1+s^2)^2}{(1+\tau^2)^2}\exp \left(\tfrac 1 \beta (\tan^{-1}(s)-\tan^{-1} (\tau))-\kappa_{k_0}(s-\tau+\tfrac 1 3 (s^3-\tau^3))\right) \\
    &\qquad \left( \tfrac {4\tau+\frac 1\beta  }{1+\tau ^2} u_1(\tau)+\tfrac 1 \beta  a_2 u_2(\tau) -\tfrac 1 \beta a_3 u_3(\tau) \right)\ d\tau.
\end{align*}
Next we estimate the size of the perturbations by
\begin{align*}
    \int d\tau_2 \  &\tfrac{\exp(-\tfrac 1 \beta (\tan^{-1}(s)-\tan^{-1}(\tau_2)))} {1+\tau_2^2}(g-g_0) (\tau_2)\\
    &=\tfrac 1 \beta \int d\tau_2  \int d\tau_1\  \exp\left(-\tfrac 1 \beta (\tan^{-1}(s)-\tan^{-1} (\tau_1))-\kappa_{k_0}(\tau_2-\tau_1+\tfrac 1 3 (\tau_2^3-\tau_1^3))\right)\\
    &\qquad \cdot \tfrac {1+\tau_2^2}{(1+\tau_1^2)^2} \left(\tfrac {4\tau_1+\frac 1\beta  }{1+\tau_1^2} u_1(\tau_1)+\tfrac 1 \beta  a_2 u_2(\tau_1) -\tfrac 1 \beta a_3 u_3(\tau_1)\right)\\
&\le \tfrac 2 {\beta\kappa_{k_0} }  \int d\tau_1 \    \exp(-\tfrac 1 \beta (\tan^{-1}(s)-\tan^{-1}(\tau_1))) \\
& \quad \cdot \tfrac 1 {(1+\tau_1^2)^2} \left(\tfrac {4\tau_1+\frac 1\beta  }{1+\tau^2_1 } u_1(\tau_1)+\tfrac 1 \beta  a_2 u_2(\tau_1) -\tfrac 1 \beta a_3 u_3(\tau_1) \right)\\
&\le\tfrac 2 {\kappa_{k_0} \beta  } (2\vert u_1\vert_{L^\infty_s} + \tfrac {4c}{\eta  }(\vert u_2\vert_{L^\infty_s} +\vert u_3\vert_{L^\infty_s} ) )
\end{align*}
and 
\begin{align*}
    \int &\tfrac{\exp(-\tfrac 1 \beta (\tan^{-1}(s)-\tan^{-1}(\tau_2)))} {1+\tau_2^2}g_0 (\tau_2) \ d\tau_2 \\
    &= c^4 \exp(-\tfrac 1 \beta (\tan^{-1}(s)+\tan^{-1}(d))) g (-d)\\
    &\qquad \cdot\int (1+\tau_2^2)\exp(-\kappa_{k_0}(\tau_2+d+\tfrac 1 3 (\tau_2 ^3 +d^3 )))g_0 (-d) \\
    &=\tfrac {c^4}{\kappa_{k_0}} \exp(-\tfrac 1 \beta (\tan^{-1}(s)+\tan^{-1}(d))) \tfrac 3 \beta g(-d) \le \tfrac 3\beta\tfrac {c^4}{\kappa_{k_0} },  
\end{align*}
where we used \eqref{init_Echo_I2} to obtain $g(-d) \le \tfrac 3 \beta $. To estimate $u_1$ we look at the difference to the homogeneous system,
\begin{align*}
    \partial_s (u_1-\tilde u_1 ) &= -\tfrac 1 \beta \tfrac 1 {1+s^2} (u_1-\tilde u_1 ) -a_1 u_2 +a_3 u_3 -\tfrac 1 {1+s^2} g 
\end{align*}
which leads after integrating to 
\begin{align*}
    \vert u_1 -\tilde u_1 \vert &\le \tfrac 4 \eta (\vert u_2 \vert_{L^\infty_s} +\vert u_3 \vert_{L^\infty_s}) + \tfrac 1 {\kappa_{k_0} } (\tfrac 2 \beta \vert u_1\vert_{L^\infty_s} + \tfrac {4c}{\eta \beta }(\vert u_2\vert_{L^\infty_s} +\vert u_3\vert_{L^\infty_s} )) + \tfrac 3 \beta \tfrac {c^4}{ \kappa_{k_0}} \\
    &\le 8 c  \min(\beta , \pi )+ 4(2\pi+1)  c  c_1  +\tfrac 4 {\kappa_{k_0}\beta }  (1+c_1)+\tfrac 5 \eta 6(c\eta)^{\gamma_2 }  <c_1
\end{align*}
since $\kappa k_0^2 \ge \tfrac 1{ \beta c } $. We estimate  $u_2-\tilde u_2 $ by 
\begin{align*}
    \partial_s (u_2-\tilde u_2 ) &= 2c\eta \tfrac 1 {1+s^2} (u_1-\tilde u_1) +a(k_{\pm2 }) w(k_{\pm 2}) -j(k_{\pm 1})
\end{align*}
which implies by integrating in $s$, that
\begin{align*}
    \vert u_2 -\tilde u_2 \vert &\le 2\pi c \eta c_1 +\tfrac 2 \eta \vert w(k_{\pm 2})\vert_{L^\infty_s} +\int j (k_{\pm 1})\\
    &\le 2\pi c \eta c_1 + (12+10\pi\tfrac 1 \beta ) c^{\gamma_2 }\eta^{-\gamma_1 }\\
    &< (2\pi+1)  c \eta c_1.
\end{align*}
Next we estimate $w(k_n)$ for $\vert n \vert \ge 3$. We remark that the estimates for  $u_3 $ and $w_{k_{\pm 2}}$ are similar and hence we omit them. By integrating over the derivative we deduce 
\begin{align*}
   w(k_n,-d)&\le  w(k_n,d) +\tfrac 2 \eta (\vert w(k_{n+1})\vert_{L^\infty_s} +\vert w(k_{n-1} )\vert_{L^\infty_s} ) +\int j(k_n) \\
   &< 6(c\eta)^{\gamma_2 }.
\end{align*}
So the bootstrap is concluded. It is left to estimate $j(k_0)$.  We write
\begin{align*}
    \partial_s j(k_0) &= \tfrac {\kappa_{k_0}}\beta u_1 + (\tfrac {2s}{1+s^2}-\kappa_{k_0} (1+s^2))j(k_0)\\
    &\le \tfrac {\kappa_{k_0}}\beta u_1 -\tfrac 8{9} \kappa_{k_0} j(k_0)
\end{align*}
where in the second line we used \eqref{kapk_est}. By integrating, we obtain 
\begin{align*}
    j(k_0,s) &\le \exp(-\tfrac 89 \kappa_{k_0}(s+d) )j(k_0,-d)\\
    &+ \tfrac {\kappa_{k_0}}\beta \int_{-d}^s \ d \tau  \exp(-\tfrac 89 \kappa_{k_0}(s-\tau ) )    u_1(\tau)
\end{align*}
which leads to 
\begin{align*}
     j(k_0,d) &\le \exp(-2 d \tfrac 89 \kappa_{k_0} )j(k_0,-d)\\
    &+ \tfrac 98  \tfrac {1}\beta \vert u_1\vert_{L^\infty_s} \\
    &\le \tfrac 2 \beta .
\end{align*}

\end{proof}

\subsection {Proof of Theorem \ref{Thm_I} }\label{I3}
 In Subsections \ref{sect:I13}, \ref{SecI2} and  \ref{EchoI2} we proved lower and upper bounds until the time $s=d$. Furthermore, in Subsection \ref{sect:I13} we already showed the asymptotic behavior on the interval $I_3$. In this subsection we need to combine the results of these subsections to obtain the final lower and upper bounds for the complete interval $I^k$. This will be achieved in two steps: first we conclude the bootstrap on $I_3$, afterwards we show that all terms result in the desired estimates.

\begin{proof}[Proof of Theorem \ref{Thm_I}]
Following we proceed similarly as in the proof of Proposition \ref{I1pro}, just for $I_3$. In particular we use the tools from Subsection \ref{sect:I13}. We thus need to prove the missing estimate on $[d,s_1] $. Let $r_i(d)$ be the initial data of $r(s)$. We define the $c_i$ terms by
\begin{align}
\begin{split}
    c_1 &= 2 (r_1(d)+(21c^2 + 2 \tfrac{c^3}\beta (c \eta )^{\gamma } ) r_2(s_0)+N+N_j) \\
    \tilde c_1&=0 \\
    c_2 &=  \tfrac 1 {1-2\frac c\beta } ( r_2(d) + N + N_j )  \\
    \tilde c_2 &=22 c_1  + \tfrac c\beta  \tilde c_2.\label{I3_c} 
\end{split}
\end{align}
and
\begin{align*}
   N&= 2c\tfrac 1 {\kappa \xi \eta^{\gamma_2 } }j(k_0\pm 1,d )+2c u_3 (d) \\
    &+2 \sum_{\vert m\vert \ge 2 } (2c)^{\vert m \vert } (w+\tfrac 4{\kappa \eta \xi }j)(k_m,d)\\
    N_j &= 4 (c\eta)^{\gamma_2}\tfrac {c^{2}}{\kappa_{k_0} }j(k_0,d).
\end{align*}

We prove by bootstrap that 
\begin{align}
    \vert u\vert (s) &\le S^\ast(s) C(s). \label{SCest}
\end{align}
Since $c_i\ge r_i(d)$ this estimate holds locally, and we again let $s^\ast$ be the maximal time such that  \eqref{SCest} holds. We assume that $s^\ast \le s_1 $ and improve the estimate, which gives a contradiction and thus proves that \eqref{SCest} holds on $[d,s_1]$. For the $R_i$ we obtain with the Lemmas \ref{R3mode}, \ref{Rpsi}, \ref{Rrest} that
\begin{align*}
     R_1[F_{all}  ]&= R_1[F_{3mode}]+R_1[F_{j}]+ R_1 [F_{\tilde w }]+R_1[F_{j(k_0\pm 1)}]+R_1[F_{u_3}]\\
     &\le 20 c^2c_1+20 c^4\tilde c_2  +(20c^2+c^4(c \eta)^{\gamma} )c_2\\
    &+\tfrac {c^3}\beta (c_1+\tilde c_2 )+  \tfrac {c^3}\beta  (c \eta)^{\gamma}  c_2 + (c\eta)^{\gamma_1 } \tfrac {4 c^{4 }}{\kappa_{k_0} }j(k_0,d) \\
     &+ 2c^2 (\tilde w(2)+\tilde w(-2))\\
     &+\tfrac {2c}{\kappa \xi \eta }  j(k_{\pm 1},  d )  +  \tfrac {2c} {\beta \kappa \xi  } (\tilde w(1) + c_1^\ast + c_2^\ast )\\
     &+2c\tilde w(1) \\
     &\le 21 c^2  c_1+ 2 \tfrac {c^3 }\beta \tilde c_2 + (21c^2+ \tfrac {c^3}\beta  (c \eta)^{\gamma})   c_2  +N+ c^2 (c\eta)^{\gamma }N_j,
\end{align*}
and 
\begin{align*}
    R_2[F_{all} ]&= R_2[F_{3mode}]+R_2[F_{j}]+ R_2[F_{\tilde w }]+ R_2[F_{j(k_0\pm 1)}]+R_2[F_{u_3}]\\
     &\le 20(\tfrac s \eta )^\gamma (c_1+ 2  c^2  \tilde c_2) + 20c^2c_2   \\
    &+  \tfrac c \beta (\tfrac {s}  \eta )^\gamma  (c_1+\tilde c_2 )  +\tfrac c \beta   c_2 + (c\eta)^{\gamma_2}\tfrac {c^{2}}{\kappa_{k_0} }j(k_0,d))\\
     &+ c^2 (\tilde w(2)+\tilde w(-2))(\tfrac {s}\eta)^{\gamma_1 } \\
     &+\tfrac {2c} {\kappa \xi \eta }  j(k_{\pm 1}, d )(\tfrac {s}\eta)^{\gamma_1 } + \tfrac c {\beta \kappa \xi  } (\tilde w(1) + c_1^\ast + c_2^\ast ) (\tfrac {s}\eta)^{\gamma_1 }   \\
     &+2c\tilde w(1)(\tfrac {s}\eta)^{\gamma_1 } \\
     &\le 21 (\tfrac s \eta)^\gamma c_1 + 2 \tfrac c \beta  c_2+ \tfrac c \beta (\tfrac s \eta )^\gamma \tilde c_2 +N (\tfrac s \eta)^\gamma + N_j.
\end{align*}

Therefore, we deduce that
\begin{align*}
    r_1(s_0) + R_1[all][1] &\le r_1(d)+21  c^2 ( c_1+\tilde c_2) + (21 c^2+ \tfrac {c^3}\beta  (c \eta)^{\gamma})   c_2  \\
    &\quad +N+ c^2 (c\eta)^\gamma N_j <c_1,\\
    r_2(s_0) + R_2[all][1] &\le r_2(d)   + 2\tfrac c \beta  c_2+ N+N_j <c_2,\\
    R_2[all][(\tfrac s \eta)^\gamma ] &<20 c_1  + 2c^2 \tilde c_2  <\tilde c_2.
\end{align*}

This concludes the bootstrap and we estimated $\vert u\vert(s)\le S^\ast(s)C(s)$ for $s\le s_1 $. To finish the proof of the theorem we need to establish the norm estimate at the final time. With Proposition \ref{I2_pro} we obtain the folloiwng bounds:
\begin{align*}
    \vert u_1\vert (d)
    &\le 3 (c\eta)^{-\gamma_2}LM,\\
    \vert u_2 \vert (d)
    &\le 7\pi  ( c \eta )^{\gamma_1 } LM,\\
    \vert u_3 \vert(d)
    &\le   7\pi   (\tfrac 5 \eta)^2  ( c \eta )^{\gamma_1 }LM +  2M_1,\\
    \vert w(k_n,d)\vert  
    &\le  7\pi  (\tfrac 5 \eta)^{\vert n\vert-1 }  ( c \eta )^{\gamma_1 }L M+2 M_n, \\
        j (k_n,d) 
    &\le \tfrac 4 { \eta^2 \beta } (7\pi (\tfrac 4 \eta)^{\vert n\vert-1 }  ( c \eta )^{\gamma_1 } L M+2 M_n ),\\
        j(k_0,d)
    &\le  \tfrac {4L M}\beta \min ( \kappa_{k_0} \pi d^2 , 1)  (c\eta)^{-\gamma_2}.
\end{align*}
This in turn yields
 \begin{align*}
     N&= 2c\tfrac 1 {\kappa \xi \eta^{\gamma_2 } }j(k_0\pm 1,d )+2c u_3 (d) \\
     &+2 \sum_{\vert m\vert \ge 2 } (2c)^{\vert m-k_0\vert } (w+\tfrac8{\kappa \eta \xi n^2 }j)(k_m,d) \\
     &\le c    (c\eta)^\gamma LM,   \\
     N_j &\le 4 (c\eta)^{\gamma_2}\tfrac {c^{2}}{\kappa_{k_0} }  \tfrac {4L M}\beta \min ( \kappa_{k_0} \pi d^2 , 1)  (c\eta)^{-\gamma_2}  \\
     &\le \tfrac {16}\beta \min(\pi, \tfrac {c^2} {\kappa_{k_0}} )LM .
 \end{align*}
 Using these bounds, we consider
Afterwards, we estimate 
\begin{align*}
    r(d) &= S^{-1}(d) u(d)\\\
    &= - 2c \gamma^{-1} \left( 
    \begin{array}{cc}
         -\tfrac {\gamma_2 }{2c }\vert c \eta\vert ^{-\gamma_2+1 }& -\vert c\eta \vert^{-\gamma_2 }\\
         \tfrac {\gamma_1 }{2c }  \vert c \eta\vert^{-\gamma_1+1 }&\vert c \eta\vert^{-\gamma_1 }
    \end{array}
    \right)u( d)\\
   \left( 
   \begin{array}{cc}
        \vert r_1 \vert \\
        \vert r_2 \vert 
   \end{array}
   \right)  (d)  &\le LM \left( 
   \begin{array}{cc}
        15\pi c (c\eta )^\gamma   \\
        4
   \end{array}
   \right) 
\end{align*}
and hence deduce that
\begin{align*}
    c_1 &=  (c\eta)^\gamma (30 \pi c  +30\tfrac {c^2} \beta +c ) LM\\
    &\le 31 \pi  c   (c\eta)^\gamma LM\\
    c_2 &= (5+ \tfrac {16 \pi}\beta )LM .
\end{align*}
This implies the estimate 
\begin{align*}
    u(s_1) &\le S^\ast(s_1) C(s_1)\\
    &\le LM\left( \begin{array}{cc}
         \tfrac 1 2 &1  \\
         \tfrac 1 {2c} & 2c  
    \end{array}\right) \left(\begin{array}{cc}
         31 \pi  c  (c\eta)^\gamma     \\
         (5+ \tfrac {16 \pi}\beta )
    \end{array}\right)\\
    &\le  L  M ( c \eta)^{\gamma}  \left( \begin{array}{c}
        16c + (5+ \tfrac {16 \pi}\beta ) (c\eta)^{-\gamma}   \\16\pi  
    \end{array}\right),
\end{align*}
where we used that $(c\eta)^{-\gamma} \tfrac 1 {\kappa_{k_0}}= (\tfrac {k_0^2}{c\xi})^\gamma \tfrac 1 {\kappa k_0^2} =\tfrac 1 {(c\xi )^\gamma \kappa k_0^{2\gamma_2 }}\ll \beta c$. For $\tilde w(n)$ we obtain  
\begin{align*}
    \tilde w(n)&= 2 \sum_{\vert m\vert \ge 2 } (2c)^{\vert m-n\vert+\chi} (w+    \tfrac4{\kappa \eta \xi }  j)(k_m, d)  \\
    &+ (2c)^{\vert \vert n\vert -2 \vert}c (c_1^\ast + \tfrac 1 {c^2}c_2^\ast ) \\
    &+( 2c)^{\vert c\vert -1 }u_3( d) \\
    &\le L (2c)^{\vert n\vert } M + M_n \\
    u_3 (n) &\le L  (2c)^{\vert n\vert+2  } M + M_1.
\end{align*}
Furthermore, by integrating over $\partial_s j(k_n)$ we obtain 
\begin{align*}
    j(k_n,s_1)&\le L \tfrac 5 {\kappa \xi \eta } ((2c)^{\vert n\vert } M + M_n),\\
    j(k_{\pm 1 } ,s_1)&\le L \tfrac 5 {\kappa \xi \eta } ((2c)^{\vert n\vert +2} M + M_1).
\end{align*}
In order to estimate $j(k_0) $ we use Lemma \ref{Rpsi}:
\begin{align*}
    \vert j(k_0, s_1 )\vert &\le L  c^2  \eta^2 \exp(-\kappa_{k_0}\eta^3) j (k_0 ,d)\\
    &+ 2 \tfrac {16^2}\beta\tfrac 1 {  \eta^2 }(c_1 +c_2 + \tilde c_1+  \tilde c_2 )\\
    &\le   3 \pi   \tfrac {\kappa_{k_0} } \beta\eta^2 \exp(-\kappa_{k_0}\eta^3)    (\tfrac d \eta)^{\gamma_2} M\\
    &+  L 4 \tfrac {16^2}\beta\tfrac 1 {  \eta^2 } 2M (c \eta)^{-\gamma_2} \\
    &\le \tfrac {2^{11}}\beta\tfrac 1 {  \eta^2 } M (c \eta)^{-\gamma_2}.
\end{align*}
We further estimate
\begin{align*}
    M^2&= \sum_{m,n\ge 1} 10^{-m-n}(w+\tfrac 1 {\alpha_{k_n}}j) (k_n)(w+\tfrac 1 {\alpha_{k_m }}j) (k_m)\\
    &\le \tfrac 2 {1-10^{-1}} \sum_{n\ge 1 } 10^{-n} (w^2+\tfrac 1 {\alpha_{k_n}^2}j^2) (k_n)\\
    &\le\tfrac 2 {1-10^{-1}} \tfrac 1 {\lambda_{k_0}} \sum_{n\ge 1 } (10^{-n}\tfrac {\lambda_{k_0}}{\lambda_{k_n}})\lambda_{k_n} (w^2+\tfrac 1 {\alpha_{k_n}^2}j^2) (k_n)\\ 
    &\le \tfrac 2 {1-10^{-1}}\tfrac 1 {\lambda_{k_0}} \Vert w, j\Vert_X (s_0)^2\\
     \sum_{\vert n\vert \ge 1 }\lambda_{ {k_n}} M_n^2 &=\sum_n \lambda_{k_n} \sum_{m,l}10^{-\vert m-l\vert -\vert l-n \vert - \chi_l -\chi_m } (w+\tfrac 1 {\alpha_{k_m }}j)^2(k_m)\\
     &\le \tfrac 2 {1-10^{-1}} \sum_n \lambda_{k_n}\sum_m 10^{-\vert m-n \vert - \chi_m }(w^2+\tfrac 1 {\alpha_{k_m }^2}j^2)(k_m) \\
     &\le  \tfrac 2 {1-10^{-1}} \sum_m \lambda_{k_m}(w+\tfrac 1 {\alpha_{k_m }}j)^2(k_m) \sum_n 10^{-\vert m-n\vert-\chi_m } \tfrac {\lambda_{k_n}}{\lambda_{k_m}}\\
     &\le \tfrac {2\hat \lambda^2} {1-10^{-1}}\Vert w,j \Vert_X^2.
\end{align*}
Combining these bounds we infer the norm estimate
\begin{align*}
    \Vert w,j\Vert_X^2 (s_1)&\le 16 \pi L^2 M^2 (c \eta )^{2\gamma} (\lambda_{k_{\pm 1}   }   +\lambda_{k_0}(16\pi +5(c\eta)^{-2\gamma})^2  )   \\
    &+\sum_{\vert n\vert \ge 1 }L^2\lambda_{ {k_n}}(10^{-\vert n\vert } M  + M_n )^2\\
    &= M^2(c\eta )^{-2\gamma} (\lambda_{k_{\pm 1}^2   } (2c)^2  +\lambda_{k_0}^2 +2\sum_{\vert n\vert \ge 1 }\lambda_{ {k_n}}^2 10^{- 2 \vert n\vert }+L^2\sum_{\vert n\vert \ge 1 }\lambda_{ {k_n}}  M_n^2\\
    &\le L^2( \hat \lambda (16\pi)^2+ 3\hat \lambda^2 )  (c \eta )^{2\gamma}   \Vert w,j\Vert_X^2 (s_0).   
\end{align*}
This finally allows us to complete the proof of the upper bound and obtain that
\begin{align*}
    \Vert w,j\Vert_X (s_1)&\le 18\pi L \hat \lambda (c\eta)^\gamma  \Vert w,j\Vert_X (s_0).
\end{align*}
To prove the lower bound we use Proposition \ref{I1pro} and Proposition \ref{echoI2} and obtain that at time $s=d$ it holds that 
\begin{align*}
    \vert u_1(d)-\exp(-\tfrac \pi \beta ) (c\eta)^{\gamma_2} \vert &=O(c) \\
    \vert u_2(d) -2\beta(1-\exp(-\tfrac \pi \beta ) )(c\eta )^{\gamma_1 } \vert &\le O(c) \\
    w(k_n,d),u_3(d)&\le 6\\
    j(k_n,d)&\le \tfrac {10\pi}{\beta \eta } \tfrac 1 \eta \\
     j(k_0,d)&\le \tfrac 2 \beta.
\end{align*}
We calculate $\tilde u_2 $ by 
\begin{align*}
    \tilde u_2(s_1) &= (0 \ 1 ) S(s_1) S^{-1}(d) u(d)\\
    &\approx  (\tfrac{1}{2c}  \ 2c)
 2c  \left( 
    \begin{array}{cc}
         -c(c\eta) ^{\gamma_1}& -(c\eta )^{-\gamma_2 }\\
         \tfrac {1}{2c } (c\eta)^{\gamma_2}&(c\eta)^{- \gamma_1 }
    \end{array}
    \right) u (d) \\
    &\approx (-c(c\eta)^{\gamma_1 }+2c(c\eta)^{\gamma_2 }) u_1(d) + (-(c\eta )^{-\gamma_2 }+4c^2(c\eta) ^{-\gamma_1 } )u_2(d) \\
    &\approx c(c\eta)^{\gamma_1}u_1(d) + (c\eta)^{\gamma_2} u_2(d)\\
    &\approx2(c\eta)^{\gamma_1 } \beta (1-\exp(-\tfrac \pi \beta )) u_1(-d) \\
    &\approx2(c\eta)^{\gamma } \beta (1-\exp(-\tfrac \pi \beta )) u_1(s_0).
\end{align*}
The difference $u_2-\tilde u_2$ is estimated by 
\begin{align*}
    \vert u_2-\tilde u_2 \vert &\le (0 \ 1 ) S^\ast(s_1) R[F]\\
    &\le \tfrac 1 {2c} R_1[F] +2c R_2[F]\\
    &\le(c\eta)^{\gamma_2 } u_2(d) +  O(c)\\
    &=   2(c\eta)^{\gamma } \beta (1-\exp(-\tfrac \pi \beta )) u_1(s_0) +    O(c).
\end{align*}
Furthermore, we obtain 
\begin{align*}
    M&\le \tfrac 1 {1-10^{-1} } u_1( \tilde s_0)\\
    M_n&\le \tfrac 4 {1-10^{-1} } u_1( \tilde s_0).
\end{align*}
So we finally obtain since $\beta\ge \tfrac 1 5 $
\begin{align*}
    w(k_{-1}, t_{k_{-1} } ) &\approx2(c\eta)^{\gamma } \beta (1-\exp(-\tfrac \pi \beta )) u_1(s_0) \\
    &\ge \tfrac 1 2 \max_l (w(k_l,t_{k_l}),
\end{align*}
which gives 
\begin{align*}
    w(k_{-1}, t_{k_{-1} } ) & \ge
     (c\eta)^{\gamma } \min(\beta , \pi) w(k_{-1}, t_{k_{-1} } ).
\end{align*}

\end{proof}

In this article we have studied the asymptotic (in)stability of the magnetohydrodynamic equations with a shear, a constant magnetic field and magnetic dissipation. Here multiple effects compete to determine the long time behavior of solutions:
\begin{itemize}
    \item Echoes in the inviscid fluid equations may lead to large norm inflation.
    \item The underlying magnetic field leads to an exchange between kinetic and magnetic energy. In particular, for large magnetic fields oscillation my diminish norm inflation.
    \item Magnetic dissipation may stabilize the flow. Hence, a priori, it is not clear whether stability requires Gevrey regularity (as for the Euler equations) or Sobolev regularity (as for the fully dissipative problem) and how the evolution depends on the size of the magnetic field $\alpha$ and on the resistivity $\kappa$.
\end{itemize}
As the main result of this article we show that the balance between these effects is parametrized by the parameter $\beta=\tfrac{\kappa}{\alpha^2}>0$ and that the behavior for finite, positive $\beta$ strongly differs from both the fully non-dissipative case and the large dissipation limit (which reduces to the Euler equations). In particular, we show that in this regime the magnetic dissipation is not strong enough to stabilize the evolution in Sobolev regularity and establish Gevrey regularity as optimal both in terms of upper and lower bounds.
It remains an interesting problem for future research to determine the optimal stability classes for other partial dissipation regimes and to study the inviscid limit $\kappa \downarrow 0$.

\appendix 
\section{Estimating the Growth Factor} \label{LApp}
In Section \ref{SecI2} we observe the evolution of \eqref{Echo} on the interval $I_2=[-d,d]$. Here we observe the interaction between $ j $ and $u_1$ 
\begin{align}
\begin{split}
    \partial_s u_1 &= - j\\
    \partial_s j &= \tfrac K \beta u_1 +(\tfrac {2s} {1+s^2} - K (1+s^2) )j,\label{PDEI2b}
\end{split}
\end{align}
with $\kappa_k$ replaced by $K$ for simplicity
In particular we  bound the growth of  $u_1$ by a factor.  Let $U(\tau,s)$ be the solution of \eqref{PDEI2b} with initial data $u_1(\tau)=1$ and $ j(\tau)=0 $. We show that
\begin{itemize}
    \item $\vert U(\tau,s)\vert \le 1 $ for $\beta \ge \tfrac \pi 2 $ 
    \item $\vert U(\tau,s )\vert \le L=L (\beta, K)  $ for $\beta <\tfrac \pi 2 $.
\end{itemize}

With the restriction
\begin{align}
    c&\le (8\pi)^{\frac 4 3 } \beta^{\frac {16} 3 }. \label{cb}
\end{align}
we obtain 
\begin{align}
    L(\beta,K ) &= \left\{
    \begin{array}{cc}
         1 & 1\le K  \\
         \sqrt d &\tfrac 1  2 c^{\frac 34}\le K \le 1  \\
         2(1+\tfrac \pi \beta ) & \tfrac {2 \pi} \beta c^3\le K \le \tfrac 1  2 c^{\frac 34} \\
         1 & K\le \tfrac {2 \pi} \beta c^3
    \end{array}
    \right. \label{Lest} 
\end{align}
We note that \eqref{cb} is not optimal, in the sense that Section \ref{SecI2} we need $Lc<<1$ and we could optimize the $\tfrac 12 c^{\frac 34}$ term to obtain a larger $L$ but better \eqref{cb}. However, this would yield a lot dependencies which would make the final theorem more technical to state. The most important part of this estimates is to verify that $\beta$ can be very small if $c $ is chosen small enough. First we do an energy estimate, let
\begin{align*}
    E&= u_1^2 + \tfrac \beta K  j 
\end{align*}
which leads to 
\begin{align*}
    \tfrac 1 2 \partial_s E &\le (\tfrac {2s}{1+s^2} - K(1+s^2) )_+  E.  
\end{align*}
Therefore, we obtain for $K\ge 1 $ that $\partial_s E\le 0$, which proves our first estimate. Furthermore, we infer for $K\le 1$
\begin{align*}
    E(s) &\le E( \tau )\left\{ \begin{array}{cc}
         1 & s\le 0  \\
         (1+s^2)^2 & 0\le s \le (\tfrac K2 )^{\frac -1 3  } \\
         4(K)^{-\frac 4 3 } &(\tfrac K2)^{-\frac 1 3  } \le s,
    \end{array} \right. 
\end{align*}
We conclude 
\begin{align*}
       u_1(s) &\le \left\{ \begin{array}{cc}
         1 & s\le 0  \\
         1+s^2 & 0\le s \le (\tfrac K2)^{\frac 1 3  } \\
         2(K) ^{-\frac 2 3 } &(\tfrac K 2 )^{\frac 1 3  } \le s 
    \end{array} \right.  
\end{align*}
which proves \eqref{Lest}  for $\tfrac 1  2 c^{\frac 34}\le K \le 1 $. For small $K$ we need to make a different ansatz. We write $ j$ as, 
\begin{align*}
         j(s) &= \tfrac K \beta \int_{-d}^s \tfrac {1+s^2}{1+\tau^2} \exp(-K (s-\tau+\tfrac 1 3 ( s^3 -\tau^3)))u(\tau) \ d\tau
\end{align*}
    and so
\begin{align*}
        u(s)-1&= - \tfrac K \beta \iint_{-d\le \tau_1\le \tau_2 \le s }d(\tau_1,\tau_2 ) \tfrac {1+\tau_2^2}{1+\tau_1 ^2} \exp(-K (\tau_2 -\tau_1+\tfrac 1 3 ( \tau_2^3 -\tau_1 ^3)))u(\tau_1 ) \\
        &=  - \tfrac 1 \beta \int_{-d\le \tau_1\le s }d\tau_1 \ u(\tau_1 )  \tfrac {1}{1+\tau_1 ^2} [\exp(-K (\tau_2 -\tau_1+\tfrac 1 3 ( \tau_2^3 -\tau_1 ^3)))]_{\tau_2=\tau_1}^{\tau_2=s}\\
        &=-  \tfrac 1 \beta \int_{-d\le \tau_1\le s }d\tau_1 \ u(\tau_1 )  \tfrac {1}{1+\tau_1 ^2} (1-\exp(-K (s -\tau_1+\tfrac 1 3 ( s^3 -\tau_1 ^3)))).
\end{align*}
    Now we exploit that $u$ is decreasing till the smallest time such that $u(s)=0$. This holds, since if $u $ is positive, then $ j$ is positive and so $\partial_s u =- j\le 0 $.  Therefore, we bound 
\begin{align*}
            \tfrac 1 \beta \int_{-d\le \tau_1\le s }d\tau_1 \ u(\tau_1 )  \tfrac {1}{1+\tau_1 ^2} (1-\exp(-K (s -\tau_1+\tfrac 1 3 ( s^3 -\tau_1 ^3))))
\end{align*}
        by $1$ to deduce $0\le u(s)\le 1 $. Let $s$ be positive, then we estimate 
\begin{align*}
    \tfrac 1 \beta  \int_{-d\le \tau_1\le s }d\tau_1   &\tfrac {1}{1+\tau_1 ^2} (1-\exp(-K (s -\tau_1+\tfrac 1 3 ( s^3 -\tau_1 ^3))))\\
    &=\tfrac 1 \beta  \int_{-d\le \tau_1\le -s }d\tau_1   \tfrac {1}{1+\tau_1 ^2} (1-\exp(-K (s -\tau_1+\tfrac 1 3 ( s^3 -\tau_1 ^3))))\\
    &+\tfrac 1 \beta  \int_{-s\le \tau_1\le s }d\tau_1   \tfrac {1}{1+\tau_1 ^2} (1-\exp(-K (s -\tau_1+\tfrac 1 3 ( s^3 -\tau_1 ^3))))\\
    &\le\tfrac 1 {\beta s }+ \tfrac \pi \beta ( 1-\exp(-K (2s+\tfrac 23s^3 ))) \\
        &\le \tfrac 1 {\beta s } + \tfrac \pi \beta K (2s+\tfrac 23s^3 )\\
        &\le \tfrac 1 {\beta s }+ \tfrac \pi \beta K s^3 . 
\end{align*}
This term is less than zero if $\tfrac 2 \beta \le s \le (\tfrac \beta {2\pi K })^{\frac 13 } $. We choose $s=\min( (\tfrac \beta {2\pi K })^{\frac 13 },d ) $ maximal. When $s=d$, then  $(\tfrac \beta {2\pi K })^{\frac 13 }\ge d $ which is satisfied if $K \le \tfrac {2\pi}\beta c^3$ and so we obtain the last estimate of \eqref{Lest}. Now we need to prove the case if  $\tfrac {2 \pi} \beta c^3 \le K \le \tfrac 1  2 c^{\frac 34}$, with the previous calculation we obtain for  $s_1 =(\tfrac \beta {2\pi K })^{\frac 13 }$, that $0\le u(s_1)\le 1 $. Then for $s\ge s_1 $ we have 
    
\begin{align*}
        u(s)-1  &= \tfrac 1 \beta \int_{-d\le \tau_1\le s }d\tau \ u(\tau )  \tfrac {1}{1+\tau^2} (1-\exp(-K (s -\tau+\tfrac 1 3 ( s^3 -\tau^3))))\\
        \vert u(s)-1 \vert &\le \tfrac 1 \beta \int_{-d\le \tau\le s_1 }d\tau \ \tfrac 1 {1+\tau^2}  + \tfrac 1 \beta \int_{t_1\le \tau_1\le s }d\tau \ u(\tau )  \tfrac {1}{1+\tau^2}\\
        &\le \tfrac {\pi  }{\beta }+\tfrac 1 {\beta s_1 }  \vert u\vert_{L^\infty_s}.
\end{align*}
Due to $K \le \tfrac 1 2 c^{\frac 4 3 } $ and \eqref{cb} we obtain $s_1 \beta = (\tfrac {\beta^4 }{2\pi K })^{\frac 1 3 }\ge 2  $  and so 
\begin{align*}
    \vert u (s)\vert &\le \tfrac 1 {1-\frac 1 {\beta s_1}} (1+\tfrac \pi \beta )     \\
    &\le 2(1+\tfrac \pi \beta ) .
\end{align*}

\section{Nonlinear Instability of Waves}
\label{sec:norminflation} 
In this appendix we consider the nonlinear instability of the traveling waves.
\begin{align}
\begin{split}\label{NL_eq}
    \partial_t w+ (v\nabla w)_{\neq}  &=  \alpha \partial_x j +(b\nabla j)_{\neq}  -(2c \sin(x)\partial_y\Delta_t^{-1} w)_{\neq} \\
    \partial_t j+ (v\nabla j)_{\neq} &=\kappa \Delta_t j   +\alpha \partial_x w+(b\nabla w)_{\neq} - 2 \partial_x \partial_y^t \Delta^{-1}_t j-(2(\partial_i v\nabla )\partial_i \Delta^{-1}j)_{\neq} ,
\end{split}
\end{align}

For brevity of notation let us denote the Gevrey $2$ norm with constant $C$ by 
\begin{align*}
    \|(w, j)\|_{\mathcal{G}_C}^2 = \int \sum_k \exp(C\sqrt{|\xi|}) |\mathcal{F}(w, j)|^2 d\xi.
\end{align*}
Then the norm inflation result of Theorem \ref{thm:main} further implies the nonlinear instability of any non-trivial traveling wave for $C$ sufficiently small.
\begin{cor}
  \label{cor:nl}
  Let $0<c<\min( 10^{-4},10^{-3}\tfrac \kappa{\alpha^2}) $ be given and consider a traveling wave as in Lemma \ref{lemma:waves} and let $0<C_2<C_*$ where $C_*=C_*(c)$ is as in Theorem \ref{thm:main}.
  Then the nonlinear evolution equations around the traveling wave are unstable for small initial data in
  $\mathcal{G}_{C_2}$ in the sense that for any $0<C_1<C_2$, $\epsilon>0$ and $N>1$ there
  exists initial data with
  \begin{align*}
    \|(w_0, j_0)\|_{\mathcal{G}_{C_2}}< \epsilon
  \end{align*}
  but such that for some time $T>0$ it holds that
  \begin{align*}
    \|(w, j)|_{t=T}\|_{\mathcal{G}_{C_2}}\geq N  \|(w_0, j_0)\|_{\mathcal{G}_{C_1}}.
  \end{align*}
\end{cor}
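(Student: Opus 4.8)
The plan is to upgrade the linear norm inflation of Theorem~\ref{thm:main} to nonlinear instability by a perturbative bootstrap, carefully tracking the exponential-in-$\sqrt{|\xi|}$ constants. Writing the perturbation $U=(w,j)$ of the traveling wave, the system \eqref{NL_eq} has the form $\partial_t U = \mathcal{L}_t U + \mathcal{N}(U)$, where $\mathcal{L}_t$ is \emph{exactly} the (time-dependent) linear operator of \eqref{eq:linsys} --- up to the difference between the genuine wave coefficients $f(t)/(1+t^2),\,g(t)/(1+t^2)$ and the simplified $2c/(1+t^2),\,0$, which by Lemma~\ref{lemma:ode} is integrable in time and arbitrarily small for a suitable wave, hence can be absorbed into $\mathcal{N}$ (or sidestepped by starting the evolution at a late time) --- and $\mathcal{N}$ collects the genuinely quadratic terms $(v\nabla w)_{\neq},\,(b\nabla j)_{\neq},\,(b\nabla w)_{\neq},\,(2(\partial_i v\nabla)\partial_i\Delta^{-1}j)_{\neq}$. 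Since the elliptic factors $\Delta_t^{-1}$ are smoothing and the Gevrey weight $\exp(a\sqrt{|\xi|})$ dominates any polynomial, $\mathcal{N}$ obeys a product estimate $\|\mathcal{N}(U)\|_{\mathcal{G}_a}\lesssim \|U\|_{\mathcal{G}_{a'}}^2$ for any $a<a'$, at the price of a fixed, arbitrarily small loss $a'-a$ of Gevrey constant.

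For the linear input, fix $C_*$ as in Theorem~\ref{thm:main} and the constants $C_1<C_2<C_*$. By Theorem~\ref{theorem:final} the linear propagator $S_{0,t}$ of \eqref{eq:linsys} satisfies, uniformly in $t\ge 0$, a bound $\|S_{0,t}\|_{\mathcal{G}_a\to\mathcal{G}_{a-C_*}}\le \tilde C_0$ for every $a>C_*$; by Theorem~\ref{TechThm}, for each large $\xi_n$ there is a datum $g_n$ supported in $\{|\xi-\xi_n|\le 1\}$ at fluid frequency $k_0\approx\tfrac{c}{10}\sqrt{\xi_n}$ with $\|g_n\|_{L^2}=1$ whose linear evolution satisfies $\|S_{0,T_n}g_n\|_{L^2}\ge \exp(\tilde C\sqrt{\xi_n})$ at an echo time $T_n\in[t_{k_1}-1,t_{k_1}+1]$ with $\tilde C>0$. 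Because \eqref{eq:linsys} decouples in $\xi$, the support of $S_{0,t}g_n$ in $\xi$ stays in $\{|\xi-\xi_n|\le 1\}$, so for every $a$ one has $\|g_n\|_{\mathcal{G}_a}\approx \exp(\tfrac a2\sqrt{\xi_n})$ and $\|S_{0,T_n}g_n\|_{\mathcal{G}_a}\gtrsim \exp((\tfrac a2+\tilde C)\sqrt{\xi_n})$.

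Now fix $N,\epsilon>0$, let $n$ be large (to be constrained), and take initial data $U_0=a\,g_n$ with amplitude $a>0$ to be chosen small; then $\|U_0\|_{\mathcal{G}_{C_2}}\approx a\exp(\tfrac{C_2}{2}\sqrt{\xi_n})$, which is $<\epsilon$ once $a$ is small enough. Split the nonlinear solution as $U_{\mathrm{nl}}=a\,S_{0,t}g_n+R$, $R(0)=0$, so that $\partial_t R=\mathcal{L}_t R+\mathcal{N}(a\,S_{0,t}g_n+R)$. Applying Duhamel, the uniform bound on $S_{0,t}$ (measuring the forcing in a Gevrey class a fixed amount above $C_2$, where the single-frequency datum and its quadratic self-interactions --- which spread the $\xi$-support only to $\{m\xi_n\}$ and multiply the weight by a bounded power of $\exp(\sqrt{\xi_n})$ --- still lie) and the product estimate yield, on $[0,T_n]$, a bound $\|R(t)\|_{\mathcal{G}_{C_2}}\lesssim a^2\exp(K\sqrt{\xi_n})$ for a constant $K=K(C_1,C_2,C_*)$, valid as long as $\|R(t)\|_{\mathcal{G}_{C_2}}\le a\|S_{0,t}g_n\|_{\mathcal{G}_{C_2}}$; this bootstrap closes once $a\exp(K'\sqrt{\xi_n})\ll 1$. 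Hence, for $a$ small depending on $n$, $\|R(T_n)\|_{\mathcal{G}_{C_2}}\le\tfrac12\,a\|S_{0,T_n}g_n\|_{\mathcal{G}_{C_2}}$, and therefore
\[
\|U_{\mathrm{nl}}(T_n)\|_{\mathcal{G}_{C_2}}\ \ge\ \tfrac12\,a\|S_{0,T_n}g_n\|_{\mathcal{G}_{C_2}}\ \gtrsim\ a\exp\big((\tfrac{C_2}{2}+\tilde C)\sqrt{\xi_n}\big),
\]
while $\|U_0\|_{\mathcal{G}_{C_1}}\approx a\exp(\tfrac{C_1}{2}\sqrt{\xi_n})$, so the ratio is $\gtrsim \exp\big((\tfrac{C_2-C_1}{2}+\tilde C)\sqrt{\xi_n}\big)$. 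This exceeds $N$ for $n$ large, which fixes $n$ (large enough to also absorb all the Gevrey losses incurred above), and then $a$ is chosen small enough depending on $n$, completing the argument with $T=T_n$.

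The main obstacle is the bootstrap of the third step: the linear propagator $S_{0,t}$ itself grows --- that is the content of Theorem~\ref{thm:main} --- so $R$ cannot be controlled by a naive Gr\"onwall estimate; the point is to run the estimate entirely inside the uniform-in-time Gevrey-$2$ stability bound of Theorem~\ref{theorem:final}, absorbing both the derivative and the frequency doubling of $\mathcal{N}$ into a fixed loss of Gevrey constant, and to check that the single free parameter $a$ can be tuned so that $R$ stays subdominant on the whole interval $[0,T_n]$ --- whose length $T_n\approx\xi_n$ itself grows with the frequency --- while $\|U_0\|_{\mathcal{G}_{C_2}}$ stays below $\epsilon$; it is precisely because the linear growth rate $\exp(\tilde C\sqrt{\xi_n})$ is exponential in $\sqrt{\xi_n}$ that such a tuning exists. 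Replacing the simplified coefficients of \eqref{eq:linsys} by the true traveling-wave coefficients is a minor point handled by Lemma~\ref{lemma:ode}.
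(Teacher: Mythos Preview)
Your approach and the paper's share the same skeleton---Duhamel for the remainder, linear norm inflation from Theorem~\ref{thm:main} for the main term---but the executions differ in an instructive way.

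The paper takes a much simpler route. It argues by contradiction (equivalently, by dichotomy): assume the nonlinear solution stays bounded by $D\epsilon$ in $\mathcal{G}_{C_1}$ for all time; this immediately gives $\|F(\tau)\|_{L^2}\lesssim D^2\epsilon^2$ by a single Gevrey-to-$L^2$ product estimate (the weight absorbs all derivatives). The Duhamel integral is then estimated entirely in $L^2$, using only an $L^2\to L^2$ bound on $S(T,\tau)$ that is \emph{allowed to depend on $T$}: at each fixed $\xi$ the propagator is bounded by $\exp(C\sqrt{|\xi|})$ (Theorem~\ref{TechThm}), and for $|\xi|\gg T^2$ the interval $(0,T)$ falls into the ``small time'' regime of Section~\ref{sec:small}, so the bound is $\le\exp(CT)=:E$. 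One then chooses $\epsilon\ll (ED^2NT)^{-1}$ and the Duhamel term is negligible compared to the linear inflation. If instead the a~priori bound fails, the $\mathcal{G}_{C_1}$ norm already exceeds $D\epsilon>N\|U_0\|_{\mathcal{G}_{C_1}}$, which is the desired conclusion directly.

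Your direct bootstrap in a fixed Gevrey class $\mathcal{G}_{C_2}$ is more ambitious and runs into an obstacle you gloss over: the propagator loses a fixed Gevrey constant $C>C_*$ and $\mathcal{N}$ loses another, so to land back in $\mathcal{G}_{C_2}$ the forcing must be measured in $\mathcal{G}_{C_2+C}$---but you only control $U$ in $\mathcal{G}_{C_2}$. Your remedy (the solution stays supported at $\{m\xi_n\}$, so all Gevrey norms differ only by powers of $\exp(\sqrt{\xi_n})$) can be made rigorous, but it is really a Picard iteration tracking the spread to higher harmonics, not the single bootstrap inequality you write. You also implicitly need a \emph{lower} bound on $\|S_{0,t}g_n\|_{\mathcal{G}_{C_2}}$ at every intermediate $t\in[0,T_n]$ for your bootstrap hypothesis to close; the paper never needs this, since the contradiction argument only invokes the linear lower bound at the single final time $T$. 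The paper's $L^2$/dichotomy argument buys all this simplicity at the cost of a $T$-dependent constant, which is harmless because $\epsilon$ is chosen last.
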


We stress that this results considers the instability of the traveling waves and that the space with respect to which instability is established depends on the size $c$ of the wave.
A nonlinear instability result for the underlying stationary state \eqref{eq:groundstate} in the spirit of \cite{dengmasmoudi2018,bedrossian2016nonlinear,dengZ2019}
further requires that the size $c$ of the traveling is comparable to $\epsilon$.

\begin{proof}[Proof of Corollary \ref{cor:nl}]
  We argue by contradiction. Thus suppose that the nonlinear solution is
  uniformly controlled in $\mathcal{G}_{C_1}$ for all times:
  \begin{align*}
    \sup_{t>0} \|(w, j)\|_{\mathcal{G}_{C_1}} \leq D \epsilon.
  \end{align*}
  for some constant $D>0$.
  Given this a priori control of regularity we may consider the nonlinear
  equations as a forced linear problem
  \begin{align*}
    \dt (w, j) + L (w, j) = F
  \end{align*}
  where $L$ is the linear operator considered throughout this article and $F$ is
  the quadratic nonlinearity.
  If we denote by $S(t,\tau)$ the solution operator associated to $L$ it then
  follows that for any $T>0$
  \begin{align*}
    (w, j)_{t=T} = S(T,0) (w_0, j_0) + \int_0^T S(T, \tau) F(\tau) d\tau.
  \end{align*}
  By the norm inflation results of Theorem \ref{thm:main} for any $C_2<C_*$ there exists
  initial data and a time $T>0$ such that
  \begin{align}
    \label{eq:norminf}
    \|S(T,0) (w_0, j_0)\|_{L^2} \geq N \|(w_0, j_0)\|_{\mathcal{G}_{C_2}}. 
  \end{align}
  Since this estimate is linear after multiplication with a factor we may assume
  that this initial data also has size smaller than $\epsilon$.
  On the other hand, by the results of Section \ref{sec:small} and of Theorem \ref{thm:main}
  for any fixed time $T$, $S(T, \tau)$ is uniformly bounded as a map from $L^2$
  to $L^2$.
  More precisely, we recall that $S(T, \tau)$ decouples with respect to the 
  frequency $\xi$ in $y$.
  \begin{itemize}
  \item   For $\xi$ with $|\xi|\gg T^2$ by the results of
  Section \ref{sec:small} the time interval $(0,T)$ is considered ``small time''
  and hence $S(T,\tau)$ is bounded uniformly.
\item If instead $|\xi|\leq T^2$ then Theorem \ref{thm:main} provides an upper bound of the operator norm by $\exp(C \sqrt{\xi})\leq \exp(C T)$.
\end{itemize}
  Thus there exists an extremely large constant $E$ (depending on $T$) such that
  \begin{align*}
    \| \int_0^T S(T, \tau) F(\tau) d\tau\|_{L^2} \leq E \int_0^T \|F(\tau)\|_{L^2}d\tau.
  \end{align*}
  Finally, we note that by assumption
  \begin{align*}
    \|F(\tau)\|_{L^2} \leq D^2 \epsilon^2. 
  \end{align*}
  Hence, choosing $\epsilon\ll \frac{1}{E D^2N T }$ the Duhamel integral can be
  treated as a perturbation of \eqref{eq:norminf}, which concludes the proof.  
\end{proof}

\section*{Acknowledgements}
Funded by the Deutsche Forschungsgemeinschaft (DFG, German Research Foundation) – Project-ID 258734477 – SFB 1173.
This article is part of the PhD thesis of Niklas Knobel written under the supervision of Christian Zillinger.

\bibliography{library}
\bibliographystyle{alpha}

\end{document}